%% file: main-v2.tex
\documentclass[11pt, reqno]{amsart}
\usepackage[margin = 1in]{geometry}
\usepackage{graphicx, amssymb, amsmath, amsthm}
\usepackage[utf8]{inputenc}
\usepackage{slashed}
\usepackage{epsfig}
\usepackage{hyperref}
\usepackage{comment}
\usepackage{mathrsfs}
\numberwithin{equation}{section}
\usepackage{subfigure}
\usepackage{here}
\usepackage{float}
\usepackage{pifont}
\usepackage{enumitem}
\usepackage{mathtools}

\usepackage{tikz}
\usetikzlibrary{positioning}
\usetikzlibrary{arrows.meta}
\tikzset{
  implication/.style={
    double,
    double distance=1.5pt,
    -{Implies}
  }
}

\DeclareMathOperator{\Int}{Int}
\DeclareMathOperator{\End}{End}

\DeclareMathOperator{\sw}{sw}

\DeclareMathOperator{\id}{id}

\DeclareMathOperator{\proj}{proj}
\DeclareMathOperator{\colim}{colim}
\newcommand{\op}{\mathrm{op}}

\newcommand{\tr}{\operatorname{tr}}
\newcommand{\CP}{\mathbb{CP}}

\newcommand{\N}{\mathbb{N}}

\newcommand{\Q}{\mathbb{Q}}
\newcommand{\R}{\mathbb{R}}

\newcommand{\Z}{\mathbb{Z}}

\newcommand{\bc}{\mathbf{c}}

\newcommand{\cA}{\mathcal A}

\newcommand{\cC}{\mathcal C}

\newcommand{\cG}{\mathcal G}

\newcommand{\cZ}{\mathcal Z}

\newcommand{\fM}{\mathfrak M}
\newcommand{\fs}{\mathfrak s}

\newtheorem{theorem}{Theorem}[section]
\newtheorem{lemma}[theorem]{Lemma}
\newtheorem{corollary}[theorem]{Corollary}

\newtheorem{proposition}[theorem]{Proposition}
\newtheorem{question}[theorem]{Question}

\theoremstyle{definition}
\newtheorem{definition}[theorem]{Definition}
\newtheorem{remark}[theorem]{Remark}
\newtheorem{example}[theorem]{Example}

\begin{document}

\title{Uniformly Positive Scalar Curvature on Manifold Interiors}

\author[Shuli Chen]{Shuli Chen}
\address[Shuli Chen]{School of Mathematics, Institute for Advanced Study, 1 Einstein Dr, Princeton, NJ, 08540, United States}
\email{shulic@ias.edu}

\begin{abstract}
We obtain topological obstructions to the existence of a complete Riemannian metric with uniformly positive scalar curvature on the interiors of certain 4-manifolds using Seiberg--Witten theory. 
In particular, no punctured blowups of a homotopy K3 manifold admit complete metrics of uniformly positive scalar curvature. 

Further, let $X$ be a compact oriented smooth manifold of dimension $5$, $6$, or $7$. We show that if $\operatorname{Int}X$ admits a complete metric of uniformly positive scalar curvature, then $X$ admits a positive scalar curvature metric with minimal boundary. In dimensions $6$ and $7$, the metric may moreover be chosen to be a product near the boundary. In dimension 5, under additional hypotheses and after possibly changing the smooth structure on $X$, one can likewise obtain a positive scalar curvature metric that is a product near the boundary.
\end{abstract}

\maketitle

\section{Introduction}\label{Introduction}

Scalar curvature $R$ is the weakest curvature invariant of a Riemannian manifold $(X, g)$, but the existence of positive scalar curvature (abbreviated as PSC) metrics can place strong topological constraints. In this paper we are concerned with the existence of a complete Riemannian metric with uniformly positive scalar curvature $R_g \ge c >0$ on noncompact manifolds.

The foundational work of Gromov--Lawson \cite{GromovLawson1983Dirac} established many of the basic existence and obstruction results for such metrics. In dimension 3, Gromov \cite{Gromov2023FourLectures} and Wang \cite{Wang2025Topology} independently classified all complete connected orientable 3-manifolds admitting uniformly positive scalar curvature using $\mu$-bubbles. Earlier works of Chang--Weinberger--Yu \cite{ChangWeinbergerYu2010Taming} and Bessi\`eres--Besson--Maillot \cite{BessieresBessonMaillot2011Ricci} also established related results under additional hypotheses. In higher dimensions, various topological obstruction results for such metrics via $\mu$-bubbles were obtained by Chodosh--M\'aximo--Mukherjee \cite{ChodoshMaximoMukherjee2024UniformlyPositive} in dimension 4, by Sweeney \cite{Sweeney2026Positive} in dimension 5, and by Shi--Wang--Wu--Zhu \cite{ShiWangWuZhu2026Open} in dimensions at most 7. We also note that Block--Weinberger \cite{BlockWeinberger1999Arithmetic} gave a classification of finite-volume irreducible locally symmetric spaces admitting complete metrics of uniformly positive scalar curvature in terms of arithmeticity and $\Q$-rank. On the construction side, Das \cite{Das2026Constructions} established certain construction criteria and Chen--Jin--Jovanovic \cite{ChenJinJovanovic2026Positive} constructed complete metrics of uniformly positive scalar curvature on the product of three connected noncompact smooth manifolds. Deng \cite{Deng2026LocallyConformallyFlat} and Sweeney \cite{Sweeney2026Topology} independently constructed, in every dimension $n\geq 6$, a contractible open manifold not homeomorphic to $\R^n$ that admits a complete metric of uniformly positive scalar curvature.

Gromov \cite[Section 3.6.1]{Gromov2023FourLectures} conjectured that a manifold $X$ admits a complete metric of uniformly positive scalar curvature if and only if $X$ admits a complete metric of positive scalar curvature with at most $C$-quadratic decay for any $C > C_n$ where $C_n>0$ is a dimensional constant. This has been verified by the author \cite{Chen2026OptimalDecay} in dimension 3 for orientable manifolds, and subsequently by Wang and Zhu in dimensions 4 through 7 for orientable manifolds \cite{WangZhu2026QuadraticDecay} and then in all dimensions \cite{WangZhu2026Uniform} without the orientability assumption, with the sharp constant $C_n = \frac{n-1}{n}$. In particular, in their first paper, Wang and Zhu \cite[Proof of Theorem 1.3]{WangZhu2026QuadraticDecay} established the following useful deformation result. The dimension restriction is due to regularity of the $\mu$-bubbles used in the exhaustion. 
\begin{theorem}[\cite{WangZhu2026QuadraticDecay}]\label{thm: Wang--Zhu}
Let $4 \le n \le 7$. Suppose $X^n$ is a connected orientable noncompact $n$-manifold that admits a complete metric of uniformly positive scalar curvature. Then $X$ admits an exhaustion by compact smooth codimension-zero submanifolds $K_1 \subset K_2 \subset \dots \subset X$ and a complete metric $g$ of uniformly positive scalar curvature such that near each hypersurface $\Sigma_i \coloneq  \partial K_i$, the metric is of product form $g = g_{\Sigma_i} + dt^2$.
\end{theorem}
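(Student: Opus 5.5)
The plan is to build the exhaustion from $\mu$-bubbles and then deform the metric near each bubble into a product while keeping the scalar curvature uniformly positive. Start from a complete metric $g_0$ with $R_{g_0}\ge 2c>0$. Fix a basepoint $p$ and a smoothed distance function $\rho$ with $|\nabla\rho|\le 2$. For a sequence of radii $r_i\to\infty$, spaced at least some fixed width $L$ apart (with $L$ depending only on $c$ and $n$), consider the band $\{r_i\le \rho\le r_i+L\}$. On this band solve the $\mu$-bubble problem: minimize
\[
\mathcal{A}(\Omega)=\mathcal{H}^{n-1}(\partial^*\Omega)-\int_X(\chi_\Omega-\chi_{\Omega_0})h\,d\mathcal{H}^n .
\]
Here $h$ is built from $\rho$, blows up to $\pm\infty$ at the two ends of the band, and satisfies $\tfrac{n}{n-1}h^2-2|\nabla h|\ge -c$. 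This is the standard Gromov/Chodosh--Li choice, which is possible once $L\gtrsim \pi/\sqrt{c}$.

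Because $n\le 7$, the minimizer exists, and its boundary $\Sigma_i$ is smooth and compact. Set $K_i=\Omega_i$, the region it bounds together with the ball $\{\rho\le r_i\}$. Then $K_i$ is a compact smooth codimension-zero submanifold, $K_i\subset K_{i+1}$, and $\bigcup K_i=X$.

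The key consequence of stability is the following. The second variation of $\mathcal{A}$, combined with the choice of $h$ and the bound $R_{g_0}\ge 2c$, gives
\[
-\Delta_\Sigma\phi+\tfrac12\bigl(R_\Sigma-c\bigr)\phi\ \ge\ 0
\]
in the sense of the operator $-\Delta+\tfrac12(R_\Sigma-c')$ being positive, for some $c'>0$. That is, the first eigenvalue of $-\Delta_\Sigma+\tfrac12 R_\Sigma$ is at least $c/2$. With the usual choice of the first eigenfunction $u>0$, this makes $\Sigma_i$ carry a "warped" PSC structure: the metric $g_\Sigma+u^2dt^2$ on $\Sigma\times S^1$ or $\Sigma\times\R$ has scalar curvature at least $c$.

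The deformation step is local near each $\Sigma_i$, and the bubbles are disjoint and separated, so the deformations can be done independently. We want to replace $g_0$ on a collar of $\Sigma_i$ by a metric that equals $g_{\Sigma_i}'+dt^2$ near $\Sigma_i$, with $R\ge c''$ throughout. Proceed in three moves.

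\emph{Push to a warped product.} Since $\Sigma_i$ is stable for the $\mu$-bubble functional, use a Gromov--Lawson/Bär--Hanke type collar deformation to homotope $g_0$, near $\Sigma_i$, to a metric of the form $g_{\Sigma_i}+u^2dt^2$ on both sides. The mean curvature equals $h$, which is bounded on the bubble, and the uniform lower bound on the stability operator is what lets us bend the collar without losing positivity.

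\emph{Remove the warping.} Next pass from $g_\Sigma+u^2dt^2$ to a genuine product. Conformally change $g_\Sigma$ to $\tilde g_\Sigma=u^{4/(n-3)}g_\Sigma$ when $n-1\ge3$. The eigenvalue estimate then gives $R_{\tilde g_\Sigma}>0$, bounded below after rescaling by a constant depending only on $c$ and on $\sup u/\inf u$. Connect $g_\Sigma$ to $\tilde g_\Sigma$ through the path $u_s=u^s$ in the warped family, which is PSC throughout, and use the standard fact that a PSC-preserving path of warped metrics on $\Sigma$ can be realized as a PSC metric on a long cylinder $\Sigma\times[0,T]$. Taking $T$ large makes the $t$-derivatives small.

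\emph{Normalize and glue.} Finally, scale the whole collar so that $R\ge c''$ uniformly in $i$. Glue these collars into $g_0$ to obtain the metric $g$, which is complete and satisfies $R_g\ge c''>0$.

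The main obstacle is uniformity. Each deformation costs positivity depending on the geometry of $\Sigma_i$: bounds on $u$, the second fundamental form, and the collar length. If these degenerate as $i\to\infty$, then $\inf R_g$ could be $0$. The first thing to try is to make every estimate depend only on the quantity $\lambda_1(-\Delta+\tfrac12R_\Sigma)\ge c/2$, which is uniform. To do this, rescale constants by inserting cylinders of length depending on $\Sigma_i$, which is allowed because completeness only improves. Also apply Harnack-free arguments, such as the Kazdan--Warner/Schoen--Yau rearrangement in which $u$ enters only through $\Delta u/u$, so that only the eigenvalue lower bound $c/2$ controls the final scalar curvature.
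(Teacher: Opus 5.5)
Your architecture is reasonable, but the central deformation step is asserted rather than proved, and the mechanism you cite for it does not work. Some context first: the paper does not prove this statement. It quotes it from Wang--Zhu and only remarks that $n\le 7$ is needed for regularity of the $\mu$-bubbles in the exhaustion. So your first step is the expected starting point and is fine: a $\mu$-bubble exhaustion with $\lambda_1(-\Delta_{\Sigma_i}+\tfrac12R_{\Sigma_i})\ge c/2$.

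\textbf{The gap: ``push to a warped product''.} This step is the whole content of the deformation result. The problem is not that $h$ is bounded on $\Sigma$ but that it has no sign.
\begin{itemize}
\item Take the natural interpolation in Fermi coordinates, $\gamma(t)+dt^2\mapsto\gamma(\beta(t))+\psi^2dt^2$ with $\beta$ constant near $0$. The mean curvature of the slices, measured away from $\Sigma$, must go from $0$ to about $h$ on one side and from $0$ to about $-h$ on the other.
\item $R$ contains $-2\partial_sH$; in these coordinates this is the term $-2\beta''H$. On a collar of width $\delta$ it costs about $|h|/\delta$ wherever $h>0$ on the first side and wherever $h<0$ on the second.
\item Equivalently, gluing either side to a totally geodesic collar violates the corner condition unless $\pm h\ge0$.
\end{itemize}
B\"ar--Hanke type collar deformations lower mean curvature for free, but they never produce a product collar on their own. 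Compare $Y\times[-1,1]$ in the introduction, which has PSC with minimal boundary but no PSC metric that is a product near the boundary. Raising $H$ to $0$ must be paid for by positivity of the cross-sections, Gromov--Lawson bending style. That cost depends on the size of $h$ on $\Sigma$, not only on $c$.

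The bound $\lambda_1\ge c/2$ is too little information to pay it. To get that bound you replaced the second-variation terms $|A|^2+h^2+2\langle\nabla h,\nu\rangle$ by their worst-case bound $-c$. Those terms are exactly what records how $\Sigma$ sits in $(X,g_0)$. You need an argument that uses more than the intrinsic eigenvalue bound. For example:
\begin{itemize}
\item keep part of the $h^2$ term, with $h$ chosen accordingly;
\item attach collars with boundary data $(g_\Sigma,H)$, where $H$ (measured towards the collar) is at most $h$ on the $K_i$ side and at most $-h$ on the other, each ending in a product;
\item glue them by corner smoothing, rather than bending $g_0$ in place.
\end{itemize}
Your uniformity paragraph does not close this gap. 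Long cylinders help only once you are already in warped or product form. The transition out of $g_0$ happens in the uncontrolled geometry near $\Sigma_i$, where neither $\sup_{\Sigma_i}|h|$ nor $|A_{\Sigma_i}|$ is uniformly bounded.

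\textbf{The ``remove the warping'' step is wrong as written, but fixable.} With $-\Delta u+\tfrac12R_\Sigma u=\lambda_1u$, the metric $u^{4/(n-3)}g_\Sigma$ has scalar curvature $u^{-4/(n-3)}\bigl(\tfrac{4(n-2)}{n-3}\lambda_1-\tfrac{n-1}{n-3}R_\Sigma\bigr)$. This is negative wherever $R_\Sigma>\tfrac{4(n-2)}{n-1}\lambda_1$. Your path also fails: whatever family $u^s$ you intend, at $s=0$ it gives $g_\Sigma$ or $g_\Sigma+dt^2$, whose scalar curvature is $R_\Sigma$, not positive in general. So it is not ``PSC throughout''. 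The correct exponent is $2/(n-2)$. The $t$-independent metrics
\[
G_\alpha=u^{2\alpha/(n-2)}g_\Sigma+u^{2(1-\alpha)}dt^2,\qquad 0\le\alpha\le 1,
\]
satisfy $R_{G_\alpha}=u^{-2\alpha/(n-2)}\bigl(2\lambda_1+\tfrac{n-1}{n-2}\alpha(2-\alpha)|\nabla\log u|^2\bigr)$. After normalizing $\max u=1$, this is at least $2\lambda_1\ge c$, and the family joins the warped product ($\alpha=0$) to a genuine product ($\alpha=1$). Letting $\alpha$ vary slowly along a long cylinder then gives this step with a uniform bound. ``Scaling the whole collar'' is incompatible with gluing to $g_0$; uniformity has to come from normalized bounds like this one together with long necks.
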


In this paper, combining Theorem \ref{thm: Wang--Zhu} with Seiberg--Witten theory for cylindrical manifolds, we obtain the following new topological obstruction result in dimension 4:

\begin{theorem}\label{thm: dim 4}
Let $M$ be a closed, connected, oriented, smooth $4$-manifold with $b_2^+(M)>1$, and suppose the Seiberg--Witten invariant $\sw_M(\mathfrak s)\neq 0$ for some $Spin^{c}$ structure $\fs$. Suppose $X \subset M$ is a compact connected smooth codimension-zero submanifold with $b_2^+(X) > 0$. Then $\Int X$ doesn't admit a complete metric of uniformly positive scalar curvature.
\end{theorem}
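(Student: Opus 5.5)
Argue by contradiction: suppose $\Int X$ carries a complete metric with $R \ge c > 0$. Apply Theorem \ref{thm: Wang--Zhu} (with $n=4$; $\Int X$ is connected, orientable and noncompact) to obtain an exhaustion $K_1 \subset K_2 \subset \cdots$ of $\Int X$ and a complete metric $g$ with $R_g \ge c' > 0$ that is a product $g_{\Sigma_i} + dt^2$ near each $\Sigma_i = \partial K_i$. Since $H_2$ of $\Int X$ is the direct limit of the $H_2(K_i)$, we can choose $i$ so large that $b_2^+(K_i) \ge b_2^+(X) > 0$; concretely, a maximal positive definite subspace of $H_2(X)$ is spanned by finitely many classes represented in some compact $K_i$. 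Fix this $K = K_i$ and $\Sigma = \partial K$, a closed oriented $3$-manifold, possibly disconnected. The restriction of $g$ to a collar of $\Sigma$ is a product, so $g_\Sigma$ has $R_{g_\Sigma} \ge c' > 0$.

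Next, use $\Sigma$ to split $M$. Since $K \subset \Int X \subset M$, we have $M = K \cup_\Sigma W$ with $W = \overline{M \setminus K}$. Stretch the neck: let $M_T$ be $M$ with the collar $[-T,T]\times \Sigma$ inserted, carrying a metric equal to $g$ on $K$ plus the product neck, extended arbitrarily on $W$. The standard neck-stretching argument for Seiberg--Witten theory on cylindrical manifolds then applies. Because $b_2^+(M) > 1$, $\sw_M(\fs)$ is independent of the metric, so for every $T$ there is a solution of the Seiberg--Witten equations for $(M_T, \fs)$, possibly after a generic small perturbation. As $T \to \infty$, these solutions converge, after gauge and passing to subsequences, to finite-energy solutions on the cylindrical-end manifolds $K \cup [0,\infty)\times\Sigma$ and $W \cup (-\infty,0]\times \Sigma$, asymptotic to translation-invariant solutions on $\mathbb{R}\times\Sigma$, that is, to Seiberg--Witten monopoles on $(\Sigma, g_\Sigma)$.

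Since $R_{g_\Sigma} > 0$, the Weitzenböck formula forces every $3$-dimensional monopole to be reducible, with vanishing spinor and flat connection. Now use $b_2^+(K) > 0$ and the positivity of scalar curvature on all of $K$ together with its product end. On the piece $\hat K = K \cup [0,\infty)\times\Sigma$, which has $R \ge c' > 0$ everywhere, the Weitzenböck/maximum-principle argument, or an $L^2$ integration by parts using exponential decay to a reducible limit, forces any finite-energy solution to have $\Phi \equiv 0$. The limit on $\hat K$ is therefore a reducible, ASD-type solution, $F_A^+ = 0$ (or $F_A^+ = i\eta$ after perturbation), with flat boundary value. Because $b_2^+(\hat K) > 0$, and the relevant $L^2$ self-dual harmonic forms come from $H_2(K)$ positive classes and are not killed by the boundary, a generic self-dual perturbation $\eta$ supported in $K$ excludes reducibles. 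Hence for generic perturbations there are no solutions on $\hat K$ at all, so for large $T$ there are none on $M_T$, contradicting $\sw_M(\fs) \neq 0$. Here we use $b_2^+(M) > 1$ to make the perturbed invariant well defined and chamber-independent.

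The main obstacle is the compactness and gluing analysis in the third step: showing that solutions on $M_T$ genuinely converge, without energy loss, to finite-energy solutions on $\hat K$. This needs a uniform energy bound across the neck and control of the Chern--Simons--Dirac functional when $\Sigma$ has positive scalar curvature and possibly nontrivial $b_1(\Sigma)$. I would first try to quote the standard results for cylindrical-end manifolds whose cross-sections have PSC, where the monopoles on $\Sigma$ are isolated reducibles after a small perturbation, in the spirit of the proofs of the adjunction and Thom conjectures. The transversality argument for excluding reducibles using $b_2^+(K) > 0$ also needs care: one must check that the perturbation can be taken compactly supported in $K$ while pairing nontrivially with an $L^2$ self-dual harmonic form on $\hat K$.
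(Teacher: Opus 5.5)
Your overall strategy matches the paper's. You use Theorem \ref{thm: Wang--Zhu} to get a compact $K\subset\Int X$ with PSC that is a product near $\Sigma=\partial K$. You then set the existence of a finite-energy monopole on $\hat K$ (from neck-stretching inside $M$, using $\sw_M(\fs)\neq 0$) against its non-existence (Weitzenb\"ock kills $\Phi$, and $b_2^+(K)>0$ lets a compactly supported perturbation rule out reducibles). Your direct-limit argument for $b_2^+(K_i)\ge b_2^+(X)$ is correct. Intersection numbers are natural under inclusion, so lifts of a basis of a positive definite subspace of $H_2(X;\R)$ span a positive definite subspace of $H_2(K_i;\R)$. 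This is a slightly leaner substitute for Lemma \ref{lem: b_2^+}, which proves equality where only the inequality is used.

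The gap is the step you yourself call the main obstacle, which you defer to unspecified standard results. You have also misjudged what it needs: no ``no energy loss'', no Chern--Simons--Dirac control, no limit on the $W$ side, no asymptotics to $3$-dimensional monopoles, no exponential decay. You only need some finite-energy limit on $\hat K$, and that follows from an a priori bound. Fix $\eta$ compactly supported on the PSC side, away from the inserted neck. For a solution on the closed manifold $M_T$, Lemma \ref{lem: energy} gives $E_\eta(\Phi_T,A_T)=4\int_M|\eta^+|^2-4\pi^2\int_M c_\fs^2$, which is independent of $T$. The energy density is pointwise nonnegative except for $\frac R4|\Phi_T|^2$ where $R<0$, and that happens only on the fixed piece $W$. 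There, Lemma \ref{lem: key estimate} bounds $|\Phi_T|^2$ in terms of $\min R$ and $\|\eta^+\|_\infty$, neither of which depends on $T$. So the energy on $K$ together with the neck is bounded uniformly in $T$. Local compactness then gives, after gauge, $C^\infty_{\mathrm{loc}}$ convergence on $\hat K$, and the limit has finite energy by Fatou, since the density is nonnegative where $R>0$.

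Finite energy together with $R\ge R_0>0$ gives $\Phi,F_A\in L^2$, which is all the cutoff integration by parts needs to force $\Phi\equiv0$. This requires $\|\eta\|_\infty\le R_0/8$, a smallness condition you should state. Your second concern, about pairing a compactly supported perturbation with $\kappa$, is settled by truncating $\kappa$ itself. Take $\eta=\varepsilon\chi\kappa$, where $\kappa$ is a nonzero $L^2$ self-dual harmonic form on $\hat K$ and $\chi$ is a compact cutoff with $\int\chi|\kappa|^2>0$; absorb the finite end segment where $\chi\neq0$ into $K$ before stretching. By Lemma \ref{lem: torsion}, $c_1(\det\fs)|_\Sigma$ is torsion. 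Hence for a reducible, $\int_{\hat K}F_A^+\wedge\kappa=-2\pi i\langle c_1(\det\fs),[\kappa]\rangle$ is independent of $\varepsilon$, while the left side equals $-i\varepsilon\int\chi|\kappa|^2$. So all small $\varepsilon$ but one exclude reducibles. Finally, this $\eta$ need not be generic on $M_T$. The existence of a solution for exactly this $\eta$ comes from approximating it by generic perturbations and using compactness on $M_T$.
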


Here recall that for a compact oriented $4$-manifold $X$, $b_2^+(X)$ (respectively $b_2^-(X)$) is the dimension of a maximal positive-definite (respectively negative-definite) subspace of $H_2(X;\mathbb R)$ with respect to the intersection form. The above result applies in particular to $X = M \setminus \cup_i B_i$ where $B_i$'s are disjoint open balls. We note that $b_2^+(X) = b_2^+(M)$ (see Lemma \ref{lem: puncture} for a simple proof). Then $\Int X$ is diffeomorphic to the a punctured manifold $M \setminus \{p_1, \dots, p_k\}$. 

In \cite{ChodoshMaximoMukherjee2024UniformlyPositive}, Chodosh, M\'aximo, and Mukherjee showed that for some smooth structure, the once-punctured K3 manifold does not admit a complete metric of uniformly positive scalar curvature. Here, we can extend this nonexistence result to the puncture of any closed smooth homotopy K3 manifold. If $M$ is a closed oriented smooth 4-manifold which is homotopy equivalent to a K3 manifold, then $b_2^+(M) = 3$ and it has a $Spin^c$-structure such that the Seiberg--Witten invariant is nonzero \cite[Theorem 1.1]{MorganSzabo1997HomotopyK3}. Therefore we obtain 

\begin{corollary}\label{cor: K3}
Let $M$ be a closed smooth oriented 4-manifold which is homotopy equivalent to a K3 surface. Then $M \setminus \{p\}$ does not admit a complete metric of uniformly positive scalar curvature. 
\end{corollary}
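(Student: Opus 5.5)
The plan is to deduce Corollary \ref{cor: K3} directly from Theorem \ref{thm: dim 4}, taking $X = M \setminus B$ for a small open coordinate ball $B$ around $p$. Then $X$ is a compact, connected, smooth codimension-zero submanifold of $M$ with boundary $S^3$. Its interior $\Int X = M \setminus \overline{B}$ is diffeomorphic to $M \setminus \{p\}$: radially expand the annulus $\overline{B'} \setminus \{p\}$, where $B' \supset \overline B$ is a slightly larger ball, onto $\overline{B'} \setminus \overline B$, keeping the map equal to the identity near $\partial B'$. Since the existence of a complete metric of uniformly positive scalar curvature is invariant under diffeomorphism (pull back the metric), it suffices to show that $\Int X$ admits no such metric.

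Next I verify the hypotheses of Theorem \ref{thm: dim 4}. Since $M$ is homotopy equivalent to a K3 surface, it is closed, connected and oriented. Its intersection form is a homotopy invariant (up to the sign fixed by the orientation). After choosing the orientation for which the equivalence has degree one, the form is $2(-E_8)\oplus 3H$, so $b_2^+(M)=3>1$. Because the statement fixes an orientation, I should note that the cited result \cite[Theorem 1.1]{MorganSzabo1997HomotopyK3} is stated for the orientation matching K3's form, which is the natural reading of the hypothesis. With that orientation, the result gives a $Spin^c$ structure $\fs$ with $\sw_M(\fs)\neq 0$ (indeed $\pm1$ for the spin structure).

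Finally, $b_2^+(X)=b_2^+(M)=3>0$ by Lemma \ref{lem: puncture}. If I argue this directly, Mayer--Vietoris for $M = X\cup \overline B$ glued along $S^3$ gives $H_2(X)\cong H_2(M)$, and this isomorphism is compatible with the intersection forms, since classes in $X$ can be pushed off the ball. Theorem \ref{thm: dim 4} then applies and yields the corollary.

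The only potentially delicate point is the orientation issue in applying Morgan--Szab\'o. If $M$ carried the reversed orientation, $b_2^+(M)$ would be $19$. Seiberg--Witten nonvanishing for $\overline{M}$ is not provided by the cited theorem, so the hypothesis must be read with the K3-compatible orientation. Everything else is routine.
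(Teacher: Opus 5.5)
Your proposal is correct and matches the paper's argument: take $X=M\setminus B$, get $b_2^+(X)=b_2^+(M)=3$ from Lemma \ref{lem: puncture}, and feed the Morgan--Szab\'o nonvanishing result into Theorem \ref{thm: dim 4}. Two small corrections: the orientation point is not actually delicate, since whether $M\setminus\{p\}$ carries a complete metric of uniformly positive scalar curvature does not depend on orientation, so you may always reorient $M$ to make the homotopy equivalence have degree one; and Morgan--Szab\'o only show that the Seiberg--Witten invariant of the spin structure is odd, not that it equals $\pm1$ (knot-surgered K3 surfaces realize other odd values), though oddness of course still suffices.
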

\begin{remark}
During the preparation of this draft, Kumar and Zhang \cite[Theorem A]{KumarZhang2026Spin} showed that a closed spin four-manifold with nonzero $\hat{A}$-genus, even after being
punctured, does not admit a complete metric with uniformly positive scalar curvature. In particular, their result also gives Corollary \ref{cor: K3}.   
\end{remark}

Take any smooth homotopy K3 manifold $K$. Choose the orientation of $K$ so that $b_2^+(K) = 3$. For any $r \ge 1$, consider its $r$-fold blowup $M = K\#_r\overline{\CP}^{2}$. $M$ is nonspin, and homeomorphic (but not diffeomorphic to) $\#_3 \CP^2 \# (\#_{19+r}\overline{\CP}^2)$. We have $b_2^+(M) = b_2^+(K) = 3$ since blowups don't change $b_2^+$. The blowup formula (see e.g. \cite[Theorem 4.6.7]{Nicolaescu2000SeibergWitten}) then gives nonzero Seiberg--Witten invariants on $M$. Thus Theorem \ref{thm: dim 4} gives the following corollary not implied by \cite{KumarZhang2026Spin}:
\begin{corollary}
Take any smooth homotopy K3 manifold $K$ and for $r \ge 1$ consider its $r$-fold blowup $M = K\#_r\overline{\CP}^{2}$. Then $M \setminus \{p\}$ does not admit a complete metric of uniformly positive scalar curvature.  
\end{corollary}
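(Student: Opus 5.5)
The plan is to derive this corollary directly from Theorem \ref{thm: dim 4}, applied with $X = M \setminus B$, where $B$ is a small open ball around $p$. There are three things to check.

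First, the hypotheses on $M$. Since $K$ is a closed smooth oriented homotopy K3 manifold oriented so that $b_2^+(K)=3$, Morgan--Szab\'o \cite[Theorem 1.1]{MorganSzabo1997HomotopyK3} gives a $Spin^c$ structure $\fs_K$ with $\sw_K(\fs_K)\neq 0$. The manifold $M = K\#_r\overline{\CP}^{2}$ is closed, connected, oriented and smooth. Its intersection form is the orthogonal sum of that of $K$ and $r$ copies of $\langle -1\rangle$, so $b_2^+(M)=b_2^+(K)=3>1$. Let $E_1,\dots,E_r$ be the exceptional classes. Applying the blowup formula \cite[Theorem 4.6.7]{Nicolaescu2000SeibergWitten} once for each summand, the $Spin^c$ structure $\fs$ on $M$ with $c_1(\fs)=c_1(\fs_K)\pm E_1\pm\dots\pm E_r$ satisfies $\sw_M(\fs)=\sw_K(\fs_K)\neq 0$. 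Since $b_2^+>1$ throughout, there is no chamber dependence, so the formula applies cleanly at each step.

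Second, the choice of $X$. Fix $p\in M$ and a closed smooth coordinate ball $\bar B\ni p$, and set $X = M\setminus B$ with $B$ the open ball. Then $X$ is a compact connected smooth codimension-zero submanifold of $M$ with boundary $S^3$. By Lemma \ref{lem: puncture}, $b_2^+(X)=b_2^+(M)=3>0$. A direct argument also works: Mayer--Vietoris for $M = X\cup \bar B$, glued along $S^3$, shows that $H_2(X)\to H_2(M)$ is an isomorphism compatible with intersection forms.

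Third, the identification $\Int X \cong M\setminus\{p\}$. The interior is $M\setminus \bar B$, and $\bar B\setminus\{p\}\cong S^3\times(0,1]$ is a collar. Radially pushing out inside a slightly larger ball gives a diffeomorphism $M\setminus\bar B\cong M\setminus\{p\}$.

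Since admitting a complete metric of uniformly positive scalar curvature is invariant under diffeomorphism, Theorem \ref{thm: dim 4} now shows that $M\setminus\{p\}$ admits no such metric.

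No step here is a real obstacle; the only point needing care is invoking the blowup formula so that it produces a nonvanishing invariant on $M$, which is standard.
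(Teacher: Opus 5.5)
Your proposal is correct and follows essentially the paper's route. Morgan--Szab\'o and the blowup formula give a nonvanishing Seiberg--Witten invariant on $M$ with $b_2^+(M)=b_2^+(K)=3$, and Theorem \ref{thm: dim 4} is then applied to $X=M\setminus B$, using Lemma \ref{lem: puncture} (or your equivalent Mayer--Vietoris check) and $\Int X\cong M\setminus\{p\}$.
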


On the other hand, take $M=K3\#(S^2\times S^2)$. This is a spin manifold with $\hat{A}(M)=-\frac{\sigma(M)}{8} =2\ne 0$. Thus \cite[Theorem A]{KumarZhang2026Spin} implies $M \setminus \{p\}$ does not admit a complete metric of uniformly positive scalar curvature. However, since both summands have positive $b_2^+$, the connected sum theorem (see e.g. \cite[Theorem 4.6.1]{Nicolaescu2000SeibergWitten}) shows all the Seiberg--Witten invariants of $M$ vanish, so our Theorem \ref{thm: dim 4} doesn't imply this result. 

\begin{remark}
Note that such obstructions on punctured manifolds don't extend to connected sums in general, even when the summands are closed. Indeed, let $M$ be an oriented K3 manifold with the standard smooth structure, and we write $M^{\op}$ for $M$ with the opposite orientation. Then $M \# M^{\op} \cong \#_{22} (S^2 \times S^2)$ (see e.g. \cite[p. 558]{Reutter2023Semisimple}), and hence admits a metric of positive scalar curvature. 
\end{remark}

More generally, for a compact oriented manifold $(X^n, \partial X)$, we can investigate the relationship between several natural positive scalar curvature conditions:

\begin{enumerate}
\item \label{cond: product} $X$ admits a positive scalar curvature metric which is a product metric in a collar neighborhood of $\partial X$,
\item \label{cond: minimal} $X$ admits a positive scalar curvature metric for which $\partial X$ is minimal,
\item \label{cond: upsc} $\Int X$ admits a complete metric of uniformly positive scalar curvature $R \ge 1$,
\item \label{cond: psc} $\Int X$ admits a complete metric of positive scalar curvature $R > 0$.
\end{enumerate}

In general, the following implications are immediate: $\eqref{cond: product} \implies \eqref{cond: minimal}$ and $\eqref{cond: product} \implies \eqref{cond: upsc} \implies \eqref{cond: psc}$. 
We also note that Condition \eqref{cond: minimal} is known to be equivalent to many other conditions including the existence of a positive scalar curvature metric on $X$ with mean convex boundary, and a positive scalar curvature metric on $X$ with totally geodesic boundary \cite{BarHanke2023BoundaryConditions}. These lead naturally to the following questions.

\begin{question}\label{qn: 1}
Does Condition \eqref{cond: upsc} imply Condition \eqref{cond: product}?    
\end{question}

\begin{question}\label{qn: 2}
Are there any relationships between Conditions \eqref{cond: minimal} and \eqref{cond: psc}? 
\end{question}

We note that Question \ref{qn: 1} has a positive answer in dimension 3 by combining the surgery result of Gromov--Lawson \cite{GromovLawson1980Classification} and Schoen--Yau \cite{SchoenYau1979Structure} and the aforementioned classification result of 3-manifolds with a complete metric of uniformly positive scalar curvature of Gromov \cite{Gromov2023FourLectures} and Wang \cite{Wang2025Topology}.

For Question \ref{qn: 2}, we note that in dimension 3, for $g \ge 2$, the interior of a genus $g$ handlebody doesn't admit a complete metric of positive scalar curvature by the recent work of You and Zhang \cite{YouZhang2026Rigidity}, whereas the compact handlebodies admit a metric of positive scalar curvature with minimal boundary (see \cite{CarlottoLi2024Constrained} for a more general classification result).

In this paper, we give an affirmative answer to Question \ref{qn: 1} in dimensions 6 and 7. In addition, in dimension 5, we show that Condition \eqref{cond: upsc} implies the weaker Condition \eqref{cond: minimal}.
The proof is a combination of Theorem \ref{thm: Wang--Zhu} and high dimensional surgery theory.

\begin{theorem}\label{thm: dim 5, 6 and 7}
Let $n = 5, 6, 7$. Let $(X, \partial X)$ be a connected compact oriented manifold of dimension $n$ with non-empty boundary $\partial X$. Suppose $\Int X$ admits a complete metric of uniformly positive scalar curvature. Then $X$ admits a positive scalar curvature metric with minimal boundary. In dimensions $6$ and $7$, the metric may moreover be chosen to be a product near the boundary. 
\end{theorem}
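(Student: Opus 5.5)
Apply Theorem \ref{thm: Wang--Zhu} to $\Int X$ (connected, orientable, noncompact, $5\le n\le 7$). This gives an exhaustion $K_1\subset K_2\subset\cdots$ of $\Int X$ by compact codimension-zero submanifolds, and a complete metric $g$ with $R_g\ge c>0$ that is a product $g_{\Sigma_i}+dt^2$ near each $\Sigma_i=\partial K_i$. Restricting $g$ to $K_i$ and cutting inside the product collar gives a PSC metric on $K_i$ that is a product near $\partial K_i$. Condition \eqref{cond: product} therefore holds for every $K_i$. The problem is to transfer this to $X$. Fix a collar $\partial X\times[0,1)\subset X$, so that $\Int X\setminus \Int X_0 \cong \partial X\times(0,1]$ for a compact core $X_0\cong X$. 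For $i$ large, $K_i\supset X_0$, and $W_i:=K_i\setminus \Int X_0$ is a cobordism inside $\partial X\times(0,1)$ between $\partial X_0\cong\partial X$ and $\Sigma_i$.

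The key step is to see $X$ as obtained from $K_i$ by surgeries of controlled codimension and then apply Gromov--Lawson. Choose $j>i$ with $K_j\supset \partial X\times[\epsilon,1-\epsilon]\supset K_i\cup X_0$ (after reparametrizing). Then $\partial X\times[\epsilon',\epsilon]$ sits in $K_j\setminus \Int K_i$, and it splits the region between $\partial X_0$ and $\Sigma_j$. We want to modify the PSC metric on $K_j$ (or on $K_i$) to one on $X_0$.

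In dimensions $6$ and $7$, $\partial X$ and $\Sigma_i$ have dimension $5$ or $6$. Using a handle decomposition of the cobordism $W$ from $\Sigma_i$ to $\partial X$ relative to a level, we may trade handles (via the standard handle-trading and cancellation arguments, which need dimension $\ge 6$ for the Whitney trick) so that $W$ has handles only of index $2\le k\le n-3$. Such a handle decomposition is what is needed to push the product PSC metric across $W$ by the Gromov--Lawson surgery theorem, since each handle attachment is a surgery of codimension $\ge3$ on the boundary, performed while keeping a product collar. Concretely, first reduce to connected boundaries componentwise. Then do $1$-surgeries and low-index surgeries in the interior of $X_0$ to kill $\pi_1$ contributions: this uses that $\pi_1(W)\to\pi_1(\Sigma_i)$ and $\pi_1(\partial X)$ can be controlled because $W\subset\partial X\times(0,1)$ and $\partial X\times(0,1)$ retracts to both ends. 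Because $\partial X\times(0,1)$ deformation retracts onto $\partial X$, the inclusion $\partial X\to W\cup_{\Sigma}$ (the rest of $\partial X\times(0,1)$) is a homotopy equivalence. This gives enough connectivity to cancel the low- and high-index handles, and the result is a PSC metric on $X_0\cong X$ that is a product near the boundary.

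In dimension $5$, $\partial X$ is $4$-dimensional and handle cancellation fails. Instead, we take the PSC product metric on $K_i$ and attach $W'=X_0$-side region $(\partial X\times[\delta,\epsilon])\cap K_i^c$ in the weaker category. One option is to use Gromov--Lawson surgery only in codimension $\ge 3$ from the interior side together with the B\"ar--Hanke equivalence: a PSC metric with mean-convex (or minimal) boundary on a manifold is preserved under attaching handles of index $\le n-3$ relative to the boundary, and the remaining index-$(n-2)$ and $(n-1)$ handles can be absorbed on the "dual" side where they have low index. I expect the main obstacle to be this step: showing the cobordism between $\Sigma_i$ and $\partial X$ inside the collar admits a handle structure compatible with the minimal-boundary surgery results of \cite{BarHanke2023BoundaryConditions}, and, in dimensions $6,7$, carrying out the handle cancellation rigorously using the $h$-cobordism-type connectivity of $\partial X\times(0,1)$.
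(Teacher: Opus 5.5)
There is a genuine gap in both parts of your argument.

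\textbf{Dimensions $6,7$.} The step that fails is handle-trading the cobordism between $\Sigma_i$ and $\partial X$ so that, relative to $\Sigma_i$, only handles of index $\le n-3$ occur, which is what Gajer's theorem needs. Such a decomposition dualizes to one with handles of index $\ge 3$ relative to the $\partial X$-end. That forces the inclusion of the $\partial X$-end to be $2$-connected, and nothing guarantees this, since the $\mu$-bubbles $\Sigma_i$ have uncontrolled topology. For example, $K_i\cap(\text{collar})$ may be a product region with a tunnel drilled out (a $1$-handle with both feet on its outer slice). Then the region between $\Sigma_i$ and an outer slice is $(\partial X\times I)\cup h^1$, with fundamental group $\pi_1(\partial X)*\Z$ for connected $\partial X$, and relative to $\Sigma_i$ it needs an essential $(n-1)$-handle. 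The same can happen for your inner region $W_i$. The fact that the whole collar retracts onto $\partial X$ gives each piece a retraction $r$ onto $\partial X$, but not the needed connectivity of the piece. Also, surgeries in the interior of $X_0$ are not allowed, since they change $X$.

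The paper's Lemma~\ref{lem: psc product} supplies two ingredients you are missing. First, it uses $r$ to do interior $1$- and $2$-surgeries on the region $W$ between $Y\times\{0\}$ and $\Sigma$, killing $\ker\pi_1(r)$ and $\ker\pi_2(r)$ while tracking $\theta$-structures ($\theta$ the tangential $2$-type of $Y$). Only for the result $W_2$ is $Y\hookrightarrow W_2$ $2$-connected, so that Gajer's theorem gives a PSC metric on $W_2^{\op}$ extending $g_\Sigma$, product at both ends. Second, $C_2=W\cup_\Sigma W_2^{\op}$ is not the cylinder. Since $C_2$ is $\theta$-cobordant rel boundary to $W\cup_\Sigma W^{\op}$, the relative bordism result \cite[Proposition 2.13]{Frenck2026Doubling} transfers PSC to $Y\times[0,1]$ with the boundary germs fixed. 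One also first selects a connected component of $\Sigma_i\cap C_k$ that still separates \cite[Lemma 6.1]{Rade2023MuBubbles}.

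\textbf{Dimension $5$.} Here you give no actual argument. The proposed mechanism, absorbing index $n-2$ and $n-1$ handles ``on the dual side'', does not exist. Gromov--Lawson and Gajer only extend a PSC metric across handles of codimension $\ge 3$ attached on the side where the metric already lives, and in dimension $5$ there is no handle trading to control indices anyway. The missing idea is that minimality of the boundary is a local condition near a hypersurface, so nothing has to be pushed across a cobordism. The paper proceeds as follows:
\begin{enumerate}
\item Choose $i<j$ so that $A=K_j\setminus\Int K_i$ contains a slice $Z_k=Y_k\times\{t_k\}$ of each collar in its interior.
\item Since $g$ is a product near $\partial A$, the component $A_k\supset Z_k$ doubles to a closed PSC manifold $DA_k$.
\item The collar projection shows that the tangential $2$-type of $Z_k$ extends over $A_k$, hence over $DA_k$.
\item \cite[Lemma 5.3(i)]{Frenck2026Doubling} then changes the metric only in a thin tube around $Z_k$, making $Z_k$ minimal.
\end{enumerate}
Cutting $\Int X$ along the $Z_k$ gives a copy of $X$ with PSC and minimal boundary, uniformly for $n=5,6,7$.
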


This can be viewed as an improved version of 
\cite[Corollary 3.4]{ChodoshMaximoMukherjee2024UniformlyPositive} in dimensions 6 and 7, which shows $\partial X$ admits a Riemannian metric with positive scalar curvature under the same assumption.

In dimension 5, the counterexample in \cite[Remark 1.25]{Rosenberg2007ProgressReport} shows Question \ref{qn: 1} has a negative answer in general. Let $Y\subset\CP^3$ be a smooth complex hypersurface of odd degree $d\ge5$. Then $Y$ is simply connected and nonspin, with $b_2^+(Y)>1$ and a nonzero Seiberg--Witten invariant. Consequently, $Y$ does not admit a metric of positive scalar curvature, whereas $Y\times S^1$ does. Set $X=Y\times[-1,1]$. Lifting a positive scalar curvature metric on $Y\times S^1$ to the covering $Y\times\R\cong\Int X$ gives a complete metric of uniformly positive scalar curvature on $\Int X$. However, $X$ cannot admit a positive scalar curvature metric that is a product near the boundary, since $\partial X = Y \sqcup Y^{\op}$ does not admit a metric of positive scalar curvature. 

On the other hand, the works of Chodosh--M\'aximo--Mukherjee \cite{ChodoshMaximoMukherjee2024UniformlyPositive} and Kumar--Sen \cite{KumarSen2025ExoticStructures} suggest that the issue in dimension 5 might be only due to exotic smooth structures in dimension 4. 
In this spirit, using surgery theory, we prove that under additional hypotheses in dimension 5, we can give a positive answer to Question \ref{qn: 1}, if we allow a change of smooth structure. Recall that an orientable manifold is \emph{totally nonspin} if its universal cover does not admit a spin structure.

\begin{theorem}\label{thm: dim 5 homeo}
Let $(X, \partial X)$ be a compact connected oriented $5$-manifold with non-empty boundary $\partial X$. Assume $X$ is either spin or totally nonspin, and assume each component of $\partial X$ is simply connected.
Suppose $\Int X$ admits a complete metric of uniformly positive scalar curvature. Then there exists a smooth compact $5$-manifold $X'$, homeomorphic to $X$, such that $X'$ admits a positive scalar curvature metric which is a product near the boundary.  
\end{theorem}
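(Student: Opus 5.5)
We prove Theorem \ref{thm: dim 5 homeo} in three steps: use Theorem \ref{thm: Wang--Zhu} to produce a psc cobordism with product ends, replace its $4$-dimensional ends by homeomorphic standard manifolds, and then run the standard surgery argument for psc on cobordisms.

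\emph{Step 1: a psc piece with product ends.} Apply Theorem \ref{thm: Wang--Zhu} to $\Int X$. This gives an exhaustion $K_1\subset K_2\subset\cdots$ and a complete metric $g$ with $R_g\ge c>0$ that is a product $g_{\Sigma_i}+dt^2$ near each $\Sigma_i=\partial K_i$. Choose a collar $\partial X\times[0,1)\subset X$, and take $i\gg j$ large enough that $\Sigma_j$ and $\Sigma_i$ both lie in the end region $\partial X\times(0,1)$. Then $W=K_i\setminus\Int K_j$ is a compact cobordism between $\Sigma_j$ and $\Sigma_i$ inside $\partial X\times(0,1)$. It carries a psc metric that is a product near both ends, so $\Sigma_i$ and $\Sigma_j$ carry psc metrics.

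\emph{Step 2: replace $\partial X$ by a homeomorphic psc manifold.} By construction $K_i\supset X\setminus(\partial X\times(0,1))$, so $K_i$ is $X$ with a compact cobordism $\partial X\times[0,a]\cup W'$ attached along $\partial X$, where $W'$ is the region between $\partial X\times\{a\}$ and $\Sigma_i$. The aim is to trade $K_i$ for a manifold homeomorphic to $X$. Each component $Y$ of $\partial X$ is a simply connected closed $4$-manifold. It is spin exactly when $X$ is spin; in the totally nonspin case we may assume $Y$ is nonspin, which I expect to check from $\pi_1$-injectivity considerations on the universal cover (this step needs care). By Freedman, $Y$ is homeomorphic to some $Y'$ that is a connected sum of copies of $\CP^2$, $\overline{\CP}^2$, $S^2\times S^2$, and the $K3$ surface and its reverse with their orientations suitably corrected. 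When $Y$ has nonzero $\hat A$-genus, its spin boundary component $\Sigma_i$ would be spin-bordant to it, which is impossible because $\Sigma_i$ carries psc. So in the spin case the signature is zero, $Y'$ can be taken to be $\#(S^2\times S^2)$ or $S^4$, and $Y'$ carries psc; in the nonspin case $Y'=\#\CP^2\#\overline{\CP}^2$ carries psc. By Freedman's $5$-dimensional topological h-cobordism/collar arguments, there is a smooth structure $X'$ on $X$ whose boundary is $\partial X'=\sqcup Y'$: concretely, take a topological collar, use that smooth structures on $Y\times\R$ correspond to smooth structures on $Y$ in the stable range, and smooth $X$ relative to the new boundary via Kirby--Siebenmann theory, where the obstruction in $H^4(X,\partial X;\Z/2)$ can be killed by the choice of $Y'$ within its homeomorphism class.

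\emph{Step 3: surgery to get the product psc metric.} We now want a psc metric on $X'$ that is a product near $\partial X'=\sqcup Y'$. The tool is the Gromov--Lawson/Stolz bordism theorem for psc in the relative setting. A psc metric on $K_i$ with product boundary exists (restrict $g$), and $K_i$ and $X'$ are related by a relative bordism over the fundamental group. Since $\partial X$ is simply connected and $X$ is spin or totally nonspin, the relevant bordism groups are $\Omega^{\mathrm{spin}}_5(B\pi)$ or $\Omega^{SO}_5(B\pi)$. Choose a smooth cobordism $V$ from $\Sigma_i$ to $Y'$ that is $2$-connected relative to $Y'$, using simple connectivity together with the spin/totally nonspin hypothesis. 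By Gromov--Lawson surgery in codimension $\ge3$, the psc metric on $\Sigma_i$ extends across $V$ to a psc metric that is a product near $Y'$. Gluing then gives a psc metric, product near the boundary, on $K_i\cup V$, and the remaining task is to show that this manifold can be chosen homeomorphic to $X$. That requires arranging that $W'\cup V$ is topologically an h-cobordism over the simply connected $\partial X$, which is then topologically a product by Freedman; the h-cobordism can be arranged by surgery below the middle dimension on the simply connected $5$-dimensional cobordism. I expect this last step to be the main obstacle: simultaneously keeping the handle decomposition in codimension $\ge3$ relative to the psc end and achieving a topological product structure, while handling middle-dimensional $2$-handles in dimension $5$.
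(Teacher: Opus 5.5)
\textbf{The gap is Step 3.} As written it does not go through, and it carries the entire content of the theorem.

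First, $K_i$ and $X'$ are not related by a bordism relative to the boundary. We have $\partial K_i=\Sigma_i$ while $\partial X'=\sqcup Y'$, so no relative bordism or relative surgery theorem applies to this pair.

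Second, your fallback does not work in dimension $5$. You propose to extend $g_{\Sigma_i}$ across a cobordism $V\colon\Sigma_i\rightsquigarrow Y'$ with $(V,Y')$ $2$-connected. Gromov--Lawson/Gajer needs $V$ to be obtained from $\Sigma_i$ by attaching handles of index $\le 2$, equivalently from $Y'$ by handles of index $\ge 3$. Getting this from $2$-connectivity means trading the index-$2$ handles relative to $Y'$. That requires the Whitney trick for $2$-spheres in a $4$-dimensional level set, which is unavailable. This is exactly why Lemma~\ref{lem: psc product} is stated only for $n\ge 6$, and it is precisely where dimension $5$ differs from $6$ and $7$ in the paper. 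Also requiring $W'\cup V$ to be a topological h-cobordism only adds constraints to this local problem. So the step you flag as the main obstacle is not a technicality, and the tool you propose for it is not available.

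\textbf{The missing idea is to argue globally with closed $5$-manifolds,} where Theorem~\ref{thm: psc bordism} does hold. The paper proceeds as follows.
\begin{enumerate}
\item Theorem~\ref{thm: dim 5, 6 and 7} gives a PSC metric on $X$ with minimal boundary, so the double $DX$ has PSC.
\item It attaches smooth h-cobordisms $C_j\colon Y_j\rightsquigarrow Y_j'$ (Wall), which are topological products by Freedman. Hence $X'=X\cup C$ is homeomorphic to $X$. This replaces your Kirby--Siebenmann discussion, which is not correct as stated: smoothings of $Y\times\R$ do not correspond to smoothings of $Y$.
\item It caps $X'$ off with caps built only from handles of index $\le 2$. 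Spin components get $\natural_{k_j}(D^3\times S^2)$. All nonspin components together get a single connected, simply connected PSC cap $\hat Q$ (Lemma~\ref{lem: cap}), because only their total signature vanishes.
\item For the resulting closed manifold $N$, it shows that $[N,u]$ (or $[N,c]$ over $B(\pi*F_{|J|-1})$) equals $[DX,\hat u]$ plus classes with PSC representatives. The ingredients are $\Omega_5^{Spin}=0$, PSC representatives of $\Omega_5^{SO}$ (Gromov--Lawson), and the classes $[\CP^2\times S^1,\ell_i\circ\operatorname{pr}_{S^1}]$. Hence $N$ has PSC.
\item The caps are then removed by codimension-$\ge 3$ surgery (Proposition~\ref{prop: surgery}) or by Frenck's relative surgery lemma.
\end{enumerate}

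\textbf{Two further corrections.}
\begin{itemize}
\item Your reduction that, in the totally nonspin case, the boundary components may be assumed nonspin is false. For example, $(\CP^2\times S^1)\setminus B^5$ is totally nonspin, satisfies all the hypotheses, and has boundary $S^4$. So spin and nonspin boundary components must be handled simultaneously.
\item Identifying a nonspin $Y$ with $\#_p\CP^2\#_q\overline{\CP}^2$ requires Donaldson's theorem when the intersection form is definite.
\end{itemize}
Your $\hat A$-argument via the spin bordism between $Y$ and $\Sigma_i$ is fine.
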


We summarize the relationship between the curvature conditions in Figure \ref{fig: curvature-conditions}.

\begin{figure}[ht]
    \centering
    \input{conditions.tex}
    \caption{Relations among the curvature conditions.}
    \label{fig: curvature-conditions}
\end{figure}

This paper is organized as follows. In Section \ref{sec: dim 4} we focus on dimension 4. We first review basics of Seiberg--Witten theory, give some results on Seiberg--Witten theory for cylindrical manifolds, and prove Theorem \ref{thm: dim 4}. In Section \ref{sec: dim 5,6,7} we focus on dimensions 5, 6, 7. We first review basics of bordism theory and the language of tangential structures, and then prove prove Theorem \ref{thm: dim 5, 6 and 7}, and Theorem \ref{thm: dim 5 homeo}.

\subsection*{Acknowledgment}
The author thanks Otis Chodosh, Demetre Kazaras, Anubhav Mukherjee, and Qianhe Qin for discussions about the punctured K3 problem. This work is carried out while the author is a postdoctoral member at the Institute for Advanced Study, and the author thanks the Ambrose Monell Foundation for providing funding for the membership. The author also thanks the Institute for Advanced Study for providing access to ChatGPT Pro.

\subsection*{AI Use} The author used ChatGPT to help study and locate relevant references for Seiberg--Witten theory and bordism theory, and to proofread the manuscript. In particular, ChatGPT suggested following the approach of \cite{Kato2021HarmonicTheory}, and the idea of the proof of Theorem \ref{thm: dim 5 homeo}, in the case where the manifold $X$ is totally nonspin, was due to ChatGPT. This paper is not written by AI.

\section{Dimension 4}\label{sec: dim 4}

\subsection{Basics of Seiberg--Witten theory}
In this subsection we recall the basics of Seiberg--Witten theory. The exposition here mainly follows \cite[Chapters 1 and 2]{Nicolaescu2000SeibergWitten}, and we also refer to \cite[Chapters 1--4]{KronheimerMrowka2007Monopoles} and \cite[Sections 1--3]{HutchingsTaubes1999SeibergWitten}.

A $Spin^c$ structure on an oriented Riemannian $4$-manifold $(M,g)$ is a pair $\mathfrak{s} = (S,\bc)$, where the spinor bundle $S=S^+\oplus S^-$
is a $\mathbb Z_2$-graded Hermitian vector bundle with \(\operatorname{rank}_{\mathbb C}S^\pm=2\), and
$\bc \colon T^*M\longrightarrow\operatorname{End}_{\mathbb C}(S)$
is real-linear, odd, skew-adjoint Clifford multiplication satisfying
$$
\bc(\xi)\bc(\eta)+\bc(\eta)\bc(\xi)
=-2g^{-1}(\xi,\eta)\operatorname{id}_S.
$$
The Clifford multiplication naturally extends to $2$-forms by
$$
\bc(\xi\wedge\eta)=\tfrac12\bigl(\bc(\xi)\bc(\eta)-\bc(\eta)\bc(\xi)\bigr).
$$
The grading is chosen to agree with the orientation so that
$\bc(\alpha)|_{S^+}=0$ for every anti-self-dual $2$-form $\alpha$,
and $\bc(\beta)|_{S^-}=0$ for every self-dual $2$-form $\beta$.
An isomorphism of $Spin^{c}$ structures is a grading-preserving unitary bundle isomorphism over the identity of $M$ that intertwines Clifford multiplication.

We note that every oriented smooth Riemannian $4$-manifold $(M,g)$ admits a $Spin^c$ structure \cite{HirzebruchHopf1958Flaechenelemente,TeichnerVogt1994SpinC}. Here for simplicity we define a $Spin^c$ structure using a metric, but changing the metric gives a natural bijection between the isomorphism classes of $Spin^c$ structures. The set of isomorphism classes of  $Spin^{c}$ structures (when non-empty) is (noncanonically) in bijection with $H^2(M, \Z)$.

Let $\det(\mathfrak{s})$ be the determinant line bundle $\det(\mathfrak{s})=\det S^+\cong\det S^-$ of the $Spin^c$-structure $\mathfrak{s}$.

A unitary connection $A$ on $\det(\mathfrak{s})$, together with the Levi–Civita connection, induces a connection $\nabla^A$ on $S$ and a Dirac operator  $\slashed{D}_A \colon \Gamma(S^+)\longrightarrow\Gamma(S^-)$ given by 
$$
\slashed{D}_A\Phi=\sum_{j=1}^4\bc(e^j)\nabla^A_{e_j}\Phi,
$$
where $(e_j)_{j=1}^4$ is a local orthonormal frame and $(e^j)_{j=1}^4$ is its dual coframe.

Let $F_A$ denote the curvature 2-form $F_A \in i\Omega^2(M)$ on $\det(\mathfrak{s})$. It has self-dual part
$$
F_A^+=\tfrac12(F_A+*F_A)\in i\Omega^2_+(M).
$$

The Weitzenb\"ock formula for a positive spinor $\Phi\in\Gamma(S^+)$ is
\begin{equation}\label{eq: Weitzenbock}
\slashed{D}_A^{\,*}\slashed{D}_A\Phi
=(\nabla^A)^*\nabla^A\Phi
+\frac{R_g}{4}\Phi
+\frac12\bc(F_A^+)\Phi.   
\end{equation}

Denote by $\cA_{\mathfrak{s}}=\cA_{\mathfrak{s}}(M)$
the space of unitary connections on $\det(\mathfrak{s})$.
Also, denote by $c_{\mathfrak{s}}$ the first Chern class of
$\det(\mathfrak{s})$, i.e.,
$c_{\mathfrak{s}}=c_1(\det(\mathfrak{s}))$. By Chern--Weil theory,
$$(c_\fs)_{\mathbb R} = \left[\frac{i}{2\pi}F_A\right] \in H^2(M;\mathbb R).
$$

We can now define the configuration space
\[
\cC_{\mathfrak{s}}
=\cC_{\mathfrak{s}}(M)
=C^\infty(S^+_{\mathfrak{s}})\times\cA_{\mathfrak{s}}.
\]

Fix a perturbation parameter, which is a real $2$-form $\eta \in \Omega^2(M)$. 
A \emph{$(\mathfrak{s},\eta)$-monopole} is a configuration
$C=(\Phi,A) \in \cC_{\mathfrak{s}}$ satisfying the Seiberg--Witten equations
\begin{equation}\label{eq: SW}
\begin{cases}
\slashed{D}_A\Phi=0,\\
\bc(F_A^+ + i\eta^+)=\dfrac12 q(\Phi).
\end{cases}
\end{equation}
Here the superscript ``$+$'' denotes the self-dual part of a
$2$-form, and $q \colon S^+ \to \End (S^+)$ is defined by
\begin{align}\label{eq: q}
q(\Phi) \coloneq(\overline{\Phi}\otimes\Phi)_0
=\overline{\Phi}\otimes\Phi-\tfrac12|\Phi|^2\id_{S^+},   
\end{align}
where $\overline{\Phi}\otimes\Phi$ denotes the endomorphism $\Psi\mapsto\langle\Psi,\Phi\rangle\Phi$, and the subscript $0$ denotes the trace-free part. We take the Hermitian inner product to be linear in the first argument.

The energy of a configuration $C=(\Phi,A) \in \cC_{\mathfrak{s}}$ is defined as
\begin{align}\label{eq: energy}
E_\eta(\Phi,A)
= &\int_M \left(|\nabla^A\Phi|^2
+\frac{R_g}{4}|\Phi|^2
+\frac12\left|\frac12q(\Phi)-\bc(i\eta^+)\right|^2
+|F_A+2i\eta^+|^2
\right)\,dV_g,   
\end{align}
where for any endomorphism $T \colon S^+ \to S^+$ we have denoted $|T|^2\coloneq  \tr(TT^*)$.

Here we record some useful identities for the quantities we introduced:
\begin{lemma}[{\cite[Lemma 2.1.5]{Nicolaescu2000SeibergWitten}}]\label{lem: identities}
Let $\alpha\in i\Omega^2_+(M)$,
$\Phi\in C^\infty(S^+)$, and
$T\in\operatorname{End}_0(S^+)$, where $\operatorname{End}_0$ denotes the trace-free endomorphisms.
Then we have the following pointwise identities:
\begin{align*}
|q(\Phi)|^2
&\coloneq \tr\bigl(q(\Phi)^2\bigr)
= \frac{1}{2}|\Phi|^4,\\
|\bc(\alpha)|^2
&= 4|\alpha|^2,\\
\langle T,q(\Phi) \rangle
&\coloneq \tr\bigl(Tq(\Phi)\bigr)
= \langle T\Phi,\Phi \rangle.
\end{align*}  
\end{lemma}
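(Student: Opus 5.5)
These are pointwise linear-algebra identities, so I will check them in a local orthonormal frame of $S^+$ at a single point. At such a point $S^+\cong\mathbb C^2$. Clifford multiplication identifies $\Lambda^2_+\otimes\mathbb C$ with $\operatorname{End}_0(S^+)$, and real self-dual forms go to traceless skew-Hermitian endomorphisms. Hence for $\alpha\in i\Omega^2_+$, $\bc(\alpha)$ is traceless Hermitian. The plan is to treat the three identities in turn, using only the definition \eqref{eq: q} and an explicit Clifford model.

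\textbf{First identity.} Fix $\Phi\neq 0$ at a point and choose an orthonormal basis $\{\Phi/|\Phi|,\Psi\}$ of $S^+$. In this basis $\overline{\Phi}\otimes\Phi$ is $\operatorname{diag}(|\Phi|^2,0)$, so
\[
q(\Phi)=\operatorname{diag}\bigl(\tfrac12|\Phi|^2,-\tfrac12|\Phi|^2\bigr).
\]
Squaring and taking the trace gives $\tr(q(\Phi)^2)=\tfrac12|\Phi|^4$. The case $\Phi=0$ is trivial.

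\textbf{Third identity.} I expand
\[
\tr\bigl(T\,\overline{\Phi}\otimes\Phi\bigr)-\tfrac12|\Phi|^2\tr T .
\]
The second term vanishes because $T$ is traceless. The operator $T\circ(\overline{\Phi}\otimes\Phi)$ is $\Psi\mapsto\langle\Psi,\Phi\rangle T\Phi$, which has rank one. Its trace is therefore $\langle T\Phi,\Phi\rangle$, using the convention that the inner product is linear in the first slot. This gives $\tr(Tq(\Phi))=\langle T\Phi,\Phi\rangle$.

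\textbf{Second identity.} This is the step needing actual care: it depends on the normalization $\bc(\xi\wedge\eta)=\tfrac12[\bc(\xi),\bc(\eta)]$ and on the norm convention for 2-forms, namely $|e^1\wedge e^2|=1$. By linearity and $SO(4)$-equivariance, it suffices to check that the map $\alpha\mapsto\bc(\alpha)$ scales norms by the constant factor $2$, i.e.\ that $|\bc(\alpha)|^2=4|\alpha|^2$, on an orthonormal basis of $\Lambda^2_+$.
\begin{itemize}
\item Take $\omega_1=\tfrac{1}{\sqrt2}(e^1\wedge e^2+e^3\wedge e^4)$ and similarly $\omega_2,\omega_3$.
\item Since $\bc(e^1)\bc(e^2)$ squares to $-1$, we get $\bc(e^1\wedge e^2)=\bc(e^1)\bc(e^2)$.
\item On $S^+$ the operators $\bc(e^1\wedge e^2)$ and $\bc(e^3\wedge e^4)$ coincide, because $\bc$ kills anti-self-dual forms on $S^+$. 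So $\bc(\omega_1)=\sqrt2\,\bc(e^1e^2)|_{S^+}$.
\item $\bc(e^1e^2)|_{S^+}$ is a skew-Hermitian operator on $\mathbb C^2$ with square $-1$. Its eigenvalues are therefore $\pm i$, and the tracelessness of the image of $\Lambda^2_+$ forces one of each.
\item Thus $\bc(\omega_1)$ has eigenvalues $\pm\sqrt2\,i$, so $|\bc(i\omega_1)|^2=\tr(\bc(i\omega_1)\bc(i\omega_1)^*)=2+2=4$.
\end{itemize}
The same computation applies to $\omega_2$ and $\omega_3$. The images $\bc(\omega_j)$ are mutually orthogonal, since they anticommute (quaternion relations) and hence $\tr(\bc(\omega_j)\bc(\omega_k)^*)=0$ for $j\neq k$. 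This orthogonality gives $|\bc(\alpha)|^2=4|\alpha|^2$ for every $\alpha\in i\Lambda^2_+$. Alternatively, I would cite \cite[Lemma 2.1.5]{Nicolaescu2000SeibergWitten} directly, since the statement is taken from there.
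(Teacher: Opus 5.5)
Your proof is correct, and it takes the standard route: the paper gives no argument of its own and simply cites \cite[Lemma 2.1.5]{Nicolaescu2000SeibergWitten}, while you verify all three identities by direct pointwise computation (diagonalizing $q(\Phi)$ in a basis adapted to $\Phi$, the trace formula for the rank-one operator $T\circ(\overline{\Phi}\otimes\Phi)$, and an explicit orthonormal basis of $\Lambda^2_+$ whose images are orthogonal because they anticommute). The one fact you assert without proof is that $\bc(e^1\wedge e^2)|_{S^+}$ is traceless. It follows from the anticommutation with the invertible operator $\bc(e^1\wedge e^3)|_{S^+}$ that you already use, so the argument is self-contained.
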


Using integration by parts, the Weitzenb\"ock formula \eqref{eq: Weitzenbock}, and the elementary identities in Lemma \ref{lem: identities}, we have the following identity for the energy:
\begin{lemma}[{\cite[Proposition 2.1.4]{Nicolaescu2000SeibergWitten}}]\label{lem: energy}
Let $(M,g)$ be a closed, connected, oriented 4-manifold.  Then 
\begin{align*}
E_\eta(\Phi,A)
={}&
\int_M
\left(
|\slashed{D}_A\Phi|^2
+\frac12\left|
\bc(F_A^+ + i\eta^+)-\frac{1}{2} q(\Phi)
\right|^2
\right)\,dV_g\\
&+4\int_M|\eta^+|^2\,dV_g
-4\pi^2\int_M c_{\fs}^2.
\end{align*}
\end{lemma}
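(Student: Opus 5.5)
We expand both sides pointwise and compare; the only nonlocal inputs are integration by parts and Chern--Weil. Write $\bc_+ \coloneq \bc(F_A^++i\eta^+)$, a traceless skew-adjoint endomorphism of $S^+$.

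\emph{Spinor terms.} Integrating the Weitzenb\"ock formula \eqref{eq: Weitzenbock} against $\Phi$ over the closed manifold $M$ gives
\[
\int_M|\slashed{D}_A\Phi|^2=\int_M\Big(|\nabla^A\Phi|^2+\tfrac{R_g}{4}|\Phi|^2+\tfrac12\langle \bc(F_A^+)\Phi,\Phi\rangle\Big).
\]
This replaces the first two terms of \eqref{eq: energy} by $|\slashed{D}_A\Phi|^2-\tfrac12\langle\bc(F_A^+)\Phi,\Phi\rangle$.

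\emph{Quadratic terms.} Expand $\tfrac12|\bc_+-\tfrac12 q(\Phi)|^2$ using Lemma \ref{lem: identities}. We have $|\bc_+|^2=4|F_A^++i\eta^+|^2$ and $|q(\Phi)|^2=\tfrac12|\Phi|^4$. The cross term is $\langle \bc_+,q(\Phi)\rangle=\langle \bc_+\Phi,\Phi\rangle$; care with real parts and sign conventions is needed since $\bc_+$ is skew-adjoint while $q$ is self-adjoint. Expanding $\tfrac12|\tfrac12q(\Phi)-\bc(i\eta^+)|^2$ in the energy the same way, the $|\Phi|^4$ terms cancel, as do the $\langle\bc(i\eta^+)\Phi,\Phi\rangle$ terms. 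The $\langle\bc(F_A^+)\Phi,\Phi\rangle$ cross term cancels against the Weitzenb\"ock remainder above. This bookkeeping of the factors $\tfrac12$, $\tfrac14$ and signs is the step most likely to go wrong, so I would verify it with $\eta=0$ first.

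\emph{Curvature terms.} What remains is to compare $\int|F_A+2i\eta^+|^2+2|\bc(i\eta^+)|^2$ with $2\int|F_A^++i\eta^+|^2+4|\eta^+|^2-4\pi^2\int c_\fs^2$. Split $|F_A+2i\eta^+|^2=|F_A^++2i\eta^+|^2+|F_A^-|^2$ and use
\[
|F_A^-|^2=|F_A^+|^2-F_A\wedge F_A/dV.
\]
By Chern--Weil, $\int F_A\wedge F_A=-4\pi^2\int c_\fs^2$, since $F_A=-2\pi i\,c$ at the level of forms. The remaining $\eta$-dependent terms are pure algebra in $|F_A^+|^2$, $\langle F_A^+,i\eta^+\rangle$ and $|\eta^+|^2$, and should match once the normalizations are fixed.

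The main obstacle is purely the constant and sign bookkeeping. If the normalization in \eqref{eq: energy} does not match exactly, I would recheck whether $|F_A+2i\eta^+|^2$ is intended as stated and adjust using the identities in Lemma \ref{lem: identities}.
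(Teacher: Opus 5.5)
Your route is the standard one, and it matches what the paper indicates: integrate the Weitzenb\"ock formula, expand pointwise with Lemma~\ref{lem: identities}, split $F_A$ into self-dual and anti-self-dual parts, and apply Chern--Weil. However, this lemma consists of nothing but the bookkeeping, and you leave the decisive comparison at ``should match.'' Three of the formulas you do write down are wrong, and with them as written the argument does not close.

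\emph{Self-adjointness.} $F_A^++i\eta^+$ is $i\mathbb{R}$-valued, and real $2$-forms act skew-adjointly, so $\bc_+$ is \emph{self-adjoint} and traceless, just like $q(\Phi)$. Hence $\tr(\bc_+q(\Phi))=\langle\bc_+\Phi,\Phi\rangle$ is real. If $\bc_+$ were skew-adjoint, the real part of this cross term would vanish. Then nothing would cancel the Weitzenb\"ock remainder $\tfrac12\langle\bc(F_A^+)\Phi,\Phi\rangle$, contrary to what you assert.

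\emph{The $\eta$-term.} The energy gives $\tfrac12\bigl|\tfrac12q(\Phi)-\bc(i\eta^+)\bigr|^2=\tfrac1{16}|\Phi|^4-\tfrac12\langle\bc(i\eta^+)\Phi,\Phi\rangle+2|\eta^+|^2$. So the leftover is $\tfrac12|\bc(i\eta^+)|^2=2|\eta^+|^2$, not $2|\bc(i\eta^+)|^2=8|\eta^+|^2$.

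\emph{The sign in the curvature identity.} For an $i\mathbb{R}$-valued form the sign flips relative to the real case: $F_A\wedge F_A=(|F_A^-|^2-|F_A^+|^2)\,dV_g$. Combined with your (correct) $\int_M F_A\wedge F_A=-4\pi^2\int_M c_\fs^2$, this gives $\int_M|F_A^-|^2\,dV_g=\int_M|F_A^+|^2\,dV_g-4\pi^2\int_M c_\fs^2$.

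With your formulas as written, the two sides would differ by $6\int_M|\eta^+|^2\,dV_g+8\pi^2\int_M c_\fs^2$.

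With these corrections the identity holds exactly as stated. The $|\Phi|^4$ terms and the $\langle\bc(i\eta^+)\Phi,\Phi\rangle$ terms cancel, and the $\langle\bc(F_A^+)\Phi,\Phi\rangle$ cross term cancels the Weitzenb\"ock remainder. The curvature terms then reduce to
\[
\int_M\bigl(|F_A^++2i\eta^+|^2+|F_A^-|^2+2|\eta^+|^2\bigr)\,dV_g
=\int_M\bigl(2|F_A^+|^2+4\Re\langle F_A^+,i\eta^+\rangle+6|\eta^+|^2\bigr)\,dV_g-4\pi^2\int_M c_\fs^2,
\]
which is exactly $\int_M\bigl(2|F_A^++i\eta^+|^2+4|\eta^+|^2\bigr)\,dV_g-4\pi^2\int_M c_\fs^2$. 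So no renormalization is needed. Your fallback of ``adjusting'' the term $|F_A+2i\eta^+|^2$ is not available: the proof must verify the statement as given, and here that verification is the whole proof.
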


\vspace{1em}

A $(\mathfrak{s},\eta)$-monopole is called \emph{reducible} if $\Phi\equiv 0$, and \emph{irreducible} otherwise.

The gauge group \(\cG_\mathfrak{s}=C^\infty(M,U(1))\) acts on $\cC_{\mathfrak{s}}$ by
$$
u\cdot(\Phi,A)=\bigl(u\Phi, \, A-2u^{-1}du\bigr).
$$

We denote by $\cZ_\fs = \cZ_\fs(g, \eta)$ the set of solutions of the Seiberg--Witten equations. $\cZ_\fs$ is a
$\cG_\fs$-invariant subset of $\cC_\fs$.
The \emph{Seiberg–Witten moduli space} is defined as $\fM_\fs = \fM_\fs(g, \eta) = \cZ_\fs/\cG_\fs$.

\vspace{1em}

Now further assume that the oriented 4-manifold $(M,g)$ is closed and connected. Assume $b_2^+(M)>1$ and choose a homological orientation, i.e., an orientation of $H^1(M;\R)\oplus H^2_+(M;\R)$.

Here we record a key estimate used to show the compactness property of the moduli space. It is proven using Kato's inequality and the Weitzenb\"ock formula.

\begin{lemma}[{\cite[Lemma 2.2.3]{Nicolaescu2000SeibergWitten}}]\label{lem: key estimate}
Suppose $(\Phi, A) \in \cZ_\fs$. Then on $M$,
$$\|\Phi\|^2_\infty \le 2 \max(- \min_M R + 4\|\eta^+\|_\infty, \, 0).$$
\end{lemma}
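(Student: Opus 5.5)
The plan is the standard maximum-principle argument for the Seiberg--Witten equations \eqref{eq: SW}, based on the Weitzenb\"ock formula \eqref{eq: Weitzenbock} and the pointwise identities of Lemma \ref{lem: identities}. Since $M$ is closed, $|\Phi|^2$ attains its maximum at some point $x_0$. If $\Phi(x_0)=0$ then $\Phi\equiv 0$ and there is nothing to prove, so we assume $|\Phi(x_0)|>0$.

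\emph{Step 1: a differential identity for $|\Phi|^2$.} Write $\Delta = d^*d$ for the nonnegative Laplacian on functions. Since $\slashed{D}_A\Phi=0$, the Weitzenb\"ock formula gives
\[
0=(\nabla^A)^*\nabla^A\Phi+\tfrac{R_g}{4}\Phi+\tfrac12\bc(F_A^+)\Phi .
\]
Take the real part of the pointwise inner product with $\Phi$ and use $\mathrm{Re}\langle(\nabla^A)^*\nabla^A\Phi,\Phi\rangle=\tfrac12\Delta|\Phi|^2+|\nabla^A\Phi|^2$. This yields
\[
\tfrac12\Delta|\Phi|^2+|\nabla^A\Phi|^2+\tfrac{R_g}{4}|\Phi|^2+\tfrac12\langle \bc(F_A^+)\Phi,\Phi\rangle=0 .
\]

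\emph{Step 2: substitute the curvature equation.} The second equation of \eqref{eq: SW} gives $\bc(F_A^+)=\tfrac12 q(\Phi)-\bc(i\eta^+)$. Lemma \ref{lem: identities} gives $\langle q(\Phi)\Phi,\Phi\rangle=\tr(q(\Phi)^2)=\tfrac12|\Phi|^4$. Next we bound the perturbation term. The endomorphism $\bc(i\eta^+)$ is Hermitian (Clifford multiplication by real forms is skew-adjoint) and trace-free on the rank-two bundle $S^+$. Hence its eigenvalues are $\pm\lambda$, and $2\lambda^2=|\bc(i\eta^+)|^2=4|\eta^+|^2$ by Lemma \ref{lem: identities}. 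Therefore
\[
|\langle \bc(i\eta^+)\Phi,\Phi\rangle|\le \sqrt2\,|\eta^+|\,|\Phi|^2 .
\]
Combining, we obtain the pointwise inequality
\[
\tfrac12\Delta|\Phi|^2+\tfrac{R_g}{4}|\Phi|^2+\tfrac18|\Phi|^4-\tfrac{\sqrt2}{2}|\eta^+|\,|\Phi|^2\le 0 .
\]

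\emph{Step 3: evaluate at the maximum.} At $x_0$ we have $\Delta|\Phi|^2\ge 0$, since $\Delta=d^*d$ is minus the analyst's Laplacian. Dividing by $|\Phi(x_0)|^2/8>0$ gives
\[
|\Phi(x_0)|^2\le -2R_g(x_0)+4\sqrt2\,|\eta^+(x_0)|\le 2\bigl(-\min_M R+4\|\eta^+\|_\infty\bigr).
\]
Since the left side is nonnegative, we may replace the right side by its maximum with $0$, and this gives the claimed bound on $\|\Phi\|_\infty^2$.

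The only delicate points are bookkeeping. One must fix the sign convention of the Laplacian so that the inequality at the maximum points the right way. One must also get the constant in the bound for $\bc(i\eta^+)$ right. Our estimate gives $2\sqrt2$ in place of $4$, which is stronger than needed, so any harmless loss there is absorbed by the stated constant. No regularity issues arise, because monopoles are smooth by definition of $\cC_\fs$.
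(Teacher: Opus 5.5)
Your argument is correct and follows the same route as the proof the paper cites from Nicolaescu: pair the Weitzenb\"ock formula with $\Phi$, then apply the maximum principle at a maximum point. Working with $|\Phi|^2$ instead of $|\Phi|$ lets you simply drop $|\nabla^A\Phi|^2\ge 0$ rather than invoke Kato's inequality, and your eigenvalue bound $\sqrt2\,|\eta^+|$ for $\bc(i\eta^+)$ is sharper than the operator-by-Hilbert--Schmidt bound $2|\eta^+|$, which yields exactly the stated constant $4$.
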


For generic perturbations $\eta$, the moduli space $\fM_\fs(g, \eta)$ is a smooth, compact, oriented manifold, and contains no reducibles. Its dimension is
$$d(\fs)= \frac{c_1(\det \fs)^2[M]-2\chi(M)-3\sigma(M)}4. $$

If \(d(\fs)=0\), define
$$\sw_M(\fs,g,\eta) =\#\fM_{\fs}(g,\eta)$$
as the signed count of points.

More generally,
$$ \sw_M(\fs,g,\eta) = 
\begin{cases} \left\langle\Omega_{\fs}^{k}, [\fM_{\fs}(g,\eta)]\right\rangle, &d(\fs)=2k \ge 0,\\ 
0,& d(\fs) < 0 \text{ or odd}. 
\end{cases} $$
Here $\Omega_\fs \in H^2(\fM_{\fs};\mathbb Z)$ is the first Chern class of the universal line bundle restricted to a fixed point of $M$ and $\Omega_\fs^k$ is the $k$-fold cup product.

Then we have
\begin{theorem}[{\cite[Theorem 2.3.5]{Nicolaescu2000SeibergWitten}}]
Suppose $M$ is a closed, connected, oriented and homology-oriented
smooth $4$-manifold such that $b_2^+(M)>1$. Then the correspondence
\[
Spin^c(M)\ni\mathfrak{s}
\mapsto
\sw_M(\mathfrak{s},g,\eta)
=:\sw_M(\mathfrak{s})\in\mathbb{Z}
\]
is independent of the metric $g$ and the generic perturbation $\eta$ and
is an invariant under diffeomorphisms preserving both the orientation and the homology orientation, with the $Spin^c$ structure transported by pullback.    
\end{theorem}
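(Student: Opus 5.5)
The plan is the standard cobordism argument for the parametrized moduli space. Fix two pairs $(g_0,\eta_0)$ and $(g_1,\eta_1)$ for which $\fM_\fs(g_i,\eta_i)$ is smooth, compact and free of reducibles. Join them by a smooth path $\gamma(t)=(g_t,\eta_t)$, $t\in[0,1]$, and consider the parametrized moduli space
\[
\widetilde{\fM}_\fs(\gamma)=\bigcup_{t\in[0,1]}\fM_\fs(g_t,\eta_t)\times\{t\}.
\]
The goal is to choose $\gamma$ generically so that $\widetilde{\fM}_\fs(\gamma)$ is a compact, smooth, oriented manifold of dimension $d(\fs)+1$ with boundary $\fM_\fs(g_1,\eta_1)\sqcup\overline{\fM_\fs(g_0,\eta_0)}$. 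The universal line bundle extends over it, so $\Omega_\fs$ extends as well. Stokes' theorem (equivalently, the fact that the boundary of a compact oriented manifold is null-homologous) then gives $\langle\Omega_\fs^k,[\fM_\fs(g_0,\eta_0)]\rangle=\langle\Omega_\fs^k,[\fM_\fs(g_1,\eta_1)]\rangle$.

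The key steps, in order, are as follows.

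\textbf{Avoiding reducibles.} A reducible $(0,A)$ exists for $(g,\eta)$ exactly when $F_A^++i\eta^+=0$. Since $[\tfrac{i}{2\pi}F_A]=(c_\fs)_\R$ is fixed, this holds iff the $g$-harmonic self-dual projection of $2\pi c_\fs-\eta$ vanishes. This is an affine condition of codimension $b_2^+(M)$ in the space of perturbations, for each fixed $g$. Because $b_2^+(M)>1$, this wall has codimension $\ge 2$ in the parameter space, so a generic path misses it.

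\textbf{Compactness.} Lemma \ref{lem: key estimate} bounds $\|\Phi\|_\infty$ uniformly along $\gamma$, since $\min R_{g_t}$ and $\|\eta_t^+\|_\infty$ are uniformly controlled on the compact interval. The second equation then bounds $F_A^+$. After passing to Coulomb gauge, elliptic bootstrapping gives $C^\infty$ bounds modulo gauge, so $\widetilde{\fM}_\fs(\gamma)$ is compact.

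\textbf{Transversality.} Away from reducibles, the linearization of the Seiberg--Witten map, augmented by the variation in $\eta$, is surjective on the irreducible configurations. This is because varying $\eta^+$ already fills the cokernel in the curvature component, and a unique-continuation argument handles the spinor component. The Sard--Smale theorem, applied to the projection of the universal moduli space onto the space of paths, then shows that for a generic path $\gamma$ with the given endpoints, $\widetilde{\fM}_\fs(\gamma)$ is a smooth manifold of dimension $d(\fs)+1$, with the expected boundary.

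\textbf{Orientation.} The determinant line bundle of the deformation complex is trivialized by the homology orientation of $H^1(M;\R)\oplus H^2_+(M;\R)$. This choice is continuous in $t$, because the family of spaces $H^2_+$ for $g_t$ forms a bundle over $[0,1]$ that is canonically oriented once one fiber is. Hence the boundary orientations agree with the chosen orientations at $t=0$ and $t=1$.

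\textbf{Diffeomorphism invariance.} Let $f$ be a diffeomorphism preserving the orientation and the homology orientation. Pullback by $f$ identifies $\fM_{f^*\fs}(f^*g,f^*\eta)$ with $\fM_\fs(g,\eta)$ as oriented manifolds, and it carries the universal bundle to the universal bundle. Therefore $\sw_M(f^*\fs)=\sw_M(\fs)$, using the metric independence just established.

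The main obstacle I expect is the transversality step, together with the orientation bookkeeping. One must verify that perturbations by self-dual $2$-forms alone suffice to make the parametrized moduli space cut out transversely, including at irreducibles whose spinor vanishes on an open set; unique continuation for the Dirac operator is what rules this out. One must also verify that the homology orientation induces coherent orientations along the whole family. The reducible-avoidance step is where $b_2^+>1$ is genuinely used, but it is a straightforward codimension count.
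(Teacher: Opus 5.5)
Your proposal is correct and is the standard parametrized-moduli-space cobordism argument. The paper gives no proof of its own here; it quotes \cite[Theorem 2.3.5]{Nicolaescu2000SeibergWitten}, whose proof follows your outline: the reducible wall has codimension $b_2^+(M)\ge 2$, so a generic path avoids it; compactness comes from the $C^0$ bound of Lemma~\ref{lem: key estimate}; transversality comes from $\eta^+$-perturbations together with unique continuation; orientations are made coherent by the homology orientation; the universal line bundle extends over the cobordism; and diffeomorphism invariance follows from naturality under pullback.
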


\subsection{Seiberg--Witten theory for cylindrical manifolds}

The Seiberg--Witten theory for cylindrical manifolds is well-developed, and the reader can refer to \cite[Chapter 4]{Nicolaescu2000SeibergWitten} and \cite{KronheimerMrowka2007Monopoles} for details.

Let $W$ be a compact $4$-manifold with boundary. Let $Y =\partial W$ be its boundary.
Let $W^*$ be the noncompact Riemannian manifold obtained by attaching a copy of the cylinder $Y\times [0,\infty)$, identifying $\partial W$ with $Y \times \{0\}$: $W^* = W \cup_Y (Y\times [0,\infty))$. In the following we will consider $(W, g)$ with product metric $g_W = g_Y + dt^2$ near the boundary, and get the cylindrical completion $(W^*, g^*)$ by extending the product metric to the cylinder $Y\times [0,\infty)$. Given a $Spin^c$ structure $\fs_W$ on $(W, g)$ with product metric $g_W = g_Y + dt^2$ near the boundary, we use a product identification for the $Spin^c$ structure, and extend the product so we get a $Spin^c$ structure $\fs_{W^*}$ on $W^*$.

We consider the Seiberg--Witten equations \eqref{eq: SW} on the cylindrical completion $W^*$.
In this case the perturbation parameters will be compactly supported real 2-forms
$\eta \in \Omega^2_c(W^*)$, and we will consider finite-energy monopoles $(\Phi, A)$ where $E_\eta(\Phi, A) < \infty$.

\begin{lemma}\label{lem: torsion}
Let $W^* = W \cup_Y (Y\times [0,\infty))$ be a cylindrical $4$-manifold with a Riemannian metric $g_{W^*}$ that restricts to the product metric $g_Y + dt^2$ on the cylindrical end. Let $\fs_{W^*}$ be a $Spin^c$ structure on $W^*$ and let $L \coloneq \det(\fs_{W^*})$ be its determinant line bundle. If $A$ is a unitary connection on $L$ whose curvature $F_A$ is square-integrable on $W^*$, then, for every $t\ge 0$, $c_1 (L|_{Y \times \{t\}})\in H^2(Y;\mathbb Z)$ is a torsion class.
\end{lemma}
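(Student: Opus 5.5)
We argue by contradiction using Chern--Weil theory on the slices of the cylindrical end. Since $c_1(L|_{Y\times\{t\}})$ is the restriction of $c_1(L)$ and the slices $Y\times\{t\}$ are all isotopic inside $W^*$, the class is independent of $t$. It therefore suffices to show that its image in $H^2(Y;\R)$ vanishes, because the kernel of $H^2(Y;\Z)\to H^2(Y;\R)$ is exactly the torsion subgroup.

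By Poincaré duality on each closed component of the $3$-manifold $Y$, the real class $c:=c_1(L|_Y)_\R$ vanishes iff $\langle c,[\Sigma]\rangle=0$ for every closed oriented surface $\Sigma\subset Y$ (equivalently, iff $\int_Y c\wedge\omega = 0$ for all closed $1$-forms $\omega$). Fix a closed $1$-form $\omega$ on $Y$, and pull it back to the end via the projection $Y\times[0,\infty)\to Y$. By Chern--Weil,
\[
\int_{Y\times\{t\}} \tfrac{i}{2\pi}F_A\wedge\omega \;=\; \langle c\cup[\omega],[Y]\rangle =: \kappa,
\]
which is a constant independent of $t$, since $F_A|_{Y\times\{t\}}$ is closed and represents $c$.

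Integrate this identity over $t\in[T,T+1]$:
\[
|\kappa| = \left|\int_{Y\times[T,T+1]}\tfrac{i}{2\pi}F_A\wedge\omega\wedge dt\right|
\le \tfrac{1}{2\pi}\,\|\omega\|_{L^\infty}\,\mathrm{vol}(Y)^{1/2}\,\|F_A\|_{L^2(Y\times[T,T+1])}.
\]
Here we use that the metric is the product metric, so $|\omega\wedge dt|=|\omega|_{g_Y}$ and the volume of $Y\times[T,T+1]$ is $\mathrm{vol}(Y)$. Because $F_A\in L^2(W^*)$, the tail $\|F_A\|_{L^2(Y\times[T,T+1])}$ tends to $0$ as $T\to\infty$. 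Hence $\kappa=0$.

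Since $\omega$ was arbitrary, $c\cup[\omega]=0$ for all $[\omega]\in H^1(Y;\R)$. Nondegeneracy of the cup-product pairing $H^2(Y;\R)\times H^1(Y;\R)\to\R$ on each component then gives $c=0$.

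The main point requiring care is this duality step, together with the observation that the slice integral really is a constant. The constancy follows from Stokes' theorem, because $F_A\wedge\omega$ is closed on $Y\times\{t\}$ and its integral depends only on cohomology classes. The estimate itself is routine. For disconnected $Y$ the argument is applied componentwise.
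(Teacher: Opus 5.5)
Your argument is correct. Its skeleton matches the paper's: a nonzero real class $c_\R$ forces the slice curvature $\beta_t=F_A|_{Y\times\{t\}}$ to be uniformly large on every slice, which is incompatible with $F_A\in L^2$ on the end. Where you differ is in how you obtain the lower bound on each slice.

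The paper takes the harmonic representative $h$ of $c_\R$ and uses that it minimizes the $L^2$ norm in its de Rham class. This gives $\|\beta_t\|_{L^2(Y)}^2\ge 4\pi^2\|h\|_{L^2(Y)}^2>0$ for every $t$, and integrating over $t\in[0,\infty)$ gives $\|F_A\|_{L^2}=\infty$. You instead use Poincar\'e duality to pick a closed $1$-form $\omega$ with $\langle c_\R\cup[\omega],[Y]\rangle\neq 0$. You then bound this fixed number by Cauchy--Schwarz in terms of $\|F_A\|_{L^2}$ on unit slabs $Y\times[T,T+1]$, and let the $L^2$ tail go to zero.

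Your route needs no Hodge theory, only de Rham's theorem and Poincar\'e duality. Because your estimate is linear in $F_A$, it also goes through verbatim whenever $\|F_A\|_{L^1(Y\times[T,T+1])}\to 0$ along some sequence $T\to\infty$, e.g.\ for $F_A\in L^p$ with any $p\ge 1$. The paper's version is slightly shorter and gives the explicit linear growth $\int_{Y\times[0,T]}|F_A|^2\ge 4\pi^2T\|h\|_{L^2(Y)}^2$ of the curvature energy along the end.
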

\begin{proof}
Let $L_t=L|_{Y \times \{t\}}$ for each $t\ge 0$. All $L_t$ are topologically isomorphic, so $c:=c_1(L_t) \in H^2(Y;\mathbb Z)$ is independent of $t$. Write $\beta_t:=F_A|_{Y\times\{t\}}$, then $F_A=\beta_t+dt\wedge \alpha_t$ where
$\alpha_t\in i\Omega^1(Y)$.
The 2-form $\beta_t$ is closed and $\left[\frac{i}{2\pi}\beta_t\right]=c_{\mathbb R}\in H^2(Y;\mathbb R)$. If $c$ is not torsion, then $c_\R \neq 0$, so we can find a nonzero harmonic representative $h$ of $c_\R$ by Hodge theory. Then $\|\beta_t\|_{L^2(Y)}^2
\ge \|2\pi \,h\|_{L^2(Y)}^2 =:C>0$. Then integration yields 
$$\int_{Y\times[0,\infty)}|F_A|^2 dV = \int_0^\infty\int_Y
\bigl(|\beta_t|^2+|\alpha_t|^2\bigr)\,dV_Y\,dt
\ge
\int_0^\infty\|\beta_t\|_{L^2(Y)}^2\,dt
\ge
\int_0^\infty C\,dt
=\infty,$$
contradicting $F_A\in L^2$.    
\end{proof}

On the one hand, analogous to the closed case, uniformly positive scalar curvature together with $b_2^+(W)>0$ allows us to get a perturbation with no solutions:
\begin{proposition}
Let $W$ be a compact connected oriented $4$-manifold with $b_2^+(W)>0$, carrying a metric of positive scalar curvature that is a product near its boundary. Let $W^*$ be its cylindrical completion. For every fixed $Spin^c$ structure, there exists a smooth, compactly supported real 2-form $\eta$ such that no finite-energy $(\fs_{W^*}, \eta)$-monopoles exist on $W^*$.   
\end{proposition}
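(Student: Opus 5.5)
Fix the $Spin^c$ structure $\fs_{W^*}$ and let $L=\det(\fs_{W^*})$. The idea is to mimic the closed-manifold argument: scalar curvature forces $\Phi\equiv 0$, so every finite-energy monopole is reducible, and then a suitable perturbation makes even reducibles impossible.

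\emph{Step 1: no irreducibles.} Let $g^*$ be the cylindrical completion of the given metric. Its scalar curvature is bounded below by $R_0:=\min_W R_g>0$, because on the end it equals $R_{g_Y}$, which is positive by the product structure. For a finite-energy monopole I will run the Kato/Weitzenb\"ock argument behind Lemma \ref{lem: key estimate}. Taking the inner product of \eqref{eq: Weitzenbock} with $\Phi$ and using \eqref{eq: SW} gives
\[
\tfrac12\Delta|\Phi|^2+|\nabla^A\Phi|^2+\tfrac{R}{4}|\Phi|^2+\tfrac14|\Phi|^4-\tfrac12\langle \bc(i\eta^+)\Phi,\Phi\rangle=0 .
\]
If $\sup|\Phi|$ were attained, we would get $\|\Phi\|_\infty^2\le 2\max(-R_0+4\|\eta^+\|_\infty,0)$.

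The noncompact end is the main obstacle, since $\sup|\Phi|$ need not be attained. I would handle it as follows. Finite energy and elliptic estimates on unit cylinders $Y\times[T,T+1]$, where $\eta=0$, show that $\Phi$ is bounded. Moreover $\int_{Y\times[T,T+1]}(|\nabla^A\Phi|^2+|\Phi|^4)\to 0$ as $T\to\infty$, hence $|\Phi|\to 0$ at infinity. So either $\Phi\equiv0$ or $|\Phi|^2$ attains a positive maximum. Alternatively, I would integrate $|\Phi|^2\cdot(\text{identity})$ against cutoffs $\chi_T$, since the boundary terms vanish by the decay.

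I will choose $\eta$ with $\|\eta^+\|_\infty<R_0/4$. Then the maximum principle gives $\Phi\equiv 0$.

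\emph{Step 2: no reducibles.} A reducible solution satisfies $F_A^++i\eta^+=0$. Finite energy gives $F_A\in L^2$, so by Lemma \ref{lem: torsion} $c_1(L|_Y)$ is torsion. Hence $c_1(L)_\R$ lies in the image of $H^2(W,Y;\R)\to H^2(W;\R)$, i.e.\ it is represented by an $L^2$ harmonic form on $W^*$ (APS/Hodge theory on cylindrical ends). Write $\mathcal H^+$ for the space of $L^2$ self-dual harmonic forms on $W^*$; its dimension is $b_2^+(W)>0$. The relation $F_A^+=-i\eta^+$ then imposes a linear constraint. Pairing with any $\omega\in\mathcal H^+$ and integrating by parts gives
\[
\int_{W^*}\tfrac{i}{2\pi}F_A\wedge\omega=\langle c_1(L)_\R\cup[\omega],[W,Y]\rangle ,
\]
and the left side equals $\tfrac{1}{2\pi}\int\eta^+\wedge\omega$ by self-duality of $\omega$. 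The integration by parts is justified by exponential decay of $\omega$ and $F_A\in L^2$; the right-hand side is a topological constant $\kappa_\omega$, independent of $A$.

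\emph{Step 3: choosing $\eta$.} Fix a nonzero $\omega\in\mathcal H^+$. Choose a compactly supported $\eta$ such that $\int\eta^+\wedge\omega\neq 2\pi\kappa_\omega$, for instance a small multiple $\epsilon\,\chi\omega$ with $\chi$ a cutoff. This is possible because $\int\chi\omega\wedge\omega>0$ once $\chi\equiv1$ on a large set. Scaling is the subtle point: I need $\|\eta^+\|_\infty<R_0/4$, yet only $\int\eta^+\wedge\omega\neq2\pi\kappa_\omega$. Since the bad set of $\epsilon$ is a single value, a generic small $\epsilon$ satisfies both conditions. Then Step 1 rules out irreducibles and Step 2 rules out reducibles, so no finite-energy $(\fs_{W^*},\eta)$-monopoles exist.
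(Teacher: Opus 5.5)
Your proposal is correct. Steps 2 and 3 are the paper's argument: a reducible solution has $F_A^+=-i\eta^+$. Lemma \ref{lem: torsion} plus the $L^2$ pairing against a nonzero $L^2$ self-dual harmonic form turns this into a single affine condition on $\eta$; the paper cites \cite[Lemma 27.2.1]{KronheimerMrowka2007Monopoles} for that integration by parts. Then $\eta=\epsilon\chi\omega$, with $\epsilon$ small and avoiding the one bad value, violates the condition.

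The genuine difference is in Step 1. The paper uses no maximum principle. It takes the real part of the $L^2$ pairing of the Weitzenb\"ock identity with $\chi_L^2\Phi$, where $\chi_L$ cuts off on $Y\times[L,L+1]$ with $|d\chi_L|\le C$. It absorbs the cross term $2\chi_L\langle\nabla^A\Phi,d\chi_L\otimes\Phi\rangle$ into $\chi_L^2|\nabla^A\Phi|^2+|d\chi_L|^2|\Phi|^2$, and lets $L\to\infty$ to get $\tfrac{R_0}{8}\int_{W^*}|\Phi|^2\le 0$. This needs only $\Phi\in L^2$. That is immediate, because once $R\ge R_0>0$ every term of $E_\eta$ is nonnegative. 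This is essentially your stated ``alternative,'' and it buys freedom from any analysis at infinity.

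Your primary route transplants Lemma \ref{lem: key estimate} directly, but it costs you the decay $|\Phi|\to0$ at infinity, and your justification for that is too quick. Smallness of $\int_{Y\times[T,T+1]}(|\nabla^A\Phi|^2+|\Phi|^4)$ is $W^{1,2}$-type control, which does not give $C^0$ control in dimension four without further elliptic input. Your own identity supplies that input. On the end, where $\eta=0$ and $R>0$, it gives $\Delta|\Phi|^2\le 0$ in your sign convention, so $|\Phi|^2$ is subharmonic there. The mean value inequality on the bounded-geometry cylinder then gives $\sup_{B_{1/2}(x)}|\Phi|^2\le C\int_{B_1(x)}|\Phi|^2\to 0$, using $\Phi\in L^2$. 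With that added, your maximum-principle argument is complete.

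A minor point: with the paper's normalization the quartic term is $\tfrac18|\Phi|^4$, not $\tfrac14|\Phi|^4$; only its sign matters.
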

\begin{proof}
By our assumption there exists $R_0 > 0$ such that $R \ge R_0$ on $W^*$. Let $\eta \in \Omega^2_c(W^*)$ be an arbitrary compactly supported real 2-form such that $\|\eta\|_{L^\infty} \le \frac{R_0}{8}$. First we show that uniformly positive scalar curvature excludes irreducible solutions through the Weitzenb\"ock formula. 

Let $(\Phi, A)$ be a finite-energy $(\fs_{W^*}, \eta)$-monopole on $W^*$. It follows from \eqref{eq: energy}, $R \ge R_0>0 $ and $\eta \in \Omega^2_c(W^*)$ that $E_\eta(\Phi, A) < \infty$ implies $\int_{W^*}|\Phi|^2< \infty$ and $\int_{W^*} |F_A|^2 < \infty$.

Let $\chi \colon \R \to [0,1]$ be a smooth cutoff function such that $\chi(t) \equiv 1$ for $t \le 0$, and $\chi(t) \equiv 0$ for $t \ge 1$.

For each $L \ge 0$, let $\chi_L \colon W^* \to [0,1]$ be the cutoff function such that $\chi_L (t, y) = \chi(t-L)$ on $[L, L+1] \times Y$, $\chi_L \equiv 1$ on $W\cup(Y\times[0,L])$, and $\chi_L \equiv 0$ on $Y\times[L+1,\infty)$. Thus there exists $C >0$ independent of $L$ such that $|d\chi_L| \le C$.

By the Weitzenb\"ock formula \eqref{eq: Weitzenbock}, the Seiberg--Witten equations \eqref{eq: SW}, and \eqref{eq: q},
\begin{align*}
0 & = \slashed{D}_A^{\,*}\slashed{D}_A\Phi \\
& =(\nabla^A)^*\nabla^A\Phi
+\frac{R_g}{4}\Phi
+\frac12\bc(F_A^+)\Phi \\
& =(\nabla^A)^*\nabla^A\Phi
+\frac{R_g}{4}\Phi
+\frac12( \frac12q(\Phi) - \bc(i \eta^+))\Phi \\
& =(\nabla^A)^*\nabla^A\Phi
+\frac{R_g}{4}\Phi
+ \frac18 |\Phi|^2\Phi 
- \frac12\bc(i\eta^+)\Phi.
\end{align*}

Taking the real part of the $L^2$ inner product with $\chi_L^2 \Phi$ gives

\begin{align*}
0   & =   \Re \int_{W^*}   \langle \big((\nabla^A)^*\nabla^A\Phi
+\frac{R_g}{4}\Phi
+ \frac18 |\Phi|^2\Phi 
- \frac12\bc(i\eta^+)\Phi \big), \,  \chi_L^2 \Phi \rangle dV_g\\
& =  \Re \int_{W^*}  \chi_L^2|\nabla^A\Phi|^2 + 2  \chi_L \langle \nabla^A \Phi, d \chi_L \otimes \Phi\rangle 
+\frac{R_g}{4}\chi_L^2|\Phi|^2 + \frac{1}{8} \chi_L^2 |\Phi|^4 
 \\
  & \quad - \frac12 \chi_L^2\langle \bc(i\eta^+)\Phi, \Phi \rangle \\
  & \ge  \int_{W^*} - |d\chi_L|^2|\Phi|^2
+\frac{R_0}{4}\chi_L^2|\Phi|^2 + \frac{1}{8} \chi_L^2 |\Phi|^4 - \frac{R_0}{8} \chi_L^2 |\Phi|^2 \\
& \ge   - C\int_{[L, L+1] \times Y}|\Phi|^2
+\frac{R_0}{8} \int_{W^*}\chi_L^2|\Phi|^2   \\
 & \xrightarrow{L\to \infty} \frac{R_0}{8} \int_{W^*}|\Phi|^2.
\end{align*}
This shows for a solution $(\Phi, A)$, we must have $\Phi \equiv 0$.

Thus all solutions to \eqref{eq: SW} are reducible. Then \cite[Proposition 4.4.1]{Nicolaescu2000SeibergWitten} and \cite[Corollary 27.2.3]{KronheimerMrowka2007Monopoles}  further show that using $b_2^+(W) > 0$, we can choose a sufficiently small perturbation $\eta$ such that no solutions exist. Here for completeness we briefly sketch the argument. Since all solutions to \eqref{eq: SW} are reducible, we must have $F_A^+ = - i\eta^+$. By cylindrical Hodge theory, since $b_2^+(W) > 0$, there exists a nonzero square-integrable closed self-dual real 2-form $\kappa$ on $W^*$. By Lemma \ref{lem: torsion}, $F_A \in L^2$ implies that $c_1(\det \fs_{W^*})$ restricts to a torsion class on every cross-section of the end cylinder. Then by \cite[Lemma 27.2.1]{KronheimerMrowka2007Monopoles} we must have 
$$\int_{W^*} - i\eta^+ \wedge \kappa = \int_{W^*} F_A^+ \wedge \kappa = -2\pi i \langle c_1(\det \fs_{W^*}), [\kappa] \rangle.
$$
Here the pairing on the right-hand side is computed using a compactly supported closed representative of the real Chern class, and is independent of $\eta$.
We choose $L$ so that $\int_{W^*}\chi_L|\kappa|^2>0$.
Taking $\eta=\varepsilon\chi_L\kappa$, there is a unique $\varepsilon$ for the the above identity holds. We thus choose $\varepsilon$ sufficiently small so that $\|\eta\|_{L^\infty} \le \frac{R_0}{8}$, and $\varepsilon$ avoids that unique value. Then no solutions to the perturbed Seiberg--Witten equations \eqref{eq: SW} exist with this choice of $\eta$.
\end{proof}

On the other hand, using local compactness, we can stretch the neck to get a solution. This neck-stretching argument is inspired by the proof strategy of \cite[Theorem 1.3]{Kato2021HarmonicTheory}.

\begin{proposition}\label{prop: neck}
Let $M$ be a closed connected oriented smooth four-manifold with $b_2^+(M)>1$, and suppose $\sw_M(\mathfrak s)\neq 0$ for some $Spin^{c}$ structure $\fs$. Suppose $W \subset M$ is a compact codimension-zero submanifold with smooth boundary $Y$, and $W$ carries a metric of positive scalar curvature $g$ that is a product $g_Y + dt^2$ near its boundary. Let $(W^*, g^*)$ be its cylindrical completion and denote the product extension of $\fs|_W$ to $W^*$ by $\fs_{W^*}$. Then for any compactly supported real 2-form $\eta \in \Omega^2_c(W^*)$, there exists a smooth finite energy $(\fs_{W^*}, \eta)$-monopole on $W^*$.
\end{proposition}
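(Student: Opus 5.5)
Here is the plan: stretch the neck along $Y$ inside $M$ and extract a limit on $W^*$ from solutions on the stretched closed manifolds.

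\textbf{Step 1: the stretched metrics.} Write $M = W \cup_Y V$ with $V = \overline{M\setminus W}$. Choose a metric on $V$ that is a product $g_Y+dt^2$ near $\partial V = Y$ and matches $g$ on $W$. For each $L>0$ form the closed Riemannian manifold
\[
M_L = W \cup (Y\times[0,2L]) \cup V,
\]
which is diffeomorphic to $M$. Carry $\fs$ over to $M_L$ using the product identification on the neck.

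\textbf{Step 2: solutions on $M_L$.} Fix $\eta\in\Omega^2_c(W^*)$ and choose $L_0$ with $\operatorname{supp}\eta\subset W\cup (Y\times[0,L_0])$. For $L\ge L_0$ this lets us view $\eta$ as a form $\eta_L$ on $M_L$. Since $\sw_M(\fs)\ne 0$ and $b_2^+(M)>1$, the invariant is independent of metric and generic perturbation, so $\fM_\fs(g_L,\eta_L+\delta_L)\neq\emptyset$ for every generic perturbation $\delta_L$.
\begin{itemize}
\item I take $\delta_L$ supported in $V$ and smaller than $1/L$ in $C^k$. Genericity in the closed case can be achieved by perturbations supported on any open set, or one can simply allow tiny global perturbations that go to zero.
\end{itemize}
This gives monopoles $(\Phi_L,A_L)$ on $M_L$.

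\textbf{Step 3: uniform bounds on $W$ and the neck.} By Lemma~\ref{lem: key estimate}, $\|\Phi_L\|_\infty^2 \le 2\max\bigl(-\min R + 4\|\eta^+\|_\infty + o(1),\,0\bigr)$. The metrics agree on $V$ and on $W$, and the neck is a product, so $\min R$ is independent of $L$. Hence $\Phi_L$ is uniformly bounded in $L^\infty$. The curvature equation then bounds $F_A^+$ pointwise, and the standard local elliptic theory (Coulomb gauge on balls, bootstrapping) gives uniform $C^k$ bounds modulo gauge on compact subsets of $W\cup(Y\times[0,L))$.

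For the energy bound on $W^*$ I use Lemma~\ref{lem: energy}:
\[
E_{\eta_L}(\Phi_L,A_L) = 4\int|\eta_L^+|^2 - 4\pi^2 c_\fs^2,
\]
which is independent of $L$ up to $o(1)$. The integrand of \eqref{eq: energy} is pointwise nonnegative away from $\operatorname{supp}\eta$, in particular on $V$ and on the neck. By the pointwise bound on $\Phi$ it is also bounded below on the compact support of $\eta$. So the energy of $(\Phi_L,A_L)$ restricted to $W\cup(Y\times[0,L])$ is uniformly bounded.

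\textbf{Step 4: passing to the limit.} Take $L\to\infty$ along a sequence. Arzelà--Ascoli on an exhaustion by compact sets, together with a diagonal argument and patching of gauges (which is standard, as in \cite{KronheimerMrowka2007Monopoles}), gives a smooth limit $(\Phi,A)$ on $W^*$ solving the $(\fs_{W^*},\eta)$ equations. Fatou's lemma applied to the uniform energy bound on compact sets shows $E_\eta(\Phi,A)<\infty$.

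\textbf{Main obstacle.} The delicate step is the gauge patching across the exhaustion so that the limit is a connection on the fixed bundle $\det\fs_{W^*}$. I would handle it with temporal/Coulomb gauge on the product end and the $H^1$ gauge-fixing argument of \cite{KronheimerMrowka2007Monopoles}. A second point to check is that the energy of \eqref{eq: energy} is local and has nonnegative integrand wherever $\eta=0$, including regions where $R$ may be negative. On $W$ and the neck $R>0$, so nonnegativity there is clear. On $V$ we only need the total energy identity together with nonnegativity of the terms $|\nabla\Phi|^2$, $|q|^2$ and $|F|^2$, while the term $R|\Phi|^2$ is bounded using $\|\Phi\|_\infty$ and $\operatorname{vol}(V)$. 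That still gives a uniform bound on the $W$-side energy.
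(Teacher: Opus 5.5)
Your proposal is correct and follows the paper's proof essentially step for step. You stretch the neck inside $M$, obtain monopoles from $\sw_M(\fs)\neq 0$, and bound the $W$-side energy uniformly via Lemma~\ref{lem: energy}, using Lemma~\ref{lem: key estimate} to control the possibly negative $R|\Phi|^2$ term on $V$. You then pass to a limit on $W^*$ by local compactness with a diagonal/gauge-patching argument, and Fatou gives finite energy, just as in the paper.

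The differences are minor. You handle genericity with small perturbations $\delta_L$ supported in $V$ (or tending to zero), whereas the paper produces exact $(\fs,\eta)$-monopoles on each stretched $M$ by compactness. Your Step~3 claim that the energy integrand is nonnegative on $V$ is false as stated, since $R$ may be negative there. However, you correct this yourself in your last paragraph, exactly as the paper does.
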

\begin{proof}
Write $W^*= W \cup_Y (Y \times [0,\infty))$. Enlarging the compact core $W$ by a finite cylindrical
segment and retaining the notation $W$, we may assume $\eta \in  \Omega^2_c(W)$.

Let $V\coloneq  \overline{M \setminus W}$. Let $h$ be any metric on $V$ such that $h=dt^2+g_Y$ on a boundary collar.

For each $T \in \N$, write $W_T = W \cup_{Y \times \{0\}} (Y \times [0,T])$. Identify $M$ with $W_T \cup_{Y} V$. Let $g_T$ be the metric on $M$ such that 
$$g_T =\begin{cases}
g &\text{on $W$}\\
g_Y + dt^2 &\text{on $Y \times [0,T]$}\\
h &\text{on $V$.}\\
\end{cases}
$$
Then $g_T$ has positive scalar curvature on $W_T$. Notice that the $Spin^c$ structure $\fs$ of $M$ remains the same under these metrics. 

Since $\sw_M(\mathfrak s)\neq 0$, on each $(M, g_T)$, the Seiberg--Witten equations admit a solution for generic perturbations. Approximating $\eta$ by generic perturbations and applying Seiberg--Witten compactness on $(M,g_T)$ shows that $\sw_M(\fs)\neq 0$ guarantees the existence of a $(\fs,\eta)$-monopole $(\Phi_T,A_T)$.

By Lemma \ref{lem: energy}, 
$$E_{\eta}(\Phi_T, A_T) = 4\int_M|\eta^+|^2\,dV_{g_T}
-4\pi^2\int_M c_{\fs}^2,$$
so $E_{\eta}(\Phi_T, A_T)$ is uniformly bounded on $M$. 

Using Lemma \ref{lem: key estimate} to bound $\Phi_T$ in terms of $R$ and $\eta$, we see that restricting to $W_T$, we have
\begin{align*}
E_{\eta}(\Phi_T|_{W_T}, A_T|_{W_T}) & \le   E_{\eta}(\Phi_T, A_T) + \int_V \frac{\max(0, - R_h)}{4}|\Phi_T|^2 
dV_h \le C
\end{align*}
for some uniform constant $C$ depending on $(V,h)$ and $\eta$ but independent of $T$.

Then using Lemma \ref{lem: identities} and the elementary norm inequality $|a+b|^2\le2|a|^2+2|b|^2$, this in turn gives uniform upper bounds on $\|F_{A_T}\|_{L^2(W_T)}$, $\|\Phi_{T}\|_{L^4(W_T)}$ and $\|\nabla^{A_T} \Phi_{T}\|_{L^2(W_T)}$.

Since $E_{\eta}(\Phi_T, A_T)$, $\|F_{A_T}\|_{L^2(W_T)}$, $\|\Phi_T\|_{L^4(W_T)}$ and $\|\nabla^{A_T} \Phi_T\|_{L^2(W_T)}$ are uniformly bounded, we can apply the local compactness theory (see \cite[Theorem 5.2.1]{KronheimerMrowka2007Monopoles} and also \cite[Section 4.4.2]{Nicolaescu2000SeibergWitten}) to conclude that there is a sequence of smooth gauge transformations $u_T: W_T \to S^1$ such that after passing to a subsequence, the transformed configurations $u_T(\Phi_T, A_T)$ converge weakly to a configuration $(\Phi_\infty, A_\infty)$ on $W^*$ that solves \eqref{eq: SW}, and the convergence is strong in $C^\infty$ on every compact subset of $W^*$.

From the weak convergence we conclude that $E_\eta(\Phi_\infty, A_\infty)$ is bounded. Thus $(\Phi_\infty, A_\infty)$ is a desired $(\fs_{W^*}, \eta)$-monopole on $W^*$.
\end{proof}

Combining the previous two results, we thus get
\begin{proposition}\label{prop: obstruction}
Let $M$ be a closed, connected, oriented, smooth four-manifold with $b_2^+(M)>1$, and suppose $\sw_M(\mathfrak s)\neq 0$ for some $Spin^{c}$ structure $\mathfrak s$.
If $W\subset M$ is a compact connected smooth domain with $b_2^+(W)>0$, then $W$ cannot carry a metric of positive scalar curvature that is a product near the boundary.   
\end{proposition}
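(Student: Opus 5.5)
We argue by contradiction, combining the two preceding propositions. Suppose $W\subset M$ is a compact connected smooth domain with $b_2^+(W)>0$ carrying a metric $g$ of positive scalar curvature that is a product $g_Y+dt^2$ near $Y=\partial W$. Form the cylindrical completion $(W^*,g^*)$. Since $W$ is compact and $g$ has positive scalar curvature, $R_g\ge R_0>0$ on $W$ for some $R_0$. On the cylindrical end the scalar curvature equals $R_{g_Y}$, which is its value on the product collar inside $W$, so it is also at least $R_0$. Thus $g^*$ has uniformly positive scalar curvature, which is exactly what the first proposition of this subsection needs.

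Take the $Spin^c$ structure $\fs$ on $M$ with $\sw_M(\fs)\neq 0$, restrict it to $W$, and extend it as a product to obtain $\fs_{W^*}$. The first proposition, applied to the $Spin^c$ structure $\fs_{W^*}$, gives a compactly supported real $2$-form $\eta\in\Omega^2_c(W^*)$ for which no finite-energy $(\fs_{W^*},\eta)$-monopoles exist on $W^*$. This step uses $b_2^+(W)>0$ and the uniform positivity of the scalar curvature.

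On the other hand, $W\subset M$ satisfies the hypotheses of Proposition \ref{prop: neck}: $M$ is closed with $b_2^+(M)>1$ and $\sw_M(\fs)\neq 0$, and $W$ carries a positive scalar curvature metric that is a product near the boundary. That proposition holds for every compactly supported perturbation, so applying it to the same $\eta$ produces a smooth finite-energy $(\fs_{W^*},\eta)$-monopole on $W^*$. This contradicts the previous paragraph, so no such metric on $W$ can exist.

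The main point to check is that the two propositions are applied to the same data: the same cylindrical completion, the same product extension $\fs_{W^*}$ of $\fs|_W$, and the same notion of finite energy $E_\eta<\infty$. Both are stated for an arbitrary fixed $Spin^c$ structure and, in Proposition \ref{prop: neck}, for an arbitrary $\eta\in\Omega^2_c(W^*)$, so this matching is immediate. The only mild subtlety is whether "positive scalar curvature" in the hypotheses means uniformly positive on $W^*$; the product structure near the boundary and compactness of $W$ settle this, as shown in the first paragraph.
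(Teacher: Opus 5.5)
Your proposal is correct and is exactly the paper's argument: the paper proves this proposition by combining the preceding vanishing result (some compactly supported $\eta$ admits no finite-energy $(\fs_{W^*},\eta)$-monopoles when $b_2^+(W)>0$) with Proposition \ref{prop: neck} (every compactly supported $\eta$ admits one). Your check that $g^*$ has uniformly positive scalar curvature on $W^*$, using compactness and the product structure, is the same observation the paper makes at the start of the first proposition's proof.
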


\subsection{Proof of Theorem \ref{thm: dim 4}}

\begin{lemma}\label{lem: b_2^+}
Let $X$ be a compact connected oriented smooth $4$-manifold with boundary and suppose $W \subset X$ is a smooth connected codimension-zero submanifold such that $X\setminus W$ is contained in a collar neighborhood of $\partial X$. Then $b_2^+(W) = b_2^+(X)$.
\end{lemma}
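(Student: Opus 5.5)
The plan is to compare $W$ and $X$ through the intersection form, using the collar structure to control both inclusions. Fix a collar $c\colon \partial X\times[0,1)\hookrightarrow X$ with $c(\partial X\times\{0\})=\partial X$ and $X\setminus W\subset c(\partial X\times[0,1/2))$, say. Choose $\epsilon>0$ small and set $X_\epsilon = X\setminus c(\partial X\times[0,\epsilon))$. Then $X_{1/2}\subset W\subset X$, where $X_{1/2}$ is the complement of the half-open collar $c(\partial X\times[0,1/2))$. Both $X_{1/2}\hookrightarrow X$ and $X_{1/2}\hookrightarrow W$ will be used. The first is a homotopy equivalence, since we push along the collar, and it is orientation preserving.

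\textbf{Step 1: monotonicity under inclusion.} For compact oriented $4$-manifolds $A\subset B$ of codimension zero, I claim $b_2^+(A)\le b_2^+(B)$. The intersection form on $H_2(A;\R)$ is computed by representing classes by closed surfaces, or cycles, in $\Int A$ and intersecting them there. Pushing forward along $i_*\colon H_2(A;\R)\to H_2(B;\R)$ preserves intersection numbers, because transverse intersections in $A$ are the same points in $B$ with the same orientations. So $Q_B(i_*a,i_*b)=Q_A(a,b)$. If $P\subset H_2(A;\R)$ is positive definite, then $i_*$ is injective on $P$, since $Q_A(a,a)>0$ forces $i_*a\neq 0$, and $i_*P$ is positive definite for $Q_B$. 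Hence $b_2^+(A)\le b_2^+(B)$.

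\textbf{Step 2: squeeze.} Applying Step 1 to $X_{1/2}\subset W\subset X$ gives
\[
b_2^+(X_{1/2})\le b_2^+(W)\le b_2^+(X).
\]
The inclusion $X_{1/2}\hookrightarrow X$ is an isomorphism on $H_2$ and preserves the intersection form by Step 1, so $b_2^+(X_{1/2})=b_2^+(X)$. This gives equality throughout.

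\textbf{Main obstacle.} The one point needing care is the well-definedness of the intersection form on $H_2$ of a manifold with boundary, together with its compatibility with inclusion. The form is $Q(a,b)=\langle PD^{-1}(j_*a)\cup PD^{-1}(b),[A,\partial A]\rangle$, where $j_*\colon H_2(A)\to H_2(A,\partial A)$ and $PD^{-1}$ is inverse Lefschetz duality. To handle this cleanly I would use de Rham language instead. Represent $a$ by a compactly supported closed form $\alpha$ in $\Int A$, which is possible via $H^2_c(\Int A)\cong H_2(A)$, so that $Q_A(a,b)=\int_A\alpha\wedge\beta$. Extension by zero sends compactly supported forms on $\Int A$ to compactly supported forms on $\Int B$ and preserves the integral, which gives $Q_B(i_*a,i_*b)=Q_A(a,b)$ immediately.

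Connectedness of $W$ plays no real role in the argument.
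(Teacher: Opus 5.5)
Your proposal is correct and is essentially the paper's proof. The paper also squeezes $W$ between a shrunken copy $X'\subset W$ of $X$, obtained by removing part of the collar, and $X$ itself, and it proves monotonicity of $b_2^+$ under codimension-zero inclusions exactly as in your de Rham version: represent $H_2$ by $H^2_c$ of the interior and extend compactly supported forms by zero. The only cosmetic difference is that you get $b_2^+(X_{1/2})=b_2^+(X)$ from the isometric isomorphism induced by inclusion, whereas the paper simply notes that $X'$ is diffeomorphic to $X$.
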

\begin{proof}
By removing part of the collar, we can get $X' \subset X$ diffeomorphic to $X$ with $X' \xhookrightarrow{\iota_1} W \xhookrightarrow{\iota_2} X$.

By Poincar\'e duality, $H_2(X';\R) \cong H^2_c(\Int X'; \R)$.
We consider the intersection form extended to $\R$ coefficients $Q_{X'} \colon H^2_c(\Int X';\R) \times H^2_c(\Int X';\R) \to \R$ given by $Q_{X'}(a,b) = \int_{X'} a \wedge b$. Let $(\iota_1)_!$ denote extension by zero.
We compute
$$
Q_{X'}(a,b) = \int_{X'} a \wedge b  = \int_{W} (\iota_1)_!(a) \wedge (\iota_1)_!(b) = Q_W((\iota_1)_!(a), (\iota_1)_!(b)).
$$
Hence the image of any positive definite subspace of $H^2_c(\Int X';\R)$ remains positive in $H^2_c(\Int W;\R)$. Thus $b_2^+ (X') \le b_2^+(W)$.

Similarly we can show $b_2^+ (W) \le b_2^+(X)$. Since $X'$ and $X$ are diffeomorphic, $b_2^+ (X') = b_2^+ (X)$. We thus have $b_2^+(W) = b_2^+ (X)$ as desired.
\end{proof}

\begin{lemma}\label{lem: puncture}
Let $M$ be a closed connected oriented smooth $4$-manifold. Let $B_1, \dots, B_k$ be pairwise disjoint open balls in $M$. Let $U=\cup_{i=1}^k B_i$ and let $X = M \setminus B$. Then $b_2^+(M) = b_2^+(X)$.
\end{lemma}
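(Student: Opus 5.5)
The plan is to show that removing the balls does not change $H_2$ with real coefficients, nor the intersection form on it. Then the maximal positive-definite subspaces have the same dimension.

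First, compare homology using the Mayer--Vietoris sequence for $M = X \cup \overline{U}$. The intersection $X \cap \overline{U}$ is a disjoint union of $k$ copies of $S^3$ (after slightly thickening to open sets). Since $H_1(S^3)=H_2(S^3)=0$ and the closed balls are contractible, the relevant portion of the sequence reads
\[
0 = H_2\Bigl(\bigsqcup_k S^3;\R\Bigr) \to H_2(X;\R)\oplus H_2\Bigl(\bigsqcup_k \overline{B_i};\R\Bigr) \to H_2(M;\R) \to H_1\Bigl(\bigsqcup_k S^3;\R\Bigr)=0 .
\]
The inclusion therefore induces an isomorphism $\iota_*\colon H_2(X;\R)\to H_2(M;\R)$.

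Second, check that $\iota_*$ preserves intersection forms. I would work in the same de Rham/compactly supported language as the proof of Lemma \ref{lem: b_2^+}. By Poincar\'e duality, $H_2(X;\R)\cong H^2_c(\Int X;\R)$ and $H_2(M;\R)\cong H^2(M;\R)$. Under these identifications, $\iota_*$ corresponds to extension by zero $\iota_!\colon H^2_c(\Int X;\R)\to H^2(M;\R)$. Then $Q_M(\iota_! a,\iota_! b)=\int_M \iota_!a\wedge\iota_!b=\int_X a\wedge b = Q_X(a,b)$, exactly as computed before.

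Combining the two steps, $\iota_!$ is an isometric isomorphism of real quadratic spaces, so $b_2^+(X)=b_2^+(M)$.

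The only point needing care is the compatibility of $\iota_!$ with $\iota_*$ under Poincar\'e duality. Alternatively, one can avoid that compatibility altogether. The inequality $b_2^+(X)\le b_2^+(M)$ follows from the isometric map $\iota_!$ alone. For the reverse inequality, note that $H^2(M;\R)\to H^2(X;\R)$ is an isomorphism, by the analogous Mayer--Vietoris argument in cohomology. Moreover, $H^2_c(\Int X)\to H^2(X)$ is an isomorphism, because its kernel and cokernel come from $H^1(\partial X)$ and $H^2(\partial X)$, both zero for a union of $S^3$'s. Hence $\iota_!$ is surjective, so any positive-definite subspace of $H^2(M;\R)$ pulls back to a positive-definite subspace of $H^2_c(\Int X;\R)$ of the same dimension. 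I expect this identification of $\iota_!$ as an isomorphism to be the main (mild) obstacle, and the vanishing of $H^1(S^3)$ and $H^2(S^3)$ handles it.
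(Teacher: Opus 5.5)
Your proposal is correct and is essentially the paper's argument. The paper obtains $H^2_c(X;\Z)\cong H^2(M;\Z)$ from the long exact sequence of the pair $(M,U)$ plus excision (using $H^1(U)=H^2(U)=0$) and then simply asserts that this isomorphism identifies the intersection forms, whereas you reach the same isomorphism via Mayer--Vietoris and, by realizing it as extension by zero $\iota_!$ (which visibly preserves $\int a\wedge b$), supply the isometry justification the paper leaves implicit. In your fallback, the one step to make explicit is that restriction composed with $\iota_!$ is the forgetful map $H^2_c(\Int X;\R)\to H^2(X;\R)$; this is what makes $\iota_!$ bijective.
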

\begin{proof}
Since $H^1(U;\mathbb Z)=H^2(U;\mathbb Z)=0$, the long exact sequence of the pair $(M,U)$ gives an isomorphism
$H^2(M,U;\mathbb Z)\xrightarrow{\cong} H^2(M;\mathbb Z)$.

By excision,
$H^2(M,U;\mathbb Z)\cong H_c^2(M\setminus B;\mathbb Z)
=H_c^2(X;\mathbb Z)$. Hence
$H_c^2(X;\mathbb Z)\cong H^2(M;\mathbb Z)$.

This isomorphism identifies the integral intersection forms. Thus their positive indices agree, so $b_2^+(X)=b_2^+(M)$.
\end{proof}

\begin{proof}[Proof of Theorem \ref{thm: dim 4}]
Suppose to the contrary that $\Int X$ admits a complete metric of uniformly positive scalar curvature. Then by Theorem \ref{thm: Wang--Zhu}, $\Int X$ admits an exhaustion by compact smooth codimension-zero submanifolds $K_1 \subset K_2 \subset \dots \subset \Int X$ and a complete metric $g$ of uniformly positive scalar curvature such that near each hypersurface $\Sigma_i \coloneq  \partial K_i$, the metric is of product form $g = dt^2 + g_{\Sigma_i}$.

For large enough $i$, $\Int X \setminus K_i$ lies in a cylinder diffeomorphic to $\partial X \times (0, \infty)$. Fix such an $i$. Then $\Int X \setminus K_i$ lies in a collar neighborhood of $\partial X$, so Lemma \ref{lem: b_2^+} gives $b_2^+(K_i) = b_2^+(X) >0$. Then by Proposition \ref{prop: obstruction}, $K_i$ cannot carry a metric of positive scalar curvature that is a product near the boundary, a contradiction.
\end{proof}

\section{Dimensions 5, 6, 7}\label{sec: dim 5,6,7}
\subsection{Bordism and tangential structures}
We first present some basic facts about bordism groups. The reader can consult \cite[Chapter 8]{DavisKirk2001LectureNotes} for more details. 
We let $\Omega_n^{SO}$, respectively $\Omega_n^{Spin}$, denote the bordism group of $n$-dimensional closed manifolds equipped with an orientation, respectively a spin structure. Addition is given by disjoint union and inverse is given by reversing the orientation. More generally, for a topological space $Z$ and $G=SO$ or $G=Spin$, let $\Omega_n^G(Z)$ denote the bordism group of pairs $(M,f)$, where $M$ is an oriented, respectively spin, closed $n$-manifold and
$f \colon M \to Z$ is a continuous map. Under this notation, $\Omega_n^{G} = \Omega_n^G (*)$. Let $\Omega_n^{G,+}(Z)\subset \Omega_n^G(Z)$ be the subgroup consisting of those bordism classes which admit a representative $(M,f)$ such that $M$ carries a metric of positive scalar curvature. 

The bordism groups $\Omega_n^G(Z)$ form a generalized homology theory satisfying additivity \cite[Theorem 8.26 and Theorem 8.32]{DavisKirk2001LectureNotes}. For a based space \(Z\), define the reduced bordism group
$$
\widetilde\Omega^{G}_n(Z)
=
\ker \left(\Omega^{G}_n(Z)\longrightarrow\Omega^{G}_n(*)\right).
$$
The map here forgets the map to \(Z\). The basepoint inclusion splits it, giving
\begin{align}\label{eq: reduced bordism}
\Omega^{G}_n(Z) \cong \Omega^{G}_n (*) \oplus\widetilde\Omega^{G}_n(Z).
\end{align}
Then the reduced bordism groups satisfy the homotopy, exactness, suspension and additivity axioms. In particular, the suspension axiom gives the isomorphism
\begin{align}\label{eq: suspension}
\widetilde\Omega_n^{G}(Z)\cong
\widetilde\Omega_{n+1}^{G}(\Sigma Z) 
\end{align}
for all $n$, where $\Sigma Z = (Z\times [0,1])/(Z\times \{0\} \cup Z\times \{1\} \cup \{x_0\} \times [0,1])$ is the reduced suspension. The additivity axiom gives the following isomorphism for a wedge sum $Z =\bigvee_{j \in J} Z_j$
\begin{align}\label{eq: additivity}
\bigoplus_{j \in J} \widetilde\Omega_n^{G}(Z_j) \cong \widetilde\Omega_n^{G}(Z)
\end{align}
for all $n$.

We will use these bordism groups when $n$ is 4 or 5. 
It is well-known that $\Omega_4^{SO} \cong \Z$; the complex projective space $\CP^2$ is a generator, and the signature $\sigma(M)\coloneq  b_2^+(M) - b_2^-(M)$ of a closed oriented 4-manifold $M$ defines an isomorphism $\Omega_4^{SO} \to \Z$ (see e.g. \cite[p. 226]{DavisKirk2001LectureNotes}). In particular, a closed oriented 4-manifold bounds a compact oriented 5-manifold if and only if the signature is zero. 

Since $S^1$ is the suspension of $S^0$ and $\Omega_4^{SO}(S^0) \cong \Omega_4^{SO}(*) \oplus \Omega_4^{SO}(*)$, using \eqref{eq: suspension} and \eqref{eq: reduced bordism},
\begin{align}\label{eq: bordism S^1}
\widetilde\Omega^{SO}_5(S^1)
\cong
\widetilde\Omega^{SO}_4(S^0)
\cong
\Omega^{SO}_4 \cong \Z.
\end{align}
Geometrically, take a class $
[N^5,f]\in \widetilde\Omega^{SO}_5(S^1)$ such that $f \colon N\to S^1$ is a smooth representative of its homotopy class. Choose a regular value $p\in S^1$. Then $M=f^{-1}(p)$
is a closed oriented $4$-manifold. The suspension boundary map sends $[N,f]\mapsto [M]\in \Omega^{SO}_4$.
Thus the isomorphism is simply
$$[N,f]\mapsto [f^{-1}(p)].
$$
Since
$\Omega^{SO}_4\cong\mathbb Z$
via the signature, the class of $(N,f)$ is determined by $\sigma(f^{-1}(p))$.

Conversely, given a closed oriented $4$-manifold $M$, we can take $[M\times S^1, \operatorname{pr}_{S^1}]$ as a representative of the corresponding class in $\widetilde\Omega^{SO}_5(S^1)$. Since $\CP^2$ generates $\Omega^{SO}_4$, we find that $[\CP^2 \times S^1, \operatorname{pr}_{S^1}]$ generates the group $\widetilde\Omega^{SO}_5(S^1)$.

\vspace{1em}

We will need the following bordism theorem due to Gromov--Lawson \cite[Theorems B and C]{GromovLawson1980Classification} and Rosenberg \cite[Theorems 2.2 and 2.13]{Rosenberg1986CStarII}:
\begin{theorem}[see e.g. {\cite[Theorem 1.3]{Stolz2023Positive}}]\label{thm: psc bordism}
Let $M$ be a closed connected $n$-manifold, $n\ge 5$, with fundamental group $\pi$, and let
\[
u\colon M\to B\pi
\]
be the classifying map of the universal covering $\widetilde M\to M$. That is, $u$ induces an isomorphism on the fundamental group.
\begin{enumerate}
\item If $M$ is spin, then $M$ admits a metric of positive scalar curvature if and only if
\[
[M,u]\in \Omega_n^{Spin,+}(B\pi).
\]

\item If $M$ is oriented and $\widetilde M$ is nonspin, then $M$ admits a metric of positive scalar curvature if and only if
\[
[M,u]\in \Omega_n^{SO,+}(B\pi).
\]
\end{enumerate}    
\end{theorem}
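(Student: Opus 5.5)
The ``only if'' direction is immediate: if $M$ carries a metric of positive scalar curvature, then the pair $(M,u)$ is itself a representative of $[M,u]$, so the class lies in $\Omega_n^{G,+}(B\pi)$. For the converse I would follow the Gromov--Lawson strategy. Suppose $[M,u]=[N,f]$ with $N$ carrying a positive scalar curvature metric. Choose a bordism $(W^{n+1},F)$ with $\partial W = M\sqcup N^{\op}$ and $F\colon W\to B\pi$ restricting to $u$ and $f$. In the spin case $W$ is spin, compatibly with the spin structures on $M$ and $N$. The aim is to modify $W$ in its interior, keeping $M$ and $u$ fixed, until the pair $(W,M)$ is $2$-connected.

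\textbf{Step 1: surgery on the interior of $W$.} First I would make $F$ an isomorphism on $\pi_1$. Doing $0$-surgeries in the interior (connected sum with copies of $S^1\times S^n$ mapped to generators) makes $F_*$ surjective on $\pi_1$. Doing $1$-surgeries on loops in $\ker F_*$ then kills the kernel; this needs finitely many, since $\pi$ is finitely presented, being $\pi_1$ of the closed manifold $M$. In the spin case, the framings of these $1$-surgeries are chosen so that the spin structure extends. Next I would kill $\pi_2(W)$, or rather $\pi_2$ of the map $W\to B\pi\times BG$ relevant to the tangential structure, by $2$-surgeries on embedded $2$-spheres. These exist because $\dim W=n+1\ge 6$.

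\textbf{Step 2: trivial normal bundles and the nonspin hypothesis.} Such a sphere can be surgered only if its normal bundle is trivial, i.e.\ if $w_2(W)$ vanishes on it.
\begin{enumerate}
\item In the spin case this is automatic.
\item In the case where $\widetilde M$ is nonspin, I would only kill spheres on which $w_2$ vanishes. There is a sphere $S\subset\widetilde M$ with $\langle w_2,S\rangle\ne0$; projecting it to $M$ and adding its image to a given sphere corrects $w_2$, so every class in $\pi_2(W)$ can be represented modulo the image of $\pi_2(M)$ by spheres with trivial normal bundle.
\end{enumerate}
The upshot is that $\pi_1(M)\to\pi_1(W)$ is an isomorphism and $\pi_2(M)\to\pi_2(W)$ is surjective. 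Hence $(W,M)$ is $2$-connected.

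\textbf{Step 3: handles and the surgery theorem.} Since $(W,M)$ is $2$-connected and $\dim W\ge6$, handle cancellation (Smale's techniques via the Whitney trick, which is where $n\ge5$ is used) gives a handle decomposition of $W$ relative to $M$ with no handles of index $0$, $1$ or $2$. Turning it upside down, $W$ is built from $N$ by handles of index at most $(n+1)-3=n-2$. So $M$ is obtained from $N$ by a sequence of surgeries, each of codimension at least $3$. The Gromov--Lawson (and Schoen--Yau) surgery theorem then transports the positive scalar curvature metric from $N$ to $M$.

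I expect the main obstacle to be Step 2 in the totally nonspin case: arranging that $\pi_2(M)\to\pi_2(W)$ is onto using only spheres with trivial normal bundle, and checking that this does not spoil the $\pi_1$-isomorphism or the compatibility with $u$. Step 3 is a standard citation of handle cancellation and the surgery theorem.
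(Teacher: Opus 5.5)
The paper does not prove this theorem; it quotes it from Gromov--Lawson, Rosenberg and Stolz. Your argument is the standard proof from those sources, and it is correct: surgery below the middle dimension to make $(W,M)$ $2$-connected (with the $w_2$-correction by a sphere from $\widetilde M$ in the totally nonspin case), then handle cancellation, then the codimension-$\ge 3$ surgery theorem applied from $N$. The one detail you should make explicit is that you need to kill only $\pi_2(W,M)\cong\operatorname{coker}\bigl(\pi_2(M)\to\pi_2(W)\bigr)$, not all of $\pi_2(W)$, which need not be finitely generated over $\Z[\pi]$. The relative group is a finitely generated $\Z[\pi]$-module once $\pi_1(M)\to\pi_1(W)$ is an isomorphism, since the relative chain complex of universal covers is finite free with $H_0=H_1=0$, so finitely many $2$-surgeries suffice.
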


We next recall the language of tangential
structures used in the formulation of surgery principles for positive scalar curvature and boundary mean curvature; see e.g. \cite{CecchiniRadeZeidler2023Nonnegative, Frenck2023Nonspin,Frenck2026Doubling}.

Let $BO(n)$ denote the classifying space for real vector bundles of rank $n$. We call the map $f \colon X \to BO(n)$ corresponding to a vector bundle $V \to X$ the classifying map for $V$. The classifying map for $V \oplus \R \to X$ is given by the composition of the classifying map $f \colon X \to BO(n)$ with the natural map
$BO(n) \to BO(n + 1)$.

The map $X \to BO(n) \to BO \coloneq \colim_{n\to \infty} BO(n)$ into the colimit of these natural maps is called the stable classifying map for $V$. The space $BO$ classifies stable vector bundles. A map $f \colon X \to BO$ from a compact space lands in a finite stage $BO(n)$ of this colimit and
hence corresponds to (the stable isomorphism class of) a vector bundle.

\begin{definition}[tangential structure]
A \emph{(stable) tangential structure} is a map $\theta: B \to BO.$
Let $V\to X$ be a vector bundle with stable classifying map $f \colon X \to BO$. A $\theta$-structure on $V$ is a lift $\ell \colon X \to B$ of $f$ through $\theta$, i.e. $f=\theta\circ\ell$. 
A $\theta$-structure on a manifold $M$ is a $\theta$-structure on the tangent bundle $TM$.
\end{definition}
Standard examples for tangential structures are $B = BSO$ (orientations) and $B = BSpin$ (spin structures).

\begin{definition}[$\theta$-cobordism]
Let $\theta\colon B\to BO$ be a tangential structure.
A $\theta$-cobordism between closed $n$-manifolds
$M_0$ and $M_1$ equipped with $\theta$-structures is a compact
$(n+1)$-manifold $W$ with boundary $\partial W=M_0^{\op}\sqcup M_1$,
together with a $\theta$-structure on its tangent bundle
whose induced boundary structures agree with the prescribed
structures on $M_0$ and $M_1$.
Here the boundary structures are induced using
$TW|_{\partial W}\cong T(\partial W)\oplus\mathbb R$.

If $M_0$ and $M_1$ have identified boundaries $Y$, a
$\theta$-cobordism \emph{relative to the boundary} is a cobordism
with corners whose boundary faces are $M_0$, $M_1$, and
$Y\times[0,1]$, with the usual identifications along their
boundaries, and whose $\theta$-structure on the vertical face
$Y\times[0,1]$ is the product of the common boundary structure.
\end{definition}
\begin{example}
Let $Z$ be a topological space.
Let $B = BSO \times Z$, and let $\theta \colon B \to BO$ be the composition of the projection and the natural map $BSO \times Z \to BSO \to BO$. Then two closed oriented manifolds equipped with maps to $Z$ are $\theta$-cobordant if and only if they represent the same element of the bordism group $\Omega_n^{SO}(Z)$.

Similarly, if we let $B' = BSpin \times Z$, and let $\theta' \colon B' \to BO$ be the composition of the projection and the natural map $BSpin \times Z \to BSpin \to BO$, then two closed spin manifolds  equipped with maps to $Z$  are $\theta'$-cobordant if and only if they represent the same element of the bordism group $\Omega_n^{Spin}(Z)$.
\end{example}

\begin{definition}[tangential 2-type]
Let $M$ be a manifold. A tangential structure $\theta \colon B \to BO$ is called the \emph{(stable) tangential $2$-type} of $M$ if the tangent bundle $TM$ admits a $2$-connected $\theta$-structure $\ell_M \colon M \to B$ and $\theta$ is $2$-coconnected.

Here, $2$-connected means that $\ell_M$ induces isomorphisms on $\pi_0$ and $\pi_1$ and a surjection on $\pi_2$, while $2$-coconnected means that $\theta$ induces an injection on $\pi_2$ and isomorphisms on $\pi_k$ for all $k\ge 3$.
\end{definition}

Tangential 2-types always exist, as they are given by the second stage of the Moore--Postnikov tower for the classifying map $\tau \colon M \to BO$ of the stable tangent bundle $TM$ of $M$.

\begin{example}
Let $M$ be a connected, orientable manifold of dimension $d \geq 2$.
\begin{enumerate}
\item If $M$ is spin, take $B = B Spin \times B\pi_1(M)$. Then its tangential $2$-type $\theta \colon B \to BO$ is given by the composition
    of the projection and the natural map
    \[
    B Spin \times B\pi_1(M) \to B Spin \to BO.
    \]

\item If $M$ is totally nonspin, take $B = BSO \times B\pi_1(M)$. Then its tangential $2$-type $\theta \colon B \to BO$ is given by the composition of the projection and the natural map
    \[
    BSO \times B\pi_1(M) \to BSO \to BO.
    \]
\end{enumerate}    
\end{example}

\begin{definition}\label{def: extension}
Let $\Sigma\subset M$ be a hypersurface, and let $\theta\colon B \to BO$ be the tangential $2$-type of $\Sigma$. We say that the tangential $2$-type of $\Sigma$ \emph{extends to $M$} if there exist $\theta$-structures $\ell_\Sigma \colon \Sigma \to B$, 
$\ell_M \colon M \to B$
such that 
\begin{enumerate}
\item $\ell_\Sigma$ is $2$-connected,
\item $\ell_\Sigma$ is homotopic to $\ell_M\circ\iota$, where $\iota: \Sigma\hookrightarrow M$ is the inclusion.
\end{enumerate}
\end{definition}

\begin{example}\label{ex: product tangential structure}
Let $\theta\colon B\to BO$ be the tangential $2$-type of a closed manifold $\Sigma$, and let $\ell_\Sigma\colon \Sigma\to B$ be a $2$-connected $\theta$-structure. Then the tangential $2$-type of $\Sigma$ extends to the cylinder $C=\Sigma\times[0,1]$ by setting
$\ell_C=\ell_\Sigma\circ \proj_\Sigma \colon C \to B.$   

Consequently, if $W\subset \Sigma\times[0,1]$ is any codimension-zero submanifold that contains a slice $\Sigma\times \{t\}$, then the restriction of $\ell_C$ to $W$ gives a $\theta$-structure on $W$ that extends the tangential 2-type of $\Sigma$.
\end{example}

\subsection{Proof of Theorem \ref{thm: dim 5, 6 and 7}}
\label{subsec: dim 5,6,7}

We present a lemma analogous to \cite[Proposition 6.4]{Rade2023MuBubbles}.
\begin{lemma}\label{lem: psc product}
Let $n \ge 6$. Let $Y$ be a closed connected oriented manifold of dimension $n-1$ and $C = Y \times [0, 1]$. Let $\Sigma$ be a closed connected oriented hypersurface separating $Y \times \{0\}$ and $Y \times \{1\}$ in $C$. Suppose $C$ admits a metric $g$ of positive scalar curvature such that $g$ is a product $g_\Sigma + dt^2$ in a tubular neighborhood of $\Sigma$ avoiding the boundary. Then $C$ admits a metric $\widetilde{g}$ of positive scalar curvature such that $\widetilde{g} = g$ near $Y \times \{0\}$, and $\widetilde{g}$ equals a product $\widetilde{g}_{Y \times \{1\}} + dt^2$ in a collar neighborhood of $Y \times \{1\}$.
\end{lemma}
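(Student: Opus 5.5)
Cut $C$ along $\Sigma$. Let $C_0$ be the region between $Y\times\{0\}$ and $\Sigma$, and $C_1$ the region between $\Sigma$ and $Y\times\{1\}$. Since $g$ is a product near $\Sigma$, the restriction $g|_{C_0}$ is a psc metric on $C_0$, equal to $g$ near $Y\times\{0\}$ and equal to $g_\Sigma+dt^2$ near $\Sigma$. It therefore suffices to build a psc metric on $C_1$ that equals $g_\Sigma+dt^2$ near $\Sigma$ and is a product near $Y\times\{1\}$; gluing it to $g|_{C_0}$ along $\Sigma$ then gives $\widetilde g$. Equivalently, we need $C_1$ to be a psc cobordism from $(\Sigma,g_\Sigma)$ to some psc metric on $Y$ that is a product at both ends. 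The strategy is the Gromov--Lawson surgery/bordism approach with tangential structures, as in \cite[Proposition 6.4]{Rade2023MuBubbles}. We take the psc metric $g_\Sigma+dt^2$ on a collar of $\Sigma$ and push it across $C_1$ by handle attachments of codimension at least $3$.

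\textbf{Step 1 (tangential structure).} Let $\theta\colon B\to BO$ be the tangential $2$-type of $\Sigma$, with $2$-connected structure $\ell_\Sigma$. We want a $\theta$-structure on $C_1$ extending $\ell_\Sigma$. The natural candidate comes from $C=Y\times[0,1]$, using the projection to $Y$ together with a map $Y\to\Sigma$. It is not clear that such a map exists in general. Instead, I would use $C_0$: the inclusion $\Sigma\hookrightarrow C$ combined with $C\simeq Y$ gives maps relating $\Sigma$ and $Y$. Here I would apply the $2$-type language of Definition \ref{def: extension}. The key point is that we only need \emph{some} $\theta$-structure on $C_1$ whose restriction to $\Sigma$ is $2$-connected; the restriction to $Y\times\{1\}$ need not be $2$-connected.

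\textbf{Step 2 (handle decomposition).} Relative to $\Sigma$, the cobordism $C_1$ is $\theta$-bordant rel $\Sigma$ to one built from handles of index at most $n-3$, that is, of codimension at least $3$ measured from the $\Sigma$ side. This is the standard surgery-below-the-middle-dimension argument. Since $\ell_\Sigma$ is $2$-connected and $\dim C_1=n\ge 6$, we can do $\theta$-surgery on the interior of $C_1$ to make the map $C_1\to B$ $2$-connected. Then $\Sigma\hookrightarrow C_1$ is $2$-connected, so handles of index $0,1,2$ relative to $\Sigma$ can be cancelled. The dual handles relative to $Y\times\{1\}$ then have index at least $3$, so the handles seen from $\Sigma$ have index at most $n-3$.

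The actual route I would take avoids replacing $C_1$. Fix the handle decomposition of $C_1$ built up from the $\Sigma$ side. After the cancellation above, every handle has index $k\le n-3$, so its attaching sphere has codimension at least $3$ in the level set. The Gromov--Lawson/Schoen--Yau surgery theorem, in its cobordism form due to Gajer and Walsh, then extends the psc metric $g_\Sigma+dt^2$ across each handle. The result is a psc metric on $C_1$ that is a product near both ends. For the cancellation to take place inside $C_1$ itself, rather than in a bordant manifold, we need $(C_1,\Sigma)$ to be $2$-connected. This cannot be arranged by surgery without changing $C_1$.

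\textbf{Main obstacle.} Step 2 as stated modifies $C_1$, while the lemma asserts a metric on the given $C$. The fix I would try is the following. Let $C'$ be the $\theta$-cobordism obtained after surgery, carrying the extended psc metric. Since $C'$ and $C_1$ are $\theta$-bordant rel boundary, the resulting metric on $Y\times\{1\}$ is a psc metric $h$ on $Y$ with $h$ psc-concordant-compatible data. Then apply the Gromov--Lawson bordism principle a second time, now to $C_1$ with the roles reversed: work from $Y\times\{1\}$ with psc metric $h$, or use the fact that $Y\times\{1\}$ bounds the psc piece. Handling this requires that $Y\hookrightarrow C_1$ is $2$-connected, which holds because $C_1\cup C_0=C\simeq Y$ and $\Sigma$ separates. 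Concretely, $\pi_1(Y)\to\pi_1(C)$ is an isomorphism. I expect that checking these connectivity claims in the Mayer--Vietoris and van Kampen setting, in order to cancel the low-index handles relative to $Y\times\{1\}$ (so that the handles from the $\Sigma$ side have index at most $n-3$), is the crux. It is also where $n\ge6$ enters: the Whitney trick is needed for handle cancellation in $C_1$ of dimension at least $6$.
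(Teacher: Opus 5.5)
Your plan depends on the claim that $Y\times\{1\}\hookrightarrow C_1$ is $2$-connected because $C=C_0\cup_\Sigma C_1\simeq Y$, and that claim is false. The retraction $C_1\subset C\to Y$ only makes $\pi_k(Y)\to\pi_k(C_1)$ split injective; surjectivity can fail. For example, push the interior of a small $(n-2)$-disk $D_0\subset Y\times\{\tfrac12\}$ into $Y\times(\tfrac12,1)$ to get a boundary-parallel disk $D$, and let $C_0=Y\times[0,\tfrac12]\cup\nu(D)$. Then $\Sigma\cong Y\#(S^1\times S^{n-2})$. Relative to $Y\times\{1\}$, the region $C_1$ is a collar plus a $1$-handle, so $\pi_1(C_1)\cong\pi_1(Y)*\Z$, with the extra generator the meridian of $D$. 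Relative to $\Sigma$, $C_1$ is a single $(n-1)$-handle. Since $\pi_1(C_1,Y\times\{1\})\neq 0$, \emph{no} handle decomposition of $C_1$ relative to $\Sigma$ has all indices $\le n-3$. So Gajer's theorem cannot push $g_\Sigma$ across the given $C_1$. Your opening reduction, which keeps $\widetilde g=g$ on all of $C_0$, also asks for more than the lemma states or the paper proves.

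There are two smaller problems. In Step 2 the direction is reversed: $2$-connectivity of $\Sigma\hookrightarrow C_1$ gives handles of index $\le n-3$ relative to $Y\times\{1\}$, which pushes metrics from $Y$ to $\Sigma$. In Step 1 the $2$-type of $\Sigma$ is the wrong one, since $\pi_1(\Sigma)\to\pi_1(C_1)$ need not be injective. The $2$-type $\theta$ of $Y$ is the right choice, because it extends to everything via the projection $C\to Y$.

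The missing idea is to \emph{replace} the cobordism and then transfer back on the whole cylinder. The paper takes $W=C_0$ with $\theta$-structure $\ell_Y\circ r$, where $r\colon W\to Y$ is the retraction. It performs interior $\theta$-surgeries on $1$- and $2$-spheres, killing $\ker r_*$ on $\pi_1$ and $\pi_2$. This gives a $\theta$-cobordism rel boundary $W\rightsquigarrow W_2$ with $Y\hookrightarrow W_2$ $2$-connected. Gajer's theorem now does apply to $W_2^{\op}\colon\Sigma\rightsquigarrow Y$, extending $g_\Sigma$ to a psc metric that is a product at both ends. Hence $C_2=W\cup_\Sigma W_2^{\op}$ carries psc, equal to $g$ on $W$ and a product near the far copy of $Y$.

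Finally, $C_2$ is $\theta$-cobordant rel boundary to $W\cup_\Sigma W^{\op}$. Then \cite[Proposition 2.13]{Frenck2026Doubling} transfers psc from $C_2$ to $Y\times[0,1]$, keeping the metric near the boundary. This last step is a relative bordism/surgery principle applied to the whole cylinder, whose structure map $Y\times[0,1]\to B$ is $2$-connected, unlike $C_1\to B$. It changes the metric in the interior, which is why one only gets $\widetilde g=g$ \emph{near} $Y\times\{0\}$. Your ``second application of the bordism principle'' points in this direction, but it must be applied to $C$, not to $C_1$.
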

\begin{proof}
Let $n \ge 6$. Let $W$ be the compact submanifold of $C$ bounded by $Y \times \{0\}$ and $\Sigma$. $W$ can be viewed as a cobordism $W \colon Y \rightsquigarrow \Sigma$. We restrict the projection $C \to Y$ to $W$ and obtain a retraction $r \colon W \to Y$. Let $\theta \colon B \to BO$ be the tangential 2-type of $Y$. Then $\theta$ extends to $W$ by Example \ref{ex: product tangential structure}. Choose the $2$-connected $\theta$-structure
$\ell_Y\colon Y\to B$ defining the tangential $2$-type of $Y$, and write $f_Y=\theta\circ\ell_Y$.
Set $\ell_W=\ell_Y\circ r$ and $f_W=f_Y\circ r$. Let $\nu(Y)$ denote a stable normal bundle of $Y$, with a chosen stable trivialization of $TY\oplus\nu(Y)$. Since $W\subset Y\times[0,1]$, we have $TW\cong r^*TY\oplus\mathbb R$, so pulling back the chosen
trivialization to $W$ gives a stable trivialization of $TW\oplus r^*\nu(Y)
\cong r^*(TY\oplus\nu(Y))\oplus\mathbb R$.
We use the $\theta$-structure induced by this identification.

We first proceed as in \cite[Proposition 6.4]{Rade2023MuBubbles} and form a $\theta$-cobordism from $W$ to some $W_2$ with a metric of positive scalar curvature that is a product near the boundary. Note that the tangential data are already implicitly preserved through the surgery traces in \cite[Proposition 6.4]{Rade2023MuBubbles}; here we just make this explicit. This is done via the following two steps. Since $\ell_Y=\ell_W\circ\iota\colon Y\to B$ is $2$-connected, \((\ell_Y)_*\) is an isomorphism on $\pi_1$ and a surjection on $\pi_2$, and $(\ell_W)_*$ is surjective on both $\pi_1$ and $\pi_2$. 

For each homotopy $H\colon V\times[0,1]\to Y$ used below, the map $f_Y\circ H(\,\cdot\,,0)$ classifies the stable tangent bundle of $V$. Hence $f_Y\circ H$ classifies the stable tangent bundle of $V\times[0,1]$, with strict lift $\ell_Y\circ H$.
Thus in the steps below, whenever we homotope a retraction, we include the corresponding cylinder before the surgery trace. All such homotopies are fixed near the boundary, so these cylinders are $\theta$-cobordisms relative to the boundary.

\emph{Step 1:} We kill $K_1 \coloneq \ker\bigl(r_*\colon\pi_1(W)\to\pi_1(Y)\bigr) = \ker\bigl((\ell_W)_*\colon\pi_1(W)\to\pi_1(B)\bigr)$. This kernel is finitely generated as a normal subgroup of $\pi_1(W)$, since $\pi_1(W)$ is finitely generated and $\pi_1(Y)$ is finitely presented. Thus we can choose a finite number of normal generators.

Let $\alpha$ be one of the chosen normal generators of $K_1$. Since $1 < n/2$ we can represent $\alpha$ by an embedding $\alpha \colon S^1 \hookrightarrow \Int W$. Because $r\circ\alpha$ is null-homotopic in $Y$, we can choose an extension 
\[
F_\alpha\colon D^2\to Y,
\qquad
F_\alpha|_{S^1}=r\circ\alpha.
\]

Since $W$ is oriented the normal bundle of $\alpha$ is trivial. Using
the chosen null-homotopy $F_\alpha$, we can thus choose a framing of $\nu(\alpha, W)$ that is compatible with the stable trivialization of $r^*\nu(Y)\oplus TW$, and choose a corresponding tubular neighborhood $S^1\times D^{n-1}$. We homotope $r$, fixing the central loop and a neighborhood of $\partial W$, so that it is constant in the normal-disk direction on this neighborhood. We transport the stable trivialization through this homotopy and continue to set $\ell_W=\ell_Y\circ r$. 

The surgery trace $T_\alpha\colon W\rightsquigarrow W_\alpha$ is obtained by attaching the $2$-handle $D^2\times D^{n-1}$ to $W\times[0,1]$.
Extend $r$ to a map $r_{T_\alpha}\colon T_\alpha\to Y$ by $r_{T_\alpha} = r \circ \proj_W$ on $W \times [0,1]$, and $r_{T_\alpha}(z,v)=F_\alpha(z)$ on the handle. Set $\ell_{T_\alpha}=\ell_Y\circ r_{T_\alpha}$ and $f_{T_\alpha}=f_Y\circ r_{T_\alpha}$. By the choice of framing, the stable trivialization extends to $r_{T_\alpha}^*\nu(Y)\oplus TT_\alpha$. By our construction, $f_{T_\alpha}$ is a stable tangent classifying map extending $f_W$, and $\theta\circ\ell_{T_\alpha}=f_{T_\alpha}$. Thus $T_\alpha$ is a $\theta$-cobordism.

Repeating this construction for all the normal generators gives a $\theta$-cobordism relative to the boundary $T_1\colon W\rightsquigarrow W_1$ and a retraction $r_{T_1}\colon T_1\to Y$. Let $r_1 \coloneq r_{T_1}|_{W_1} \colon W_1 \to Y$. Set $\ell_{W_1}=\ell_Y\circ r_1$ and $f_{W_1}=f_Y\circ r_1$. Then $\ell_{W_1}$ is a
$\theta$-structure on $W_1$, with
$\theta\circ\ell_{W_1}=f_{W_1}$. The extended stable trivialization on $T_1$ restricts to a stable trivialization of $r_1^*\nu(Y)\oplus TW_1$. The map $r_1$ induces an isomorphism on $\pi_1$ and, being a retraction, is surjective on $\pi_2$. Thus $r_1$ is $2$-connected, and the inclusion $Y\hookrightarrow W_1$ is $1$-connected.

\emph{Step 2:} We further need to kill $K_2 \coloneq \ker \bigl((r_1)_* \colon \pi_2(W_1) \to \pi_2 (Y)\bigr)$. Since $r_1$ is a retraction, the long exact sequence of the pair $(W_1,Y)$ identifies $K_2 \cong \pi_2(W_1,Y).$ Since $(W_1,Y)$ is a finite CW pair and $r_1$ induces an isomorphism on $\pi_0$ and $\pi_1$, $\pi_2(W_1, Y)$ is finitely generated as a $\mathbb Z[\pi_1(Y)]$-module \cite{Wall1965Finiteness}. Since $2 < n/2$ we can represent each of those module generators $\beta$ by an embedding $\beta \colon S^2 \hookrightarrow \Int W_1$. Since $r_1 \circ \beta$ is contractible, there is a map $G_\beta \colon D^3 \to Y$ such that $G_\beta|_{S^2}=r_1\circ \beta$.

The stable trivialization of $r_1^*\nu(Y) \oplus TW_1$ induces a stable trivialization of $\beta^*r_1^*\nu(Y) \oplus \beta^* TW_1$. Since $r_1\circ\beta$ extends over $D^3$, the bundle $\beta^*r_1^*\nu(Y)$ is trivial. Thus $\beta^* TW_1 \cong \nu(S^2,W_1) \oplus TS^2$ is stably trivial.
Since $TS^2$ is stably trivial, it follows that the normal bundle of $\beta$ in $W_1$, $\nu(S^2,W_1)$, is stably trivial as well. Its rank is $n-2 \geq 4$, so it is trivial, and the compatible stable normal framing can be realized by an actual framing. Choose this framing and a tubular neighborhood $S^2\times D^{n-2}$ of $\beta$.

As in Step 1, we homotope $r_1$, fixing the central sphere and a neighborhood of $\partial W_1$, in the attaching neighborhood $S^2\times D^{n-2}$ to be constant in the normal-disk direction. We transport the stable trivialization through this homotopy. Extend $r_1$ over the $3$-handle $D^3\times D^{n-2}$ by $(z,v)\mapsto G_\beta(z)$. By the choice of framing, the stable trivialization extends over the trace. Composing the extended map with
$\ell_Y$ and $f_Y$ therefore gives a strict
$\theta$-structure and its stable tangent classifying map, respectively.

Hence we can kill $K_2$ in finitely many surgery steps. We end up with a $\theta$-cobordism relative to the boundary $T_2\colon W_1 \rightsquigarrow W_2$, such that $W_2$ is obtained from $W_1$ by 2-surgeries in the interior, and a retraction $r_2 \colon W_2 \to Y$ that induces isomorphisms on $\pi_1$ and $\pi_2$. Being a retraction, it is surjective on $\pi_3$. Thus $r_2$ is $3$-connected, and the inclusion $Y\hookrightarrow W_2$ is $2$-connected.

\vspace{1em}

Together $T_1$ and $T_2$ give a $\theta$-cobordism relative to the boundary $T \colon W \rightsquigarrow W_2$.

Now we take $W_2$ with the opposite direction, $W_2^{\op}$, and regard $W_2^{\op}$ as a cobordism from $\Sigma$ to $Y$. Since the inclusion $Y \hookrightarrow W_2^{\op}$ is 2-connected, and $\Sigma$ admits a metric of positive scalar curvature $g_\Sigma$, by Gajer's theorem on the trace of surgery \cite{Gajer1987Riemannian} (see also \cite[Remark 2.12 (iii)]{Frenck2026Doubling}), $W_2^{\op}$ admits a positive scalar curvature metric $g_{W_2^{\op}}$ which extends $g_\Sigma$ on $\Sigma$ and is of product type near both boundaries.

We let $C_2 \coloneq W \cup_{\Sigma} W_2^{\op}$. Then the metric $g$ on $W$ and the metric $g_{W_2^{\op}}$ on $W_2^{\op}$ glue together smoothly to get a positive scalar curvature metric $g_{C_2}$ on $C_2$, such that $g_{C_2} = g$ on $W$ and $g_{C_2}$ is a product near the boundary $Y_+$ coming from $W_2^{\op}$.

Then by gluing together the trivial cobordism over $W$ and the $\theta$-cobordism relative to the boundary $T^{\op} \colon W^{\op} \rightsquigarrow W_2^{\op}$, we get a $\theta$-cobordism relative to the boundary from $W \cup_{\Sigma} W^{\op}$ to $C_2$.

By \cite[Proposition 2.13]{Frenck2026Doubling}, since $C_2$ admits a positive scalar curvature metric $g_{C_2}$, the cylinder $Y \times [0,1]$ admits a positive scalar curvature metric $\widetilde{g}$ such that $g_{C_2}$ and $\widetilde{g}$ agree in a neighborhood of the boundary. This shows $\widetilde{g} = g$ near $Y \times \{0\}$ and $\widetilde{g}$ is a product near $Y \times \{1\}$.
\end{proof}

Now we are ready to give a proof of Theorem \ref{thm: dim 5, 6 and 7}.
\begin{proof}[Proof of Theorem \ref{thm: dim 5, 6 and 7}]
Let $n = 5,6,7$. Suppose that $\Int X$ admits a complete metric of uniformly positive scalar curvature. Then by Theorem \ref{thm: Wang--Zhu} and choosing connected component, $\Int X$ admits an exhaustion by compact connected smooth codimension-zero submanifolds $K_1 \subset K_2 \subset \dots \subset \Int X$ and a complete metric $g$ of uniformly positive scalar curvature such that near each hypersurface $\Sigma_i \coloneq  \partial K_i$, the metric is of product form $g = dt^2 + g_{\Sigma_i}$.

Break $\partial X$ into its connected components as $\partial X=\sqcup_{k=1}^m Y_k$, where each $Y_k$ is connected. For each $Y_k$, fix a collar and a parametrization $C_k\cong  Y_k \times [0,\infty)$. Choose $i<j$ so that $A\coloneq K_j\setminus\Int K_i$ is contained in $\sqcup_{k=1}^m \bigl(Y_k \times (0,\infty)\bigr)$ and contains, in its interior, a slice $Z_k=Y_k \times \{t_k\}$ in each collar.

For each $k$, let $A_k$ be the connected component of $A$ containing $Z_k$. By Example \ref{ex: product tangential structure} we see that the tangential 2-type of $Z_k$ extends to $A_k$. The induced tangential structure on $A_k$ extends to its double $DA_k$ by the standard doubling construction. Since $g$ is a positive scalar curvature metric which is a product near $\partial A_k$, it doubles smoothly to a positive scalar curvature metric on $DA_k$. Applying \cite[Lemma 5.3 (i)]{Frenck2026Doubling} to $Z_k \subset DA_k$ gives a new metric $\widetilde{g}_k$ on $DA_k$ which makes $Z_k$ minimal and agrees with $g$ outside a tubular neighborhood of $Z_k$. We take the tubular neighborhood small enough so it stays in $\Int A_k$. Then $\widetilde{g}_k$ is a metric of positive scalar curvature on $A_k$ that agrees with the original $g$ outside a small tubular neighborhood of $Z_k$ and under which $Z_k$ is minimal. 

Set $X_t
\coloneq \Int X
\setminus \sqcup_{k=1}^m
\bigl(Y_k \times (t_k,\infty)\bigr).
$
This is a compact manifold diffeomorphic to $X$, with
boundary $\sqcup_{k=1}^m Z_k$.
On $X_t$, take the metric $\widetilde g_k$ in the chosen tubular neighborhood of each $Z_k$ and $g$ elsewhere. These metrics fit together smoothly and give a positive scalar curvature metric on $X_t$ with minimal boundary.

Lastly, we suppose in addition that $n=6,7$. For each $k$, $\Sigma_i \cap C_k$ is a separating hypersurface in $Y_k \times [0, t_k]$, so by \cite[Lemma 6.1]{Rade2023MuBubbles} there is a connected component of $\Sigma_i \cap C_k$ that is also separating in $Y_k \times [0, t_k]$. Then we can apply Lemma \ref{lem: psc product} and get that $Y_k \times [0, t_k]$ admits a metric of positive scalar curvature $\hat{g}_k$ that agrees with $g$ near $Y_k \times \{0\}$ and is a product near $Y_k \times \{t_k\}$. Then we just glue $\bigl(X \setminus \sqcup_{k=1}^m\left(Y_k \times (0,\infty) \right), g\bigr)$ with each $(Y_k \times [0, t_k], \hat{g}_k)$ along $Y_k \times \{0\}$. The result is a manifold diffeomorphic to $X$ with a metric of positive scalar curvature that is product near the boundary.
\end{proof}

\subsection{Proof of Theorem \ref{thm: dim 5 homeo}}
\label{subsec: dim 5 homeo}

We first record the following version of the surgery theorem that will be needed:
\begin{proposition}\label{prop: surgery}
Let $n \ge 3$. Let $M^n$ be a closed smooth manifold admitting a positive scalar curvature metric. Suppose $Q\subset M$ is a compact codimension-zero submanifold admitting a handle decomposition with all handles of index at most $n-3$. Then $M\setminus\operatorname{Int}Q$ admits a positive scalar curvature metric that is a product near its boundary.    
\end{proposition}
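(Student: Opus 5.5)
The plan is to reduce the statement to the codimension-$\ge 3$ surgery theorem of Gromov--Lawson and Schoen--Yau by turning the removed region $Q$ around. Let $N \coloneq M\setminus \Int Q$, so $\partial N = \partial Q$. Fix a handle decomposition of $Q$ with handles of index $k\le n-3$. Dually, $Q$ is obtained from a collar $\partial Q\times[0,1]$ by attaching handles of index $n-k\ge 3$. Reading from the inside instead, each handle of index $k$ of $Q$ has a core $D^k$ and a cocore $D^{n-k}$. Then $Q$ collapses, via the handle structure, onto a finite union of cores. This is a spine $P\subset Q$, a finite CW complex (polyhedron) of dimension at most $n-3$, and $Q$ is a regular neighborhood of $P$.

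The key step is to remove $Q$ handle by handle, with the lowest index first, while keeping positive scalar curvature and a product (or at least cylindrical) end structure. I would proceed by induction on the number of handles. Write $Q_j$ for the union of the first $j$ handles, attached in increasing order of index. For $j=1$, $Q_1$ is a $0$-handle, i.e.\ a ball $D^n$. Removing a small geodesic ball from a positive scalar curvature manifold and making the metric a product near the resulting sphere boundary is exactly the Gromov--Lawson ``torpedo'' construction, i.e.\ codimension-$n$ surgery on a point, and $n\ge 3$.

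For the inductive step, $Q_{j+1}=Q_j\cup h$ with $h=D^k\times D^{n-k}$ attached along $S^{k-1}\times D^{n-k}\subset\partial Q_j$. Then
\[
M\setminus\Int Q_{j+1}=(M\setminus \Int Q_j)\setminus \Int h .
\]
Removing $h$ from $M\setminus \Int Q_j$ amounts to performing, on the boundary side, a surgery along the core. More concretely, I would use the Gromov--Lawson construction relative to a submanifold: take the core disk $D^k$, which meets the boundary $\partial Q_j$ in $S^{k-1}$ and has codimension $n-k\ge 3$. Deform the metric in a tubular neighborhood of $D^k$ so that it is a product of a torpedo-type metric on the normal disk bundle with the metric along the core. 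Near the boundary $\partial Q_j$ this deformation should match the existing product structure. The Gromov--Lawson bending argument works for a compact submanifold with boundary that meets $\partial$ orthogonally, as long as the metric is already a product there; this is the relative version used, e.g., by Gajer \cite{Gajer1987Riemannian}. Cutting out the thin tube then leaves a metric that is a product near the new boundary piece $D^k\times S^{n-k-1}$, and it agrees with the old product metric along the remaining boundary after a corner-smoothing.

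I expect the main obstacle to be the corners, where the handle's boundary $D^k\times S^{n-k-1}$ meets the old boundary $\partial Q_j$. Here one must produce a metric that is a genuine product near the smoothed boundary of $M\setminus \Int Q_{j+1}$. My first attempt would be to arrange everything in the Gromov--Lawson normal form: near $S^{k-1}\subset \partial Q_j$ the metric is $g_{\partial}+dt^2$, and the torpedo profile in the normal $D^{n-k}$ factor is chosen independently of $t$. Then the new boundary near the corner is a product of a round-cornered region, and smoothing the corner in the two-dimensional (radial, $t$) plane keeps positive scalar curvature. The reason is that the $S^{n-k-1}$ fiber with $n-k-1\ge 2$ contributes large positive curvature $\sim (n-k-1)(n-k-2)/\delta^2$, which dominates the bounded errors from bending the corner. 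If this direct smoothing is awkward, an alternative is to apply Gajer's theorem to the trace viewpoint: $M\setminus \Int Q$ is obtained from $M\setminus \Int D^n$ by attaching handles of codimension $\ge3$ relative to the boundary, and Gajer produces metrics that are products near both ends of such a trace. I would then reconcile the two boundary descriptions via \cite[Proposition 2.13]{Frenck2026Doubling}.
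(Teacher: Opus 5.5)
Your main route is sound in outline but differs from the paper's. The paper never excises handles from $M$ directly. It first uses Gajer's theorem, starting from torpedo metrics on the $0$-handles, to give $Q$ itself a PSC metric that is a product near $\partial Q$. It then notes that $W=(M\times[0,1])\setminus\operatorname{Int}(Q\times[1/2,1])$ is a cobordism rel $\partial Q$ from $M\sqcup Q^{\mathrm{op}}$ to $M\setminus\operatorname{Int}Q$, in which every $k$-handle of $Q$ becomes a $(k+1)$-handle. So $M\setminus\operatorname{Int}Q$ arises from $M\sqcup Q^{\mathrm{op}}$ by interior surgeries of codimension $n-k\ge 3$, the ordinary Gromov--Lawson theorem applies, and the product structure at the boundary is inherited from the metric on $Q$. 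No corners and no relative surgery lemma are needed.

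Your approach instead needs a relative Gromov--Lawson lemma for excising a neighborhood of a properly embedded core disk $D^k$ that meets a product collar orthogonally. Your sketch of this is essentially right. In the collar, the distance to $D^k$ equals the distance in $\partial V$ to $S^{k-1}$, so the bending is the bending for $S^{k-1}\subset\partial V$ times the collar direction. The corner is then a PSC cross-section $S^{k-1}\times S^{n-k-1}$ times a flat quarter-plane. Two caveats:
\begin{itemize}
\item Flat rounding of the quarter-plane keeps PSC but does not give a product near the rounded boundary. You need a two-dimensional corner profile that is flat near a geodesic boundary. By Gauss--Bonnet such a profile carries total curvature $\pi/2$ in its interior; an example is the tip side of a line in a smoothed cone of angle $3\pi/2$. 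Its curvature is bounded, and can even be taken nonnegative, so it is harmless.
\item Gajer's theorem concerns traces of surgeries and does not literally cover this excision, so you would have to supply the relative argument yourself.
\end{itemize}

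Your fallback does not work as stated. $M\setminus\operatorname{Int}Q$ is obtained from $M\setminus\operatorname{Int}D^n$ by \emph{removing} handles. Seen from $\partial Q$, the reverse trace consists of $(n-k)$-handles, i.e.\ boundary surgeries of codimension $k$, which can be less than $3$. The paper's cobordism $W$ is the correct form of that trace viewpoint.

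In short, your route is a hands-on geometric construction that mirrors the handle structure but has to redo relative surgery with corners. The paper's route gives a short proof built entirely from standard black boxes.
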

\begin{proof}
Equip the $0$-handles of $Q$ with standard positive scalar curvature metrics that are products near their boundaries. Since all remaining handles have index at most $n-3$, Gajer's theorem on the trace of surgery \cite{Gajer1987Riemannian} extends these metrics to a positive scalar curvature metric on $Q$ that is a product near its boundary.

Take $$W=\bigl(M\times[0,1]\bigr)\setminus
\operatorname{Int}\bigl(Q\times[1/2,1]\bigr),$$ 
rounding corners compatibly with the boundary collars. Then $W$ gives a relative cobordism from $M\sqcup Q^{\op}$ to $M\setminus\Int Q$. Relative to the incoming boundary $M\sqcup Q^{\op}$,
the cobordism has a $(k+1)$-handle for each
$k$-handle of $Q$.

We can view $W$ as the trace of interior surgeries from $M\sqcup Q^{\op}$ to $M\setminus\Int Q$. Each $k$-surgery corresponds to removing one $(k+1)$-handle of $Q \times [0,1/2]$. Since $Q$ only consists of handles of index at most $n-3$, all the surgeries are of codimension $\ge 3$. Also note they are in the interior and away from the boundary. Then we can apply the surgery theorem of Gromov--Lawson \cite{GromovLawson1980Classification} and get a metric of positive scalar curvature on $M\setminus\Int Q$. The metric near the boundary is unchanged and is thus still a product near the boundary. 
\end{proof}

We next get some topological control on $\partial X$. 
\begin{lemma}\label{lem: boundary}
Let $(X, \partial X)$ be a compact connected oriented smooth $5$-manifold with non-empty boundary $\partial X$. Assume each component of $\partial X$ is simply connected. Suppose $\Int X$ admits a complete metric of uniformly positive scalar curvature. Let $\Sigma$ be a component of $\partial X$. Then
$$\Sigma \cong_{\mathrm{homeo}} \begin{cases}
\#_k(S^2\times S^2)  & \text{for some $k \ge 0$ if $\Sigma$ is spin,} \\
\#_{p}\CP^2\#_{q}\overline{\CP}^2 & \text{for some $p, q \ge 0$, $p+q >0$ if $\Sigma$ is nonspin.} \\
\end{cases}
$$
When $k = 0$, $\#_0(S^2\times S^2)$ denotes $S^4$.
\end{lemma}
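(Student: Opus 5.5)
The plan is to invoke Freedman's classification of closed simply connected topological $4$-manifolds. For smooth ones it says the homeomorphism type is determined by the intersection form $Q_\Sigma$, since the Kirby--Siebenmann invariant vanishes. So it suffices to show that $Q_\Sigma$ is indefinite, or zero. Once that is known, Serre's classification of indefinite unimodular forms gives the following. If $\Sigma$ is spin, $Q_\Sigma$ is even and hence equal to $a E_8 \oplus k H$. If $\Sigma$ is nonspin, $Q_\Sigma$ is odd and hence diagonal, $p\langle 1\rangle \oplus q\langle -1\rangle$, realized by $\#_p\CP^2\#_q\overline{\CP}^2$. When $Q_\Sigma=0$ we get $\Sigma\simeq S^4$ (spin), giving the $k=0$ case.

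In the spin case we must also rule out $a\neq 0$. We will use the obstruction of Theorem \ref{thm: Wang--Zhu} combined with the Lichnerowicz argument as follows.
\begin{enumerate}
\item Apply Theorem \ref{thm: Wang--Zhu} to $\Int X$ to get product-type hypersurfaces $\Sigma_i=\partial K_i$ with $g|_{\Sigma_i}$ of positive scalar curvature. This uses $R\ge c$ and the product structure: $R_{\Sigma_i}=R_g>0$.
\item Take $i$ large. In the collar $Y\times(0,\infty)$ of the component $Y=\Sigma$ there is a union of components of $\Sigma_i$ separating $Y\times\{0\}$ from infinity. By \cite[Lemma 6.1]{Rade2023MuBubbles} a single connected component $\Sigma'$ already separates.
\item The region between $Y$ and $\Sigma'$ is an oriented cobordism, so $Q$ behaves well. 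More precisely, the degree-one collapse $\Sigma'\to Y\times\{t\}$ obtained by projecting shows that $\Sigma'$ dominates $\Sigma$. The projection $\Sigma'\to Y$ has degree $1$ because $\Sigma'$ is homologous to a slice.
\item Hence $H^2(\Sigma)\to H^2(\Sigma')$ is injective and preserves the cup form. So $Q_\Sigma$ is an orthogonal summand of $Q_{\Sigma'}$, which is unimodular.
\end{enumerate}

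Now $\Sigma'$ carries positive scalar curvature, and we use this to rule out a definite nonzero $Q_\Sigma$. Suppose $Q_\Sigma$ is definite and nonzero, say negative definite after fixing orientation. By Donaldson's theorem it is diagonal, so $\Sigma$ is nonspin and homeomorphic to $\#_q\overline{\CP}^2$. That is already of the stated form with $p=0$.

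So the real content is excluding even forms with $E_8$ summands. In the spin case we would like to use $\sigma(\Sigma)=0$. Since $\Sigma$ is spin and the map $\Sigma'\to\Sigma$ has degree one, $w_2(\Sigma')$ restricted to the summand $H^2(\Sigma)$ vanishes. This does not, however, force $\Sigma'$ to be spin. Instead we use that the oriented bordism class of $\Sigma$ equals that of $\Sigma'$, via the cobordism in the collar, so $\sigma(\Sigma)=\sigma(\Sigma')$.

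If $\Sigma'$ were spin, Lichnerowicz would give $\hat A(\Sigma')=0$, hence $\sigma(\Sigma)=0$. Then $a=0$ by Rokhlin and Serre, giving $\#_k(S^2\times S^2)$. The main obstacle is therefore $\Sigma'$ nonspin. I expect this to be handled by passing to the region $A$ between $Y\times\{0\}$ and $\Sigma'$ and using $\pi_1(Y)=0$: perform interior surgeries of the kind in Lemma \ref{lem: psc product} on $\Sigma'$ inside the collar to make it simply connected and spin-compatible with $Y$. One must check that these surgeries remain available in dimension $5$ for $1$-surgeries, and that positive scalar curvature is preserved in codimension $\ge 3$.

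Alternatively, and this is what I would try first, apply Seiberg--Witten or Lichnerowicz directly in a spin bordism argument. Since $Y$ is spin and simply connected, the cobordism $A$ can be surgered relative to $Y$ so that the spin structure extends. Then $\Sigma'$ is spin-bordant to $Y$ and $\hat A(Y)=\hat A(\Sigma')=0$.
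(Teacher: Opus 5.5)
The nonspin case is fine and matches the paper's argument: the form is odd, Donaldson handles the definite case, Serre the indefinite case, and Freedman applies since the Kirby--Siebenmann invariant vanishes. No curvature input is needed there. The gap is in the spin case, the only place the hypothesis is used. You correctly reduce to $\sigma(\Sigma)=0$, and you correctly get $\sigma(\Sigma)=\sigma(\Sigma')$ from the region between $Y\times\{0\}$ and $\Sigma'$. But you then call ``$\Sigma'$ nonspin'' the main obstacle and leave it to surgeries you only expect to work, so the argument stops short. Remedy (b) also could not help in the case it targets. Surgering $A$ in its interior relative to $Y$ leaves $\Sigma'$ untouched, and a nonspin $\Sigma'$ is not the boundary of any spin cobordism.

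The missing observation is that this obstacle does not exist. $\Sigma'$ is a closed orientable, hence two-sided, hypersurface in $Y\times(0,\infty)$, which is spin because $Y$ is. So $T\Sigma'\oplus\R\cong T(Y\times\R)|_{\Sigma'}$ and $w_2(\Sigma')=0$. Lichnerowicz applied to the product-induced PSC metric on $\Sigma'$ gives $\hat A(\Sigma')=-\sigma(\Sigma')/8=0$. Hence $\sigma(\Sigma)=0$, and Serre plus Freedman give $\#_k(S^2\times S^2)$. Your steps 3--4 (the degree-one map and the cup-form summand) are then unnecessary.

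With this one-line fix your route is correct and genuinely different from the paper's. The paper uses \cite[Proposition 31]{KumarSen2025ExoticStructures} to get a smooth structure on $\Sigma$ itself admitting positive scalar curvature. It then uses \cite[Observation 30]{KumarSen2025ExoticStructures} to conclude $\hat A(\Sigma)=0$. You instead apply Lichnerowicz directly to the PSC hypersurface $\Sigma'$ and carry the signature across the cobordism. This is more elementary and self-contained. It yields only the signature constraint rather than a PSC smooth structure on $\Sigma$, but that is all the lemma needs.
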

\begin{proof}
Suppose that $\Int X$ admits a complete metric of uniformly positive scalar curvature. Let $\Sigma$ be a component of $\partial X$. Then by Theorem \ref{thm: Wang--Zhu}, we can get a separating hypersurface with a metric of positive scalar curvature in a cylinder diffeomorphic to $\Sigma \times [0,1]$. By \cite[Proposition 31]{KumarSen2025ExoticStructures}, there exists at least one smooth structure on $\Sigma$ that admits a positive scalar curvature metric.

If $\Sigma$ is nonspin, then $\Sigma$ has an odd intersection form $Q_\Sigma$. Let $p = b_2^+(\Sigma)$ and $q = b_2^-(\Sigma)$. If $Q_\Sigma$ is definite (i.e. one of $p,q$ is $0$), then since $\Sigma$ is smooth, Donaldson's diagonalization theorem \cite{Donaldson1983GaugeTheory} implies that $Q_\Sigma = p \langle 1 \rangle \oplus q \langle -1 \rangle$. If $Q_\Sigma$ is indefinite (i.e., $p,q >0$), the algebraic classification of indefinite odd unimodular forms also gives $Q_\Sigma = p \langle 1 \rangle \oplus q \langle -1 \rangle$. Consequently, by Freedman's theorem \cite{Freedman1982Topology}, $\Sigma$ is homeomorphic to $\#_{p}\CP^2\#_{q}\overline{\CP}^2$. 

If $\Sigma$ is spin, then by \cite[Observation 30]{KumarSen2025ExoticStructures}, $\hat{A}(\Sigma) = 0$. In dimension 4 $\hat{A}(\Sigma) = - \sigma(\Sigma)/8$. Thus $\Sigma$ is a closed simply connected spin 4-manifold with zero signature, and by Freedman's theorem \cite{Freedman1982Topology}, $\Sigma$ is homeomorphic to $\#_{k} (S^2 \times S^2)$ for some $k \ge 0$. 
\end{proof}

We construct a cap in the totally nonspin case that will be used later in the proof.
\begin{lemma}\label{lem: cap}
Let $\Sigma =\sqcup_{j=1}^{m}\Sigma_j$ be a closed oriented $4$-manifold, where
$$\Sigma_j\cong_{\mathrm{diffeo}} \#_{p_j}{\CP}^{2} \#_{q_j}\overline{\CP}^{2},
$$
and $p_j + q_j >0$.
Suppose $\sigma(\Sigma)=0$. Then $\Sigma$ bounds a compact, connected, simply connected, oriented $5$-manifold $Q$ admitting a positive scalar curvature metric that is a product near the boundary.
\end{lemma}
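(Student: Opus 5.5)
The plan is to build $Q$ by hand from standard pieces and then make it connected by boundary connected sums; positive scalar curvature with product boundary is preserved under each operation. Since $\sigma(\Sigma)=\sum_j (p_j-q_j)=0$, the total number of $\CP^2$ summands equals the total number of $\overline{\CP}^2$ summands. Set $N=\sum_j p_j=\sum_j q_j$.

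\emph{Step 1: the building blocks.} Each $\CP^2$ summand will be paired with a $\overline{\CP}^2$ summand. The basic bounding piece is
\[
\CP^2\#\overline{\CP}^2=\partial\bigl(\text{$D^3$-bundle over $S^2$}\bigr),
\]
namely the nontrivial linear $D^3$-bundle $E\to S^2$, whose sphere bundle is the nontrivial $S^2$-bundle over $S^2$. Then $E$ is simply connected and carries a positive scalar curvature metric that is a product near the boundary: take a connection metric with round $S^2$ base and small fibers. Alternatively, $E$ is $D^5$ with one $2$-handle attached, so Gajer's theorem \cite{Gajer1987Riemannian} applies, since $2\le 5-3$.

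\emph{Step 2: assembling.} Form the boundary connected sum
\[
E_N=\natural_{N} E, \qquad \partial E_N=\#_N(\CP^2\#\overline{\CP}^2)=\#_N\CP^2\#_N\overline{\CP}^2 .
\]
This is simply connected and admits positive scalar curvature with product boundary, because boundary connected sum is a $1$-handle attachment (index $1\le n-3$, again Gajer). This gives one connected boundary, whereas we need the disjoint union $\sqcup_j\Sigma_j$. To split it, I view the desired $Q$ as $E_N$ with further $1$-handles-dual operations. Equivalently, I start from $\sqcup_j\Sigma_j\times[0,1]$, attach $1$-handles to the outer boundary to connect the components, yielding $\#_j\Sigma_j$, and then cap off $\#_j\Sigma_j\cong\#_N\CP^2\#_N\overline{\CP}^2=\partial E_N$ with $E_N$. (If some $\Sigma_j=S^4$ had $p_j+q_j=0$ it would not matter, but here $p_j+q_j>0$ anyway.) Concretely,
\[
Q=\Bigl(\bigl(\textstyle\sqcup_j\Sigma_j\times[0,1]\bigr)\cup(\text{$m-1$ one-handles})\Bigr)\cup_{\#_j\Sigma_j}E_N^{\op}.
\]
$Q$ is connected. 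It is simply connected by van Kampen: the cap $E_N$ kills nothing new, since $\#\Sigma_j$ is simply connected, and the $1$-handles only add free generators, which are killed when gluing. To see the latter cleanly, I note that $Q$ is obtained from $E_N$ by attaching $(m-1)$ $4$-handles dually, and $4$-handles do not affect $\pi_1$ in dimension $5$.

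\emph{Step 3: the metric, which is the main obstacle.} Read from the $E_N$ side, $Q$ is $E_N$ plus $(m-1)$ handles of index $4=n-1$, which are too high for Gajer. Instead I read it from the boundary $\Sigma$: $Q$ is a cobordism from $\Sigma$ to $\emptyset$ built as $\Sigma\times I$, then $1$-handles, then the handles of $E_N^{\op}$ turned upside down. The latter have indices $5-2=3$ and $5$, so the index $3$ handle exceeds $n-3=2$, and this route fails too. The cleaner route is Proposition \ref{prop: surgery}. Take a closed positive scalar curvature manifold $M$ containing $Q$'s complement. I realize $Q=M\setminus\Int P$ with $M=\#_N(S^2\tilde\times S^3)$ (the double of $E_N$, which has positive scalar curvature), and with $P$ built from $E_N^{\op}$ minus $1$-handles, a tree of pieces each having handles of index $\le 2$. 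Verifying that the complement $P$ has a handle decomposition with indices $\le 2$ is the point to check carefully. If it fails, the fallback is to work summand by summand: write each $\Sigma_j$ as a boundary of a plumbing and pair the $\CP^2$'s across components via tubes $S^3\times I$ carrying positive scalar curvature.
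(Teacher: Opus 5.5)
There is a genuine gap: Step~3 never produces the positive scalar curvature metric, and the route you propose there cannot work.

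Your topological construction is fine. $Q=E_N\cup(m-1)$ four-handles is connected, simply connected, and has boundary $\Sigma$. (Incidentally, $m-1$ one-handles joining $m$ simply connected pieces along a tree add no free generators, so nothing needs to be killed.) Your diagnoses of the two direct readings are also right: from $E_N$ you need $4$-handles, and from $\Sigma$ you need $3$-handles.

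The problem is the route via Proposition~\ref{prop: surgery}. Suppose $Q=M\setminus\Int P$ with $P$ built from handles of index $\le 2$.
\begin{enumerate}
\item Turning the decomposition upside down, $P$ is obtained from $\partial P\times[0,1]$ by attaching handles of index $\ge 3$.
\item Hence $\pi_0(\partial P)\to\pi_0(P)$ is a bijection, so every component of $P$ has connected boundary.
\item Since $\partial P=\partial Q=\Sigma$, each $\Sigma_j$ would bound a compact oriented $5$-manifold on its own. This forces $\sigma(\Sigma_j)=p_j-q_j=0$ for every $j$.
\end{enumerate}
This fails exactly where the lemma is needed, e.g.\ $\Sigma=\CP^2\sqcup\overline{\CP}^2$, where only the total signature vanishes. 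Your fallback has the same defect: a $\Sigma_j$ with $p_j\neq q_j$ is not the boundary of a plumbing, or of anything else. The same count shows that any handle decomposition of a connected $Q$ with $m\ge 2$ boundary components, built up from $\emptyset$, must contain $4$-handles. So no rearrangement of pieces built from scratch by low-index handles, like $E$ and $E_N$, will let Gajer's theorem apply.

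The missing idea is to start from building blocks whose boundary is already disconnected and whose product positive scalar curvature metric comes for free. Each block should pair a $\CP^2$ with a $\overline{\CP}^2$ on different boundary components, so you never have to split a boundary. The paper proceeds as follows:
\begin{enumerate}
\item Take $N=\sum_j p_j$ copies of $\CP^2\times[0,1]$ with the Fubini--Study metric times $dt^2$; each has boundary $\CP^2\sqcup\overline{\CP}^2$.
\item Group these $2N$ boundary components into the prescribed $\Sigma_j$ by boundary connected sums. These are $5$-dimensional $1$-handles, i.e.\ codimension-$4$ surgeries on the boundary, so Gajer's theorem \cite{Gajer1987Riemannian} applies.
\item Make the result connected and simply connected by interior connected sums and interior $1$-surgeries. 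These have codimension $\ge 4$ and preserve positive scalar curvature by Gromov--Lawson \cite{GromovLawson1980Classification}, without touching the product collar.
\end{enumerate}
Your blocks $E$ and $E_N$ are correct as stated, but their connected boundary $\CP^2\#\overline{\CP}^2$ forces exactly the codimension-one splitting that no surgery theorem controls.
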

\begin{proof}
Since $0 = \sigma(\Sigma)=\sum_j p_j - \sum_j q_j$, let $P\coloneq  \sum_j p_j$. Take $P$ copies of the cylinder ${\CP}^{2}\times[0,1]$, equipped with the product of the Fubini--Study metric and $dt^2$. The boundary of each cylinder gives one copy of $\CP^2$ and one copy of $\overline{\CP}^2$. 

Perform a boundary connected sum of these cylinders by attaching $5$-dimensional $1$-handles along boundary $4$-balls of these cylinders to assemble the boundary into the prescribed components $\Sigma_j$. These handle attachments realize a sequence of codimension-$4$ surgeries on the boundary, so we can extend the positive scalar curvature metric across the handles while retaining a product structure near the new boundary by Gajer's theorem \cite{Gajer1987Riemannian}.
The resulting $5$-manifold $Q_0$ obtained by attaching the 1-handles to the cylinders thus has $\Sigma$ as its boundary, and admits a metric of positive scalar curvature that is a product near the boundary. 

We perform interior connected sums on $Q_0$ to get a connected manifold, and then kill the fundamental group by interior 1-surgeries on embedded circles representing the generators. This results in a connected, simply connected 5-manifold $Q$ with the same boundary $\Sigma$. $Q$ is obtained from $Q_0$ by $0$- and $1$-surgeries, so the surgery theorem of Gromov--Lawson \cite{GromovLawson1980Classification} implies that $Q$ still admits a metric of positive scalar curvature that is product near the boundary.
\end{proof}

We also record an elementary construction for later use. 
\begin{lemma}\label{lem: star}
Let $W$ be a compact connected manifold with $m \ge 1$ boundary components. Write
$$\partial W=\Sigma_1\sqcup \Sigma_2\sqcup\cdots\sqcup \Sigma_m,
$$
and distinguish $\Sigma_1$ as the root boundary component. Let $S$ denote the star graph on $m$ vertices $\{v_1, \dots, v_m\}$ with center at $v_1$. Then there exists a continuous map $f:W\to S$ such that some collar neighborhood of $\Sigma_j$ is mapped constantly to $v_j$ for each $j$.   
\end{lemma}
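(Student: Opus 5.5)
The plan is to build $f$ by collapsing each collar onto an edge of $S$ and sending the rest of $W$ to the center. We realize $S$ geometrically as the cone on the $m-1$ points $v_2,\dots,v_m$ with cone point $v_1$. Concretely, $S$ is the union of $m-1$ segments $e_j\cong[0,1]$ for $j=2,\dots,m$, glued at their $0$-endpoints, which form the center $v_1$; the endpoint $1$ of $e_j$ is $v_j$. If $m=1$, then $S$ is a single point and the constant map works, so we assume $m\ge 2$.

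First, we choose pairwise disjoint closed collars $c_j\colon \Sigma_j\times[0,2]\hookrightarrow W$ with $c_j(x,0)=x$, using the collar neighborhood theorem. We take them disjoint because the $\Sigma_j$ are disjoint closed subsets of the compact manifold $W$. Next, we fix a continuous function $\rho\colon[0,2]\to[0,1]$ with $\rho\equiv 1$ on $[0,1/2]$ and $\rho\equiv 0$ on $[3/2,2]$. Then we define $f$ as follows:
\begin{itemize}
\item on the collar of $\Sigma_j$ with $j\ge 2$, set $f(c_j(x,t))=\rho(t)\in e_j$;
\item on the collar of $\Sigma_1$, set $f\equiv v_1$;
\item on the complement of the union of the open collars $c_j(\Sigma_j\times[0,2))$, set $f\equiv v_1$.
\end{itemize}

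Continuity follows from the pasting lemma. The pieces are finitely many closed sets covering $W$. They overlap only along the sets $c_j(\Sigma_j\times\{2\})$, and on these both definitions give $v_1$, since $\rho(2)=0$. Finally, on the smaller collar $c_j(\Sigma_j\times[0,1/2))$ the map is constantly $\rho=1$, that is, equal to $v_j$ for $j\ge2$, and it is constantly $v_1$ near $\Sigma_1$. This is exactly the required property.

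There is no real obstacle. The only point needing care is arranging the collars to be disjoint and closed, so that the pasting lemma applies. This is standard, since one can shrink a single collar of $\partial W=\bigsqcup_j\Sigma_j$ and restrict it to each component. Connectedness of $W$ is not even needed for this construction.
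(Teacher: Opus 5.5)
Your proof is correct and follows essentially the same approach as the paper: choose disjoint collars $\Sigma_j\times[0,2]$, keep the collar of $\Sigma_j$ at $v_j$ near $\Sigma_j$, run it along the edge $e_j$ to the center $v_1$ by the far end of the collar, and send everything else to $v_1$. The only difference is cosmetic: you use a cutoff $\rho$ where the paper uses a piecewise parametrization $\gamma_j(t-1)$ on $[1,2]$.
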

\begin{proof}
Choose pairwise disjoint collar neighborhoods $N_j \cong \Sigma_j\times[0,2]$ for each boundary component $\Sigma_j$.
For each $j\ge 2$, let $e_j\subset S$ denote the edge joining $v_j$ to the central vertex $v_1$, and choose a parametrization $\gamma_j:[0,1]\to e_j$
with $\gamma_j(0)=v_j$, $\gamma_j(1)=v_1$.

For each $j\ge2$, define $f$ on $\Sigma_j\times[0,2]$ by
$$f(x,t)=
\begin{cases}
v_j, & 0\le t\le 1 \\
\gamma_j(t-1), & 1\le t\le2.
\end{cases}
$$
Define $f\equiv v_1$ on the complement of these $m-1$ collars. $f$ is a continuous map that has the required collar restrictions.
    
\end{proof}

We are now ready to prove Theorem \ref{thm: dim 5 homeo}.
\begin{proof}[Proof of Theorem \ref{thm: dim 5 homeo}]
Take $X$ as in the statement. Then by Theorem \ref{thm: dim 5, 6 and 7},  $X$ admits a metric $g$ of positive scalar curvature with minimal boundary. 
Let $Y\coloneq  \partial X$, and let $m$ be the number of components of $Y$. Let the components of $Y$ be $Y_1, \dots, Y_m$. By Lemma \ref{lem: boundary}, each $Y_j$ is homeomorphic to either $\#_{k_j} (S^2 \times S^2)$ or $\#_{p_j}\CP^2\#_{q_j}\overline{\CP}^2$.
Let $Y_j'$ be the standard $\#_{k_j} (S^2 \times S^2)$ or $\#_{p_j}\CP^2\#_{q_j}\overline{\CP}^2$ homeomorphic to $Y_j$.

Since $Y_j$ and $Y_j'$ are simply connected and homeomorphic, there exists a smooth $h$-cobordism $C_j\colon Y_j\rightsquigarrow Y_j'$ \cite[Theorem 2]{Wall1964SimplyConnected}. Then the topological $h$-cobordism theorem proven by Freedman \cite[Theorem 1.3]{Freedman1982Topology} implies $C_j$ is homeomorphic to a product $Y_j \times [0,1]$. 

Let $C\coloneq  \sqcup_j C_j$. 
Take $X'\coloneq  X \cup_{Y} C$. Then $X'$ is homeomorphic to $X$. Our goal is to show $X'$ admits a metric of positive scalar curvature which is a product near the boundary. To do this, we will cap it off by suitable caps to get a closed 5-manifold $N$, and show it has a metric of positive scalar curvature using Theorem \ref{thm: psc bordism}. The caps will be nice enough so that the surgery theorem allows us to remove the caps and get the desired metric on $X'$.

If $Y_j$ is spin, then $Y_j' = \#_{k_j} (S^2 \times S^2)$, and we can take $(Y_j')^{\op}$ as the boundary of a boundary connected sum $Q_j\coloneq  \natural_{k_j}(D^3\times S^2)$. Here $\natural_0(D^3\times S^2)=D^5$. $Q_j$ is spin since both $D^3$ and $S^2$ are, and boundary connected sum preserves spin structures. In its handle decomposition, $Q_j$ consists of a single $0$-handle together with $k_j$ handles of index $2$. 
Equip the $0$-handle with the standard positive scalar curvature metric that is a product near the boundary. Gajer's theorem \cite{Gajer1987Riemannian} then extends this metric to a positive scalar curvature metric on $Q$ that is a product near its boundary.

If $Y_j$ is nonspin, then $Y_j' = \#_{p_j}\CP^2\#_{q_j}\overline{\CP}^2$. 
Its signature is $\sigma(Y_j') = p_j - q_j$. Thus $Y_j'$ does not bound unless $p_j = q_j$. However, since $Y$ bounds $X$, $0 = \sigma (Y) = \sum \sigma(Y_j) = \sum \sigma(Y_j')$. Since the signature of $\#_{k} (S^2 \times S^2)$ is zero, the sum of signatures of those nonspin $Y_j'$ is zero. 

Take the double $DX = X \cup_Y X^{\op}$. Since $(X,g)$ has positive scalar curvature with minimal boundary, $DX$ admits a metric of positive scalar curvature \cite[Theorem 5.7]{GromovLawson1980Spin}. 

We now proceed depending on whether $X$ is spin or totally nonspin.

\emph{Case 1:} Assume $X$ is spin. Then $\partial X$ and $DX$ are spin as well. Let $U_j \coloneq  C_j \cup Q_j$. The double $DU_j$ is also spin. 
We consider the closed manifold
$$N\coloneq  X \cup_{Y_j} U_j =X' \cup_{Y_j'} Q_j$$
where the union is taken over all $j$'s. $N$ is a spin manifold by our construction. See Figure \ref{fig: spin} for a visualization. 

\begin{figure}[ht]
    \centering
    \includegraphics[width= 0.9\linewidth]{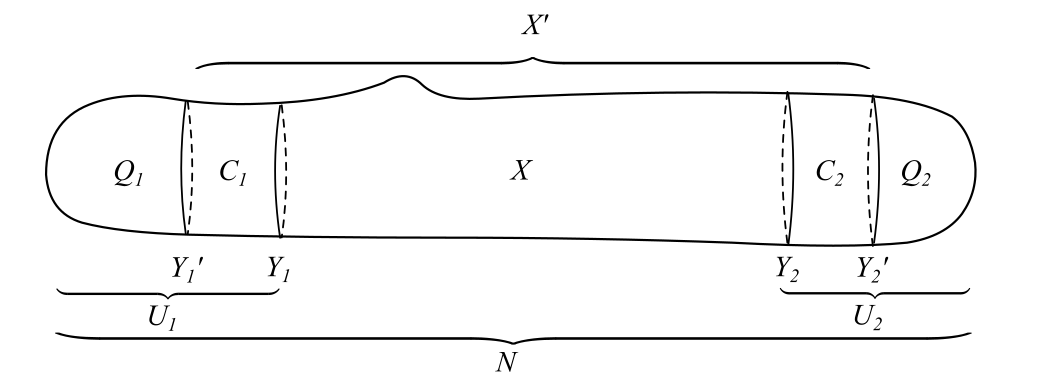}
    \caption{The manifold $N$ in the spin case.}
    \label{fig: spin}
\end{figure}

Since each $Y_j'$ and each $Q_j$ is simply connected, by van Kampen's theorem, $\pi_1(N) = \pi_1 (X) =: \pi$.

Take a classifying map $u_0 \colon N \to B\pi$. Since each $C_j \cup_{Y_j'} Q_j$ is simply connected, $u_0 |_{C_j \cup_{Y_j'} Q_j}$ is null-homotopic. By the homotopy extension property, we can fix a point $b \in B\pi$ and homotope $u_0$ to another classifying map $u \colon N \to B\pi$ such that for each $j$, $u|_{C_j \cup_{Y_j'} Q_j}$ is the constant map to $b$. We denote by $b$ the constant map to $b$.
Then we can extend $u|_X$ to a map $\hat{u} \colon DX \to B \pi$ by setting $\hat{u} = u$ on $X$ and extending by the constant map $\hat{u} \equiv b$ on $X^{\op}$.

Then we get the following bordism equivalence in $\Omega_5^{Spin}(B\pi)$:
$$[N, u] + [N^{\op}, b] = [DX, \hat{u}] + \sum_{j=1}^m [DU_j, b].$$

Since $\Omega_5^{Spin}(*) = 0$ \cite{Milnor1963Spin}, we have $[N^{\op}, b] = 0 = [DU_j, b]$, so 
$$[N, u] = [DX, \hat{u}] \in \Omega_5^{Spin}(B\pi).$$
Since $DX$ admits a metric of positive scalar curvature, Theorem \ref{thm: psc bordism} (1) implies that $N$ admits a metric of positive scalar curvature. 

Since each $Q_j$ consists of only $0$ and $2$-handles, by Proposition \ref{prop: surgery}, $X' = N \setminus \sqcup_j \Int Q_j$ admits a metric of positive scalar curvature that is a product near the boundary as desired.
\vspace{1em}

\emph{Case 2:} Assume $X$ is totally nonspin. Let $I$ denote the set of indices $j$ where $Y_j'$'s are spin, and let $J$ denote the set of indices $j$ where $Y_j'$'s are nonspin.

For each $j \in I$, we still cap off $Y_j'$ by $Q_j$. Define $U_j \coloneq  C_j \cup Q_j$.

For the nonspin components, $\sqcup_{j \in J} Y_j'$ has signature zero, so $\sqcup_{j \in J} (Y_j')^{\op}$ bounds a compact 5-manifold $\hat{Q}$ as constructed in Lemma \ref{lem: cap}. If $J = \emptyset$ then we just take $\hat{Q} =\emptyset$. When $J\neq\emptyset$, $\hat{Q}$ is connected, simply connected, and admits a metric of positive scalar curvature that is product near the boundary. Define $\hat U\coloneq  \cup_{j \in J} C_j \cup_{Y_j'} \hat{Q}$.

We consider the closed manifold
$$N\coloneq  X \cup_{Y_j, j \in I} U_j \cup_{Y_j, j \in J} \hat{U} = X' \cup_{Y_j', j \in I} Q_j \cup_{Y_j', j \in J} \hat{Q}.$$
See Figure \ref{fig: totally nonspin} for a visualization. Because $X$ is totally nonspin, $N$ is also totally nonspin by our construction.

\begin{figure}[ht]
    \centering
    \includegraphics[width= 0.9\linewidth]{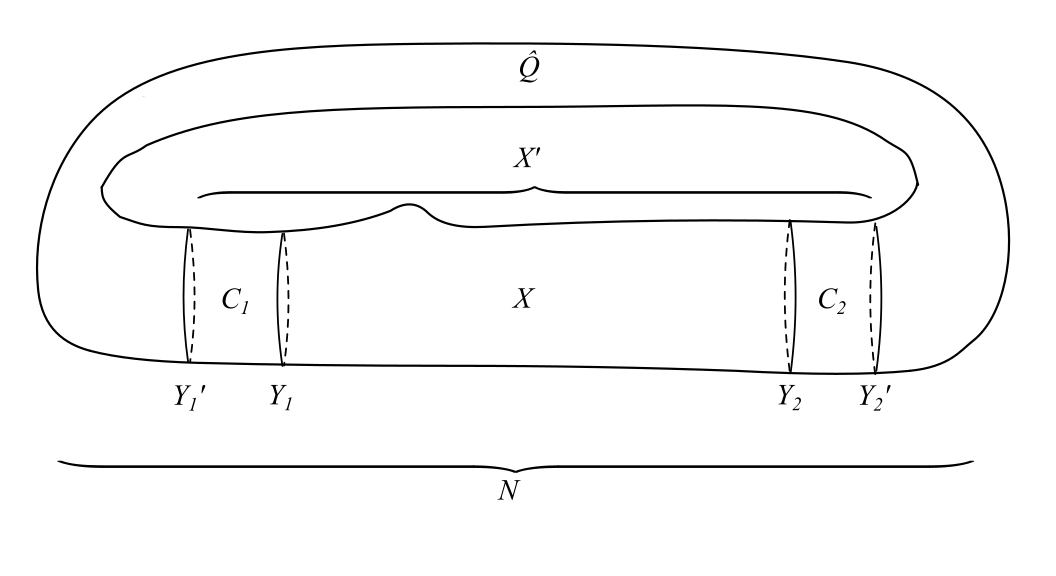}
    \caption{The manifold $N$ in the totally nonspin case. In the case drawn, $|I| = 0$ and $|J|= 2$.}
    \label{fig: totally nonspin}
\end{figure}

Denote $\pi\coloneq  \pi_1 (X) = \pi_1(X')$.
By van Kampen's theorem, since each $Q_j$ and each $Y_j$ for $j \in I$ are connected and simply connected, attaching them to $X'$ does not change the fundamental group. 

We first assume $J \neq \emptyset$. $\hat{Q}$ is connected and simply connected, but it is attached to $X'$ over $|J|$ simply connected components, so by van Kampen's theorem for fundamental groupoids,
$$\pi_1(N) = \pi_1(X') * F_{|J|-1} = \pi * F_{|J|-1} \coloneq \Gamma,$$
where $F_{|J|-1}$ is the free group on $|J|-1$ elements.

Notice that 
$$B\Gamma\simeq B\pi\vee\bigvee^{|J|-1}S^1.$$

Let $b$ denote the wedge point where all the wedge sums take place. Let $Bi \colon B\pi \to B\Gamma$ be the inclusion map, and $Br: B\Gamma \to B\pi$ be the map where all $S^1$'s are collapsed to the wedge point.  $Br \circ Bi =\id$. For simplicity of notation, we identify $\Omega_5^{SO}(B\pi)$
with its image in $\Omega_5^{SO}(B\Gamma)$ under $(Bi)_*$. Similarly we identify $\Omega_5^{SO}(*)$ with its image in $\Omega_5^{SO}(B\pi)$ and $\Omega_5^{SO}(B\Gamma)$ by mapping $*$ to $b$.

We can compute 
\begin{align*}
\Omega_5^{SO}(B\Gamma) & \cong\Omega^{SO}_5 (*) \oplus \widetilde \Omega_5^{SO}(B\Gamma) & \text{ by \eqref{eq: reduced bordism}}\\
& \cong \Omega^{SO}_5 (*) \oplus \widetilde \Omega_5^{SO}(B\pi) \oplus (\bigoplus^{|J|-1} \widetilde \Omega_5^{SO}(S^1))  & \text{ by \eqref{eq: additivity}}\\
& \cong  \Omega_5^{SO}(B\pi) \oplus (\bigoplus^{|J|-1} \widetilde \Omega_5^{SO}(S^1))  & \text{ by \eqref{eq: reduced bordism}.}
\end{align*}
Recall from \eqref{eq: bordism S^1} that $\widetilde\Omega_5^{SO}(S^1) \cong \Z$ and $[\CP^2 \times S^1, \operatorname{pr}_{S^1}]$ generates the group $\widetilde\Omega^{SO}_5(S^1)$. 

$(Br)_* \colon \Omega_5^{SO}(B\Gamma) \to \Omega_5^{SO}(B\pi)$ is projection onto the first summand of the above decomposition. Then the kernel of $(Br)_*$ is generated by
\begin{align}\label{eq: P_i}
P_i=
[\CP^2\times S^1,\,
\ell_i\circ\operatorname{pr}_{S^1}],    
\end{align}
where \(\ell_i:S^1\to B\Gamma\) represents the \(i\)-th free generator. The standard product metric of $\CP^2\times S^1$ has positive scalar curvature. 

Take a classifying map $u_0\colon X \to B\pi$. Similar to the spin case, since the boundary components are simply connected, we can take $u_0$ to equal the constant $b$ on the boundary collars. Extend it constantly to get a map 
$u: N \to B\pi$. 

On the other hand, we can also build a classifying map $c\colon N \to B\Gamma$ from $u_0$. To get $c$ from $u_0$, we still extend by constant $b$ to all the $Q_j$'s for $j \in I$. To extend to $\hat Q$, fix one $j \in J$ as the center of the star, by Lemma \ref{lem: star}, we can construct a continuous map from $\hat U$ to the star graph on $|J|$ vertices. Gluing all the vertices together yields a continuous map from $\hat{U}$ to $\bigvee_{|J|-1}S^1$. Gluing this map with the map $u_0: X \to B\pi$ yields the map $c: N \to B\Gamma$. Under the identification $\pi_1(N)\cong\pi*F_{|J|-1}$, the induced map $c_*$ sends the subgroup $\pi_1(X)\cong\pi$ isomorphically onto the $\pi$-factor and the additional free generators to the corresponding circle generators in $B\Gamma$, so $c$ is a classifying map. By our construction, $[N, u] = (Br)_* [N, c] \in \Omega_5^{SO}(B\pi)$. Our goal is to show $[N,c] \in \Omega_5^{SO,+}(B\Gamma)$.

We also extend $u_0$ to a map $\hat{u} \colon DX \to B \pi$ by letting $\hat{u} = u$ on $X$ and extend by the constant map $\hat{u} \equiv b$ on $X^{\op}$.

We get the bordism equivalence in $\Omega_5^{SO}(B\pi)$:
$$[N, u] + [N^{\op}, b] = [DX, \hat{u}] + \sum_{j \in I} [DU_j, b] + [D\hat{U}, b].$$

Now since $D\hat{U}$ bounds $\hat{U} \times [0,1]$ after rounding corners and the constant map $b$ extends over $\hat{U} \times [0,1]$, we have $[D\hat{U}, b] = 0$. Similarly $[DU_j, b] = 0$.
Thus
$$[N, u] - [DX, \hat{u}] = [N, b].$$

Since $b$ is the constant map, $[N, b] \in \Omega_5^{SO}(*)$. As shown by \cite[Theorem 3]{GromovLawson1980Classification}, every class in $\Omega_5^{SO}(*)$ has a simply connected representative carrying a metric of positive scalar curvature. Let $(M, b)$ be such a PSC representative in the class of $[N, b]$. 

Thus
$$[N, u] = [DX, \hat{u}] + [M, b] \in \Omega_5^{SO}(B\pi).$$

To show $[N,c] \in \Omega_5^{SO}(B\Gamma)$ has a PSC representative, we consider the class $[N,c] - [N, u] \in \Omega_5^{SO}(B\Gamma)$. Since  $[N, u] = (Br)_* [N, c] \in \Omega_5^{SO}(B\pi)$, we find $[N,c] - [N, u] \in \ker((Br)_* \colon \Omega_5^{SO}(B\Gamma) \to \Omega_5^{SO}(B\pi))$.

Since $\ker(Br)_*$ is generated by the $P_i$'s as defined in \eqref{eq: P_i}, we can find $a_i \in \Z$ such that 
$$[N,c] - [N, u] = \sum_{i=1}^{|J|-1}a_i P_i \in \Omega_5^{SO}(B\Gamma).$$

Therefore we have in $\Omega_5^{SO}(B\Gamma)$ that
$$[N,c] = [N, u] + \sum_{i=1}^{|J|-1}a_i P_i = [DX, \hat u] + [M,b] + \sum_{i=1}^{|J|-1}a_i P_i.$$

By our choice, each representative on the right-hand side has a metric of positive scalar curvature, so the class $[N,c]$ has a PSC representative. Since $N$ is totally nonspin, by Theorem \ref{thm: psc bordism} (2), $N$ admits a metric of positive scalar curvature.

The above argument works when $J \neq \emptyset$. If $J=\emptyset$, then we simply have $\pi_1(N) = \pi_1(X') = \pi$. The same argument as above applies with $\hat Q$ omitted, $\Gamma=\pi$, and $c=u$, and we get the bordism equivalence $[N,c] = [DX, \hat u] + [M,b]$ in $\Omega_5^{SO}(B\Gamma)$, which again shows $N$ admits a metric of positive scalar curvature by Theorem \ref{thm: psc bordism} (2).

Lastly, we show that $X'$ admits the desired metric of positive scalar curvature that is a product near the boundary. Let $Q\coloneq  \cup_{j \in I}  Q_j \cup \hat{Q}$. Then $N = X' \cup Q$. So far, we have shown that $N$ has a metric of  positive scalar curvature, and $Q$ admits a metric of positive scalar curvature that is a product near the boundary.

There is an oriented bordism from $N\sqcup Q^{\op}$ to $X'$ relative to their identified boundary. Explicitly, take
\[
W= N \times [0,1]\setminus \operatorname{Int}\bigl(Q\times[1/2,1]\bigr)
\]
rounding corners compatibly with the boundary collars.
Its incoming boundary is $(N\times\{0\})\sqcup(Q^{\op}\times\{\frac12\})$, its outgoing boundary is $X'\times\{1\}$, and its vertical
boundary is $\partial X'\times[\frac12,1]$.

The map $u\circ\operatorname{pr}_N\colon W\to B\pi$ restricts to $u$ on $N$, to the constant map $b$ on $Q^{\op}$ and the vertical boundary, and to $u|_{X'}$ on $X'$. By construction, $u|_{X'}$ classifies the universal cover of $X'$. Thus $W$ is a bordism over $B\pi$ relative to the boundary.

Since $X'$ is totally nonspin and has dimension $5$, by the version of the surgery theorem for manifolds with boundary in \cite[Lemma 2.6]{Frenck2023Nonspin} (cf. \cite[Theorem 1.5]{EbertFrenck2021Surgery} and \cite[Remark 2.12(i)]{Frenck2026Doubling}), the manifold $X'$ admits a metric of positive scalar curvature which restricts to the same metric near the boundary as $N\sqcup Q^{\op}$. Thus $X'$ admits a metric of positive scalar curvature that is a product near the boundary as desired.

\end{proof}

\bibliographystyle{alpha}
\bibliography{main}
\end{document}

%% file: conditions.tex
\begin{tikzpicture}[
  node distance=2.2cm and 2.8cm,
  every node/.style={align=center}
]
\node (prod) {
  (1) PSC on $X^n$,\\
  product near $\partial X$
};

\node (min) [right=of prod] {
  (2) PSC on $X^n$,\\
  $\partial X$ minimal
};

\node (upsc) [below=of prod] {
  (3) complete UPSC\\
  on $\operatorname{Int}X$
};

\node (psc) [right=of upsc] {
  (4) complete PSC\\
  on $\operatorname{Int}X$
};

\draw[implication] (prod) -- (min);
\draw[implication, bend left=12] (prod) to (upsc);
\draw[implication, dashed, bend left=12]
  (upsc) to node[left,font=\footnotesize] 
  {$n=3$: true (Gromov, Wang) \\
  $n=5$: not in general, \\
  some cases upto homeo. \\
  (Theorem \ref{thm: dim 5 homeo}) \\
  $n= 6, 7$: true (Theorem \ref{thm: dim 5, 6 and 7})
  } (prod);
\draw[implication] (upsc) -- (psc);
\draw[implication, dashed]
  (upsc) to node[midway, above, font=\footnotesize] {$n=5$: true (Theorem \ref{thm: dim 5, 6 and 7})} (min);


\end{tikzpicture}